\theoremstyle{plain}
\newtheorem{theorem}{Theorem}[section]
\newtheorem{proposition}[theorem]{Proposition}
\newtheorem{lemma}[theorem]{Lemma}
\newtheorem{corollary}[theorem]{Corollary}
\theoremstyle{definition}
\newtheorem{definition}[theorem]{Definition}
\renewenvironment{proof}[1][\proofname]{\par
  \normalfont
  \topsep6\p@\@plus6\p@ \trivlist
  \item[\hskip\labelsep{\bfseries #1}\@addpunct{\bfseries.}]\ignorespaces
}{%
  \endtrivlist
}
\renewcommand{\proofname}{proof}
\theoremstyle{remark}
\newtheorem{remark}[theorem]{Remark}
\numberwithin{equation}{section}
\title{Algebraic $K$-theory and algebraic cobordism of almost mathematics}
\date{\today} 
\author{Yuki Kato}
\address{National institute of technology, Ube college, 
	      2-14-1, Tokiwadai, Ube, Yamaguchi, JAPAN 755-8555.}
\email{ykato@ube-k.ac.jp}
\keywords{$\A^1$-homotopy theory, almost mathematics, perfectoid algebra}
\newcommand{\A}{\mathbb{A}}
\newcommand{\Spec}{\mathrm{Spec}}
\newcommand{\Hom}{\mathrm{Hom}}
\newcommand{\sSet}{\mathrm{Set}_{\Delta}}
\newcommand{\Cat}[1]{\mathrm{Cat}_{#1}}
\newcommand{\Map}{\mathrm{Map}}
\newcommand{\Alg}{\mathrm{Alg}}
\newcommand{\Mod}{\mathrm{Mod}}
\newcommand{\AMod}{\mathrm{alMod}}
\newcommand{\CAlg}{\mathrm{CAlg}}
\newcommand{\tm}{\tilde{\mathfrak{m}}}
\newcommand{\Fun}{\mathrm{Fun}}
\newcommand{\FMod}{\mathrm{Mod}^{\rm fi}}
\newcommand{\CMod}{\mathrm{Mod}^{\rm cl}}
\newcommand{\APMod}{\mathrm{alPMod}}
\newcommand{\AMGL}{\mathrm{alMGL}}
\newcommand{\alFSyn}{\mathrm{alFSyn}}
\newcommand{\Tor}{\operatorname{Tor}}
\newcommand{\Ext}{\operatorname{Ext}}
\newcommand{\Sch}{\mathbf{Sch}}
\newcommand{\MS}{\mathbf{MS}}
\newcommand{\MSp}{\mathrm{MSp}}
\begin{document}
\begin{abstract}
Faltings~\cite{Faltings}; Gabber and Ramero~\cite{GR} introduced almost
mathematics. In another way, almost mathematics can be characterized
bilocalization abelian category of modules mentioned in Quillen's
unpublished note~\cite{Q}. Applying the concept of Quillen's
bilocalization to Gabber and Ramero's work, this paper establishes the
almost version of algebraic $K$-theory and cobordism. As a result of
almost $K$-theory, we prove that, in the case an almost algebra
containing a field, the almost $K$-theory of the almost algebra is a
direct factor of the $K$-theory of the field, implying that almost
$K$-theory holds the Gersten property. We clarify that an almost $K$-theory
is a $K$-theory spectrum of non-unital firm algebras in the sense of
Quillen~\cite{Q}.  Furthermore, we obtain that almost algebraic cobordism holds tilting equivalence on the category of zero-section stable integral perfectoid algebras with finite syntomic topology.
\end{abstract}
\maketitle
\section{Introduction}
\label{sec:introduction} Faltings~\cite{Faltings} first introduced
almost mathematics, proving almost purity.  More concisely, Gabber and
Ramero established~\cite{GR} almost ring theory in their
textbook:theories of almost modules, almost algebras, and almost
homotopical algebra.  While almost mathematics has various applications
to arithmetic geometry, for example, Scholze's work perfectoid
geometry~\cite{sh12}, Quillen~\cite{Q} mentioned linear algebra over
non-unital rings which is the same as almost mathematics.  Quillen's
work is more conceptional in the sense of using categorical language: in
his work, almost mathematics is characterized as bilocalization of an
abelian category of modules.

In almost mathematics, the theory of derived categories is a bit
complicated.  For instance, exact sequences in the almost-world are not
exact in the corresponding module category in general.  By considering
the derived category of a bilocalization of the category of modules,
this paper provides algebraic $K$-theory and cobordism of almost
mathematics. More preciously, we define the derived category (or stable
$\infty$-category) of almost perfect complexes, enabling us to obtain
the $K$-theory of almost modules. In this paper, we technically
introduce a new $K$-theory $K^+(A)$, for any almost algebra $A$, which
decomposes into almost $K$-theory of $A$ and the $K$-theory of almost
acyclic parts. Using the properties of $K^+(A)$, we prove the almost Gersten property (Theorem~\ref{AlmostGersten}) and an equivalence of almost $K$-theories between almost algebras and the corresponding non-unital algebras (Theorem~\ref{K-ideal}).

Furthermore, defining almost finite syntomic algebras, we can get the
defining algebraic cobordism in almost mathematics similar to Elmanto,
Hoyois, Khan, Sosnilo, and Yakerson's work~\cite{ModMGL} via framed
correspondences.

This paper's another result is the tilting equivalence between the
almost algebraic cobordism of zero-section stable integral perfectoid
algebras by a similar argument to the author's previous
work~\cite{K-MGL} about the algebraic cobordism of non-unital integral
perfectoid algebras (Theorem~\ref{limit-1}). This result lets us expect
some analogy between non-unital and almost algebra. Here the condition
zero-section stability is defined as follows: Let $i: \{0 \} \to
\Delta^\infty= \mathrm{colim}\Delta^n $ denote the canonical injection
and $\pi:\Delta^\infty \to \Delta^0 $ the projection. One has a Quillen
adjunction
\[ 
 \pi_* \circ i_* : (\sSet)_{/ \Delta^{0} } \rightleftarrows (\sSet)_{/ \Delta^{0} } :  \pi^* \circ i^*
\]
of simplicial model categories. A simplicial set $X$ is {\it
zero-section stable} if the morphism $\pi \circ i \circ \eta : X \to
\pi_*(i_*(X) )$ induced by the unit map $\eta: X \to ( i^* \circ \pi^*
)(( \pi_* \circ i_* ) (X)) $ is a weak equivalence.  Let $\A_S^\bullet$
denote the cosimplicial motivic space defined by
$\A_S^\bullet([n])=\A_S^n$ for each $n \ge 0$. Then the singular functor
$\Hom_S(\A^\bullet,\,- ): \MS \to \MS$ admits a left adjoint. We
We say that $X$ is locally zero-section stable if the simplicial sheaf
$\Hom_S (\A^\bullet_S,\,X )$ is stalk-wise zero-section stable. Let
$\MSp^0$ denote the full subcategory of zero-section motivic
spectra. Then the inclusion functor: $\MSp^0 \to \MSp$ admits a left
adjoint $\mathrm{Z}_0: \MSp \to \MSp^0$ which is defined to be the
homotopy colomoit
\[
  \mathrm{Z}_0(X)= \varinjlim_{n} \overbrace {(\pi_* \circ i_*)   \circ  (\pi_* \circ i_*)   \circ \cdots  \circ (\pi_* \circ i_*)}^n (X).   
\] 
We call $\mathrm{Z}_0(X)$ the {\it zero-section stabilization} of $X$.

This paper is organized as follows: In Section~\ref{sec:almost-derived},
we provide a brief review of Quillen's work~\cite{Q} bilocalization by
the Serre subcategory of almost zero modules. Section~\ref{sec:K-theory}
defines almost perfect complexes by using derived functors of
categorical equivalences between full subcategories spanned by local
objects and spanned by colocal objects. Via the equivalence, we prove
the comparison theorem of almost $K$-theories
(Theorem~\ref{TheoremAlmostK}). In Section~\ref{sec:Almost-MGL}, on the
viewpoint of non-unital algebra, we study the left adjoint of the
localization functor from the category of algebras to one of almost
algebras.  We define almost finite syntomic algebras, enabling us to
establish the almost version of algebraic cobordism by following
\cite{ModMGL}.  In the final part of this paper, localizing the stable
$\infty$-category of motivic spectra by zero-section stable finite
syntomic morphisms, we prove the tilting equivalence of the almost
algebraic cobordism for zero-section stable integral perfectoid
algebras.

\section{Bilocalization of abelian category of modules}
\label{sec:almost-derived}

In this paper, we fix an unital commutative ring $V$. In this section; we
review Quillen's unpublished note~\cite{Q} in the almost mathematics
language, giving the proof of the propositions.

Let $\mathfrak{m}$ be an idempotent ideal of $V$. A $V$-module
$M$ is said to be {\it almost zero} if $\mathfrak{m}M=0$. A
$V$-homomorphism $f:M \to N$ of $V$-modules is called {\it an almost
isomorphism} if both the kernel and the cokernel of $f$ are almost zero.

\begin{lemma}[\cite{Q}]
\label{I-tensor} Let $V$ be an unital commutative ring and $\mathfrak{m}$
an idempotent ideal. Then $\mathfrak{m} \otimes_V M=0$ if and only if
$M$ is almost zero.
\end{lemma}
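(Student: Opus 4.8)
The plan is to prove the two implications separately; it is worth noting in advance that only one of them uses idempotency of $\mathfrak{m}$.

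\emph{Almost zero implies $\mathfrak{m}\otimes_V M=0$.} Assume $\mathfrak{m}M=0$. Since $\mathfrak{m}$ is idempotent we have $\mathfrak{m}=\mathfrak{m}^2$, so every $a\in\mathfrak{m}$ can be written as a finite sum $a=\sum_i b_i c_i$ with $b_i,c_i\in\mathfrak{m}$. A general element of $\mathfrak{m}\otimes_V M$ is a finite sum of simple tensors $a\otimes m$ with $a\in\mathfrak{m}$ and $m\in M$, and for each such tensor one computes $a\otimes m=\sum_i b_i c_i\otimes m=\sum_i b_i\otimes c_i m$, where $c_i m\in\mathfrak{m}M=0$; hence $a\otimes m=0$ and therefore $\mathfrak{m}\otimes_V M=0$. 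The point to be careful about here is that one must argue on simple tensors rather than invoke an isomorphism $\mathfrak{m}\otimes_V\mathfrak{m}\cong\mathfrak{m}^2$, since the multiplication map $\mathfrak{m}\otimes_V\mathfrak{m}\to\mathfrak{m}$ is in general only surjective, not injective; working at the level of generators sidesteps this.

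\emph{$\mathfrak{m}\otimes_V M=0$ implies almost zero.} Start from the short exact sequence of $V$-modules $0\to\mathfrak{m}\to V\to V/\mathfrak{m}\to 0$. Applying $-\otimes_V M$ and using right exactness of the tensor product yields an exact sequence $\mathfrak{m}\otimes_V M\to M\to (V/\mathfrak{m})\otimes_V M\to 0$ in which the first map is $a\otimes m\mapsto am$, whose image is exactly $\mathfrak{m}M$. If $\mathfrak{m}\otimes_V M=0$, this image vanishes, i.e. $\mathfrak{m}M=0$, so $M$ is almost zero. This direction does not use idempotency of $\mathfrak{m}$.

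Overall the argument is formal once the two exact/generator computations above are set up, so I do not anticipate a genuine obstacle; the only subtlety is the one flagged in the first implication, namely resisting the temptation to replace $\mathfrak{m}\otimes_V\mathfrak{m}$ by $\mathfrak{m}^2$.
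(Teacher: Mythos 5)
Your proof is correct and follows essentially the same route as the paper: the nontrivial direction is handled by writing $a=\sum_i b_i c_i$ via idempotency and moving the $c_i$ across the tensor sign, exactly as in the paper's argument, while the converse (which the paper dismisses as "clear") is just the observation that the image of $\mathfrak{m}\otimes_V M\to M$ is $\mathfrak{m}M$. Your cautionary remark about working with simple tensors rather than an isomorphism $\mathfrak{m}\otimes_V\mathfrak{m}\cong\mathfrak{m}^2$ is apt but does not change the substance.
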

\begin{proof}
The only if direction is clear. Assume $\mathfrak{m}M =0$. For $a
\otimes x \in \mathfrak{m} \otimes_V M \ (a \in \mathfrak{m},\,x \in
M)$, there exists $( a_i b_i) \subset \mathfrak{m}^2$ such that
$a=\sum_i a_i b_i $. Hence one has $ a \otimes x=(\sum_i a_i b_i
)\otimes x=\sum_i a_i b_i \otimes x=\sum_i a_i \otimes b_i x = 0$. \qed
\end{proof}

\begin{proposition}[\cite{Q}]
\label{almost} Let $V$ be an unital ring, $\mathfrak{m}$ an idempotent
ideal of $V$, and $M$ a $V$-module. Then the canonical morphisms $\mu:
\mathfrak{m} \otimes_V M \to M$ and $\mu': M \to
\Hom_{V}(\mathfrak{m},\,M)$ are both almost isomorphisms.
\end{proposition}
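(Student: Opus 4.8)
The plan is to prove the two assertions separately, using Lemma~\ref{I-tensor} as the engine for both. For the map $\mu: \mathfrak{m}\otimes_V M \to M$, I would identify the kernel and cokernel and show each is annihilated by $\mathfrak{m}$. The cokernel is $M/\mathfrak{m}M$, which is almost zero essentially by definition (since $\mathfrak{m}\cdot(M/\mathfrak{m}M) = \mathfrak{m}M/\mathfrak{m}M = 0$), so that half is immediate. For the kernel $K = \ker(\mu)$, I would tensor the defining short exact sequence $0 \to K \to \mathfrak{m}\otimes_V M \to \mathfrak{m}M \to 0$ with $\mathfrak{m}$; the point is that $\mathfrak{m}\otimes_V(\mathfrak{m}\otimes_V M) \cong \mathfrak{m}\otimes_V M$ because $\mathfrak{m}$ is idempotent, so a diagram chase (or the fact that $\mathfrak{m}\otimes_V -$ applied to $\mu$ is an isomorphism, which follows from $\mathfrak{m}\otimes_V\mathfrak{m} \cong \mathfrak{m}$ surjecting onto $\mathfrak{m}$ and Lemma~\ref{I-tensor} to kill the difference) forces $\mathfrak{m}\otimes_V K = 0$, whence $K$ is almost zero by Lemma~\ref{I-tensor}.

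For the map $\mu': M \to \Hom_V(\mathfrak{m}, M)$, sending $x$ to the homomorphism $a \mapsto ax$, I would again analyze kernel and cokernel. The kernel consists of those $x$ with $ax = 0$ for all $a \in \mathfrak{m}$, i.e. exactly the elements killed by $\mathfrak{m}$; this is the largest almost zero submodule of $M$, so it is almost zero and we are done with the kernel. For the cokernel $C$, the cleanest route is to show that $\mathfrak{m}\otimes_V C = 0$ and invoke Lemma~\ref{I-tensor}; equivalently, I would show $\mathfrak{m}\cdot\Hom_V(\mathfrak{m},M) \subseteq \mathrm{image}(\mu')$. Concretely, given $a \in \mathfrak{m}$ and $\varphi \in \Hom_V(\mathfrak{m},M)$, write $a = \sum_i a_i b_i$ with $a_i, b_i \in \mathfrak{m}$; then for any $c \in \mathfrak{m}$ one computes $(a\varphi)(c) = \varphi(ac) = \varphi(\sum_i a_i b_i c) = \sum_i c\,\varphi(a_i b_i)$, which shows $a\varphi = \mu'(\sum_i \varphi(a_i b_i))$, so $a\varphi$ lies in the image of $\mu'$. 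Hence $\mathfrak{m}C = 0$, i.e. $C$ is almost zero.

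I expect the main obstacle to be the kernel of $\mu: \mathfrak{m}\otimes_V M \to M$, since unlike all the other (co)kernels it is not visibly a subquotient of $M$ or $\mathfrak{m}$ on which $\mathfrak{m}$ acts in an obvious way; one genuinely needs the idempotency of $\mathfrak{m}$ together with Lemma~\ref{I-tensor} to control it, via the observation that $\mathfrak{m}\otimes_V -$ turns $\mu$ into an isomorphism. A convenient way to package this: the composite $\mathfrak{m}\otimes_V\mathfrak{m}\otimes_V M \to \mathfrak{m}\otimes_V M$ (tensoring $\mu$ with $\mathfrak{m}$) agrees with the map induced by multiplication $\mathfrak{m}\otimes_V\mathfrak{m}\to\mathfrak{m}$ tensored with $M$, and the latter is surjective with kernel killed by $\mathfrak{m}$ (again by idempotency and Lemma~\ref{I-tensor}), so after one more application of $\mathfrak{m}\otimes_V-$ one deduces $\mathfrak{m}\otimes_V K = 0$. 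Everything else reduces to the elementary ``$a = \sum_i a_i b_i$'' trick already used in the proof of Lemma~\ref{I-tensor}, so no serious difficulty is anticipated there.
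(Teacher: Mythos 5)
Your handling of three of the four (co)kernels is correct: the cokernel of $\mu$ is $M/\mathfrak{m}M$, the kernel of $\mu'$ is the $\mathfrak{m}$-torsion submodule $\Hom_V(V/\mathfrak{m},M)$, and your explicit computation that $a\varphi=\mu'\bigl(\sum_i\varphi(a_ib_i)\bigr)$ for $a=\sum_ia_ib_i$ correctly shows the cokernel of $\mu'$ is killed by $\mathfrak{m}$. The gap is exactly where you predicted it, in the kernel of $\mu$, and it is a genuine one: the claim that $\mathfrak{m}\otimes_V\mathfrak{m}\otimes_VM\cong\mathfrak{m}\otimes_VM$ (equivalently, that $\mu_{\mathfrak{m}}:\mathfrak{m}\otimes_V\mathfrak{m}\to\mathfrak{m}$ is an isomorphism) is false for a general idempotent ideal. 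Idempotency gives surjectivity of $\mu_{\mathfrak{m}}$, and the $a=\sum_ia_ib_i$ trick shows its kernel is killed by $\mathfrak{m}$ --- but that kernel need not vanish; Lemma~\ref{I-tensor} only kills it after one \emph{further} application of $\mathfrak{m}\otimes_V-$, which is precisely the content of Corollary~\ref{I-tilde} and the reason the paper works with $\tm=\mathfrak{m}\otimes_V\mathfrak{m}$ rather than with $\mathfrak{m}$ itself. Moreover, even granting that $\mathfrak{m}\otimes_V\mu$ is surjective with almost zero kernel, tensoring $0\to K\to\mathfrak{m}\otimes_VM\to\mathfrak{m}M\to0$ with $\mathfrak{m}$ only identifies the \emph{image} of $\mathfrak{m}\otimes_VK$ inside $\mathfrak{m}\otimes_V\mathfrak{m}\otimes_VM$ with a certain kernel; since $\mathfrak{m}$ is not assumed flat, the map out of $\mathfrak{m}\otimes_VK$ need not be injective, so the chase does not force $\mathfrak{m}\otimes_VK=0$.

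The repair is easier than the route you chose. Either argue element-wise, exactly as in the proof of Lemma~\ref{I-tensor}: for $\xi=\sum_jc_j\otimes x_j\in K$ (so that $\sum_jc_jx_j=0$ in $M$) and $a=\sum_ia_ib_i\in\mathfrak{m}$ one has $a\xi=\sum_{i,j}a_ib_ic_j\otimes x_j=\sum_ia_i\otimes b_i\bigl(\sum_jc_jx_j\bigr)=0$, so $\mathfrak{m}K=0$ directly, with no need to pass through $\mathfrak{m}\otimes_VK$ at all. Or follow the paper, which disposes of all four (co)kernels in one stroke: the long exact sequences attached to $0\to\mathfrak{m}\to V\to V/\mathfrak{m}\to0$ identify them with $\Tor_i^V(V/\mathfrak{m},M)$ and $\Ext_V^i(V/\mathfrak{m},M)$ for $i=0,1$, and these are killed by $\mathfrak{m}$ because multiplication by any $a\in\mathfrak{m}$ is the zero endomorphism of $V/\mathfrak{m}$ and hence acts as zero on anything functorial in it.
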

\begin{proof}
Since the abelian category $\Mod_V$ is both enough projective and
injective, the kernels and the cokernels;
$\mathrm{Tor}_i^V(V/\mathfrak{m},\,M )$ and
$\mathrm{Ext}_V^i(V/\mathfrak{m},\,M )$ for $i= 0,\,1$, are killed by
$\mathfrak{m}$. \qed
\end{proof}

\begin{definition}
\label{firm-closed} In the situation of Proposition~\ref{almost}, we say
that $M$ is {\it firm} if $\mu$ is an isomorphism and $M$ is {\it
closed} if $\mu'$ is an isomorphism.
\end{definition}
In particular, an injective $V$-module $Q$ is closed if and only if it is $\mathfrak{m}$-torsion free (i.e. $\Hom_V (V / \mathfrak{m},\,Q)=0$).

\begin{corollary}[\cite{Q}]
\label{I-tilde} Let $\mathfrak{m}$ be an idempotent ideal of an unital
ring $V$. Then the map $\mu_\mathfrak{m}:\mathfrak{m} \otimes_V
\mathfrak{m} \to \mathfrak{m}$ is an almost isomorphism and
$\mu_\mathfrak{m} \otimes \mathfrak{m} :\mathfrak{m}\otimes_V
\mathfrak{m} \otimes_V \mathfrak{m} \to \mathfrak{m} \otimes_V
\mathfrak{m}$ already an isomorphism.
\end{corollary}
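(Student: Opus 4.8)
The plan is to deduce both assertions from Proposition~\ref{almost} together with the basic fact (Lemma~\ref{I-tensor}) that a module is almost zero precisely when it dies after tensoring with $\mathfrak{m}$. For the first claim, I would simply apply Proposition~\ref{almost} to the module $M=\mathfrak{m}$: the canonical map $\mu_{\mathfrak{m}}:\mathfrak{m}\otimes_V\mathfrak{m}\to\mathfrak{m}$ is a special case of $\mu:\mathfrak{m}\otimes_V M\to M$, hence an almost isomorphism. So the only real content is the second statement, that $\mu_{\mathfrak{m}}\otimes_V\mathfrak{m}$ is an honest isomorphism.

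For that, first I would record that an almost isomorphism $f:M\to N$ becomes an isomorphism after applying $\mathfrak{m}\otimes_V(-)$. Indeed, writing $K=\ker f$ and $C=\operatorname{coker} f$, both are almost zero; splitting $f$ into $0\to K\to M\to \operatorname{im}f\to 0$ and $0\to\operatorname{im}f\to N\to C\to 0$ and tensoring with $\mathfrak{m}$, the right-exactness of $\otimes$ gives $\mathfrak{m}\otimes C=0$ by Lemma~\ref{I-tensor}, so $\mathfrak{m}\otimes N\cong\mathfrak{m}\otimes\operatorname{im}f$; for the first sequence one also needs $\operatorname{Tor}_1^V(\mathfrak{m},C)$-type terms, but these are again annihilated by $\mathfrak{m}$ (as in Proposition~\ref{almost}, since $\mathfrak{m}^2=\mathfrak{m}$), and tensoring once more with $\mathfrak{m}$ kills them outright. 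Hence $\mathfrak{m}\otimes_V f$ is an isomorphism. Applying this with $f=\mu_{\mathfrak{m}}$ yields exactly that $\mu_{\mathfrak{m}}\otimes_V\mathfrak{m}:\mathfrak{m}\otimes_V\mathfrak{m}\otimes_V\mathfrak{m}\to\mathfrak{m}\otimes_V\mathfrak{m}$ is an isomorphism.

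An alternative, more hands-on route avoiding any Tor bookkeeping: use idempotence directly. Since $\mathfrak{m}^2=\mathfrak{m}$, the multiplication $\mu_{\mathfrak{m}}$ is surjective, and it suffices to produce a two-sided inverse after tensoring with $\mathfrak{m}$. The composite $\mathfrak{m}\otimes\mathfrak{m}\otimes\mathfrak{m}\xrightarrow{\mathfrak{m}\otimes\mu_{\mathfrak{m}}}\mathfrak{m}\otimes\mathfrak{m}$ and $\mathfrak{m}\otimes\mathfrak{m}\otimes\mathfrak{m}\xrightarrow{\mu_{\mathfrak{m}}\otimes\mathfrak{m}}\mathfrak{m}\otimes\mathfrak{m}$ agree and are split by the map $a\otimes b\mapsto \sum_i a\otimes b_i\otimes c_i$ coming from a presentation $b=\sum_i b_ic_i\in\mathfrak{m}^2$; one checks this is well-defined and inverse to $\mu_{\mathfrak{m}}\otimes\mathfrak{m}$, exactly as in the proof of Lemma~\ref{I-tensor}.

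The main obstacle is the well-definedness check in either approach: in the Tor-free argument one must be careful that tensoring a second time genuinely annihilates the first-order correction terms (not just the zeroth-order kernel/cokernel), and in the splitting argument one must verify independence of the chosen presentation $b=\sum_i b_ic_i$ — this is the standard subtlety that makes $\mathfrak{m}\otimes_V\mathfrak{m}$, rather than $\mathfrak{m}$ itself, the "correct" idempotent object. I would present the splitting argument, since it is self-contained and mirrors the computation already given for Lemma~\ref{I-tensor}.
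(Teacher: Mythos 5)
Your handling of the first claim is exactly the paper's: apply Proposition~\ref{almost} with $M=\mathfrak{m}$. For the second claim, however, both of your routes have a gap, and both miss the one observation that makes the statement elementary: since $\mathfrak{m}^2=\mathfrak{m}$, the map $\mu_{\mathfrak{m}}$ is already \emph{surjective}, so its cokernel is zero and only the kernel $K=\ker\mu_{\mathfrak{m}}$ (almost zero, by the first claim) needs attention. Tensoring the short exact sequence $0\to K\to\mathfrak{m}\otimes_V\mathfrak{m}\to\mathfrak{m}\to 0$ with $\mathfrak{m}$, right-exactness alone gives that $\ker(\mu_{\mathfrak{m}}\otimes\mathfrak{m})$ is the image of $K\otimes_V\mathfrak{m}$, which vanishes by Lemma~\ref{I-tensor}; surjectivity is again right-exactness. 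A single tensoring suffices and no $\operatorname{Tor}$ terms enter. This is what the paper's one-line citation of Lemma~\ref{I-tensor} and Proposition~\ref{almost} encodes.

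Route 1 as written does not prove the asserted statement. The claim that $\mathfrak{m}\otimes N\cong\mathfrak{m}\otimes\operatorname{im}f$ for a general almost isomorphism does not follow from right-exactness: the kernel of that surjection is the image of $\operatorname{Tor}_1^V(\mathfrak{m},C)$, which is almost zero but need not vanish. Your fix, ``tensoring once more with $\mathfrak{m}$,'' proves that $\tm\otimes_V f$ is an isomorphism, which is a different functor from $\mathfrak{m}\otimes_V f$ — and the corollary asserts the latter for $f=\mu_{\mathfrak{m}}$. For a general almost isomorphism, $\mathfrak{m}\otimes_V f$ genuinely need not be an isomorphism (this is precisely why the paper works with $\tm$ rather than $\mathfrak{m}$), so no argument treating $\mu_{\mathfrak{m}}$ as a generic almost isomorphism can close after one tensoring; you must use its surjectivity. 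Route 2, the one you say you would present, is left incomplete at exactly the point you flag: well-definedness of $a\otimes b\mapsto\sum_i a\otimes b_i\otimes c_i$ with respect to the presentation $b=\sum_i b_ic_i$ and to the representation of the element of $\mathfrak{m}\otimes_V\mathfrak{m}$ is not a routine check to be deferred — it amounts to the vanishing of the image of $\mathfrak{m}\otimes_V K$ in $\mathfrak{m}^{\otimes 3}$, i.e., to the very statement being proved, so deferring it leaves the argument circular.
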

\begin{proof}
This follows from Lemma~\ref{I-tensor} and Proposition~\ref{almost}. \qed 
\end{proof}

\begin{corollary}[\cite{Q} Proposition 4.1 and Proposition 5.3]
  Let $V$ be an unital ring and $\mathfrak{m}$ an idempotent ideal of
  $V$. Write $\tm= \mathfrak{m} \otimes_V \mathfrak{m}$.  For any
  $V$-module $M$, $\tm \otimes_V M$ is firm and $\Hom( \tm,\,M)$ closed.
\end{corollary}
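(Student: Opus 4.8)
The plan is to reduce the statement to Corollary~\ref{I-tilde} together with Proposition~\ref{almost}. Recall $\tm = \mathfrak{m}\otimes_V\mathfrak{m}$, and that by Corollary~\ref{I-tilde} the multiplication map $\mu_{\mathfrak m}\otimes\mathfrak{m}: \tm\otimes_V\mathfrak{m}\to\mathfrak{m}\otimes_V\mathfrak{m}=\tm$ is already an isomorphism; equivalently, tensoring once more with $\mathfrak m$ makes the idempotent-ideal multiplication an honest iso. First I would show $\tm\otimes_V M$ is firm, i.e.\ that $\mu: \mathfrak{m}\otimes_V(\tm\otimes_V M)\to \tm\otimes_V M$ is an isomorphism. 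Using associativity of the tensor product, rewrite $\mathfrak{m}\otimes_V\tm\otimes_V M = (\mathfrak{m}\otimes_V\mathfrak{m}\otimes_V\mathfrak{m})\otimes_V M$, and observe that under this identification $\mu$ is precisely $(\mu_{\mathfrak m}\otimes\mathfrak{m})\otimes_V M$, which is an isomorphism by Corollary~\ref{I-tilde}. Hence $\tm\otimes_V M$ is firm. (One should check the identification of maps carefully: the canonical $\mu$ for the module $\tm\otimes_V M$ is induced by $\mathfrak m\otimes\mathfrak m\otimes\mathfrak m\to\mathfrak m\otimes\mathfrak m$ collapsing the first two factors via multiplication, which agrees with $\mu_{\mathfrak m}\otimes\mathfrak m$ up to the symmetry/associativity isomorphisms.)

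For the second assertion, that $\Hom_V(\tm, M)$ is closed, I would dualize the same idea. Closedness means $\mu': \Hom_V(\tm,M)\to\Hom_V(\mathfrak m,\Hom_V(\tm,M))$ is an isomorphism. By the tensor–hom adjunction, $\Hom_V(\mathfrak m,\Hom_V(\tm,M))\cong\Hom_V(\mathfrak m\otimes_V\tm, M)=\Hom_V(\mathfrak m\otimes_V\mathfrak m\otimes_V\mathfrak m, M)$, and under this adjunction isomorphism $\mu'$ is identified with the map $\Hom_V(-,M)$ applied to $\mu_{\mathfrak m}\otimes\mathfrak m:\mathfrak m\otimes_V\mathfrak m\otimes_V\mathfrak m\to\mathfrak m\otimes_V\mathfrak m$. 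Since that structure map is an isomorphism by Corollary~\ref{I-tilde}, applying the contravariant functor $\Hom_V(-,M)$ yields an isomorphism, so $\mu'$ is an isomorphism and $\Hom_V(\tm,M)$ is closed.

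The routine part is the bookkeeping with tensor–hom adjunction and associativity/commutativity constraints; the only genuine subtlety, and the step I expect to be the main obstacle, is verifying that the canonical maps $\mu$ and $\mu'$ attached to the modules $\tm\otimes_V M$ and $\Hom_V(\tm,M)$ really do correspond, under the natural isomorphisms, to the structure map $\mu_{\mathfrak m}\otimes\mathfrak m$ of Corollary~\ref{I-tilde} rather than to some other collapsing of the three $\mathfrak m$-factors. This is essentially a naturality/coherence check: one writes out the definition of $\mu$ (resp.\ $\mu'$) as induced by the multiplication $\mathfrak m\otimes_V\mathfrak m\to\mathfrak m$ in the appropriate tensor slot, and uses that by Corollary~\ref{I-tilde} collapsing \emph{any} adjacent pair of the three factors $\mathfrak m\otimes_V\mathfrak m\otimes_V\mathfrak m\to\mathfrak m\otimes_V\mathfrak m$ is an isomorphism (the two collapsings differ by the symmetry isomorphism of $\mathfrak m\otimes_V\mathfrak m$, hence are simultaneously iso). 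Once this identification is in place, both claims follow immediately. This also recovers, and slightly refines, the bare almost-isomorphism statement of Proposition~\ref{almost}: passing from $M$ to $\tm\otimes_V M$ upgrades ``almost isomorphism'' to ``isomorphism''.
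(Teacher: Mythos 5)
Your proof is correct and follows essentially the same route as the paper: both deduce from Corollary~\ref{I-tilde} that $\mathfrak{m}\otimes_V\tm\to\tm$ is an honest isomorphism, then tensor with $M$ to get firmness and apply $\Hom_V(-,M)$ plus tensor--hom adjunction to get closedness. The coherence check you flag (that the canonical $\mu$ and $\mu'$ correspond to $\mu_{\mathfrak m}\otimes\mathfrak m$ under associativity and adjunction) is exactly the implicit content of the paper's one-line argument, so there is no gap.
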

\begin{proof}
By Corollary~\ref{I-tilde}, the counit $\mu_{ \tm \otimes_V M} :
\mathfrak{m} \otimes_V \tm \otimes_V M \to \tm \otimes_V M $ is an
isomorphism, inducing canonical isomorphisms $\Hom(\tm,\,M)\simeq
\Hom(\mathfrak{m} \otimes_V \tm ,\,M) \simeq
\Hom(\mathfrak{m},\,\Hom(\tm,\,M))$. \qed
\end{proof}

Let $M$ be a closed module and an injective hall $ M \to Q $. Then, by
the right exactness of $\Hom_V (\mathfrak{m},\,-)$, one has an injection
$M \to\Hom_V (\tm,\,Q)$. Then $\Ext_V^1(N,\, \Hom_V(\tm,\,Q
))=\Ext_V^1(N,\, \mathbb{R}\Hom_V(\tm,\,Q )) \simeq
\Ext_V^1(N\otimes^{\mathbb{L}}_V \tm ,\, Q )=0$ implies that
$\Hom_V(\tm,\,Q ) $ is injective. Hence, a $V$-module is closed if and
only if it is a kernel of a homomorphism of closed injective modules.

Let $\mathcal{S}$ denote the Serre subcategory of $\Mod_V$ spanned by
almost zero modules.  We say that a $V$-module $M$ is
$\mathcal{S}$-colocal (resp. $\mathcal{S}$-local) if for any almost
isomorphism $f:N_1 \to N_2$, the induced map
\[	 
  f_*:\Hom_V(M,\,N_1) \to \Hom_V(M,\,N_2)\, \quad (\text{resp. } f^*:\Hom_V( N_2,\,M) \to \Hom_V(N_1,\,M))						
\]
is an isomorphism.  The following well-known proposition is crucial for
almost mathematics.  We explain the proof:
\begin{proposition}[\cite{Q} Proposition 4.1 and Proposition 5.3]
\label{almost-film-closed} Let $V$ be an unital ring, $\mathfrak{m}$ an
idempotent ideal of $V$ and $M$ a $V$-module.  Let $\mathcal{S}$ denote
the Serre subcategory of almost zero modules of $\Mod_V$. Then the
following conditions are equivalent:
\begin{enumerate}[(1)]
 \item The $V$-module $M$ is $\mathcal{S}$-colocal.
 \item The $V$-module $M$ is firm. 
\item The $V$-module $M$ satisfies that
       $\Ext^0_V(M,\,N)=\Ext_V^1(M,\,N)= 0$ for any almost zero module
       $N$.
\end{enumerate}
Similarly, the following conditions are equivalent:
\begin{enumerate}[(1)']
\item The $V$-module $M$ is $\mathcal{S}$-local.
\item The $V$-module $M$ is closed.
\item The $V$-module $M$ satisfies that
       $\Ext^0_V(N,\,M)=\Ext_V^1(N,\,M)= 0$ for any almost zero module
       $N$.
\end{enumerate}
\end{proposition}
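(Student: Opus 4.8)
The plan is to prove each of the two equivalences by a short cyclic chain of implications, using that the ``local/closed'' statement is formally dual to the ``colocal/firm'' one under interchanging the endofunctors $\mathfrak{m}\otimes_V(-)$ and $\Hom_V(\mathfrak{m},-)$ and reversing arrows. The elementary input used repeatedly is the idempotency of $\mathfrak{m}$, which yields: (i) every element of $\mathfrak{m}\otimes_V M$ lies in $\mathfrak{m}\cdot(\mathfrak{m}\otimes_V M)$; (ii) any $V$-homomorphism $\mathfrak{m}\to N$ annihilated by $\mathfrak{m}$ vanishes on $\mathfrak{m}^2=\mathfrak{m}$, hence is zero, so $\Hom_V(\mathfrak{m},N)=0$ for $N$ almost zero; (iii) $\mathfrak{m}M=M$ as soon as $\mu$ is surjective. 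Besides these, I will use Proposition~\ref{almost}, which says $\mu$ and $\mu'$ are almost isomorphisms whose kernels and cokernels are annihilated by $\mathfrak{m}$, and Lemma~\ref{I-tensor}, which gives $\mathfrak{m}\otimes_V N=0$ for $N$ almost zero.

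On the colocal side I would argue $(2)\Rightarrow(3)\Rightarrow(1)\Rightarrow(2)$. For $(2)\Rightarrow(3)$: if $M$ is firm then $M=\mathfrak{m}M$ by (iii), so any map $M\to N$ into an almost zero $N$ has image in $\mathfrak{m}N=0$, giving $\Ext_V^0(M,N)=0$; given an extension $0\to N\to E\to M\to 0$, tensoring with $\mathfrak{m}$ and invoking $\mathfrak{m}\otimes_V N=0$ gives an isomorphism $\alpha\colon\mathfrak{m}\otimes_V E\xrightarrow{\sim}\mathfrak{m}\otimes_V M$, and $\mu_E\circ\alpha^{-1}\circ\mu^{-1}\colon M\to E$ is a section of $E\to M$ by naturality of $\mu$, so $\Ext_V^1(M,N)=0$. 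For $(3)\Rightarrow(1)$: factor an almost isomorphism $f\colon N_1\to N_2$ as $N_1\twoheadrightarrow\operatorname{im}f\hookrightarrow N_2$ and apply the long exact sequence of $\Hom_V(M,-)$ to the two short exact sequences, whose kernel and cokernel terms are almost zero; the vanishing of $\Ext^0_V(M,-)$ and $\Ext^1_V(M,-)$ on those terms forces $f_*$ to be an isomorphism. For $(1)\Rightarrow(2)$: applying $\Hom_V(M,-)$ to the almost isomorphism $\mu\colon\mathfrak{m}\otimes_V M\to M$ produces a section $s$ of $\mu$; then $\phi:=s\mu-\mathrm{id}$ has image inside the almost zero module $\ker\mu$, so $\mathfrak{m}\cdot\operatorname{im}\phi=0$, and by (i) every element of $\mathfrak{m}\otimes_V M$ is a sum of elements $a y$ with $a\in\mathfrak{m}$, on which $\phi$ vanishes; hence $\phi=0$ and $\mu$ is an isomorphism.

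The local side is the dual argument. For $(2')\Rightarrow(3')$: if $M$ is closed, $\Hom_V(N,M)=\Hom_V(N,\Hom_V(\mathfrak{m},M))=\Hom_V(\mathfrak{m}\otimes_V N,M)=0$ for $N$ almost zero by Lemma~\ref{I-tensor}; and for an extension $0\to M\to E\to N\to 0$, applying the left exact functor $\Hom_V(\mathfrak{m},-)$ and using $\Hom_V(\mathfrak{m},N)=0$ from (ii) gives $\Hom_V(\mathfrak{m},M)\xrightarrow{\sim}\Hom_V(\mathfrak{m},E)$, which together with $\mu'$ and its naturality splits the extension. $(3')\Rightarrow(1')$ is the same two–short–exact–sequence computation with the contravariant $\Hom_V(-,M)$. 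For $(1')\Rightarrow(2')$: applying $\Hom_V(-,M)$ to $\mu'\colon M\to\Hom_V(\mathfrak{m},M)$ gives a retraction $r$ of $\mu'$; then $\mu'r-\mathrm{id}$ kills $\operatorname{im}(\mu')$, so its image is a quotient of the almost zero module $\operatorname{coker}(\mu')$, hence each of its values is a homomorphism $\mathfrak{m}\to M$ annihilated by $\mathfrak{m}$ and therefore zero by (ii); thus $\mu'r=\mathrm{id}$ and $\mu'$ is an isomorphism.

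The main obstacle is the two ``one-sided inverse implies isomorphism'' steps $(1)\Rightarrow(2)$ and $(1')\Rightarrow(2')$: colocality (resp.\ locality) only produces a section of $\mu$ (resp.\ a retraction of $\mu'$), and upgrading this to a genuine inverse is exactly where the idempotency of $\mathfrak{m}$ must be exploited — on the tensor side through the fact that $\mathfrak{m}\otimes_V M$ is generated by $\mathfrak{m}\cdot(\mathfrak{m}\otimes_V M)$, and on the Hom side through the fact that a homomorphism out of $\mathfrak{m}$ is determined by its restriction to $\mathfrak{m}^2=\mathfrak{m}$. A secondary point requiring care is bookkeeping which $\Ext$-groups are actually needed at each stage of the long exact sequence arguments; in $(3)\Rightarrow(1)$ one genuinely needs the vanishing of $\Ext^1_V(M,-)$ on almost zero modules, not merely of $\Hom_V(M,-)$, to obtain surjectivity.
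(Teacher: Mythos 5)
Your proposal is correct: every implication in the two cycles $(2)\Rightarrow(3)\Rightarrow(1)\Rightarrow(2)$ and $(2')\Rightarrow(3')\Rightarrow(1')\Rightarrow(2')$ goes through, and the overall cyclic structure coincides with the paper's. The individual steps, however, are argued differently in three places. For $(1)\Rightarrow(2)$ the paper only extracts the section $s$ of $\mu$ and then quotes the fact that $M$, being a direct factor of the firm module $\tm\otimes_V M$, is itself firm; you instead upgrade the section to a two-sided inverse by showing $s\mu-\mathrm{id}$ vanishes because its image lies in the almost zero module $\ker\mu$ while $\mathfrak{m}\otimes_V M$ equals $\mathfrak{m}\cdot(\mathfrak{m}\otimes_V M)$ — this is self-contained and avoids invoking the corollary that $\tm\otimes_V M$ is firm (your dual argument for $(1')\Rightarrow(2')$ is likewise more laborious than the paper's one-line observation that locality applied to the almost isomorphism $\mathfrak{m}\to V$ directly yields that $\mu'$ is an isomorphism). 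For $(2)\Rightarrow(3)$ the paper passes through the derived adjunction with $D(V/\mathfrak{m})$ and a change-of-rings spectral sequence with $E_2^{pq}=\Ext^p_{V/\mathfrak{m}}(\Tor_q^V(V/\mathfrak{m},M),N)$; your explicit splitting of an extension $0\to N\to E\to M\to 0$ by tensoring with $\mathfrak{m}$ and using $\mathfrak{m}\otimes_V N=0$ is more elementary and makes the role of firmness transparent. For $(2')\Rightarrow(3')$ the paper uses the existence of an injective resolution by closed injectives, whereas you split extensions directly via $\Hom_V(\mathfrak{m},-)$ and the vanishing $\Hom_V(\mathfrak{m},N)=0$; again both are valid, and yours requires less preparatory material. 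Your closing remark correctly identifies where idempotency is genuinely needed and that $(3)\Rightarrow(1)$ requires the $\Ext^1$ vanishing on the kernel term, which matches the paper's use of the two short exact sequences through $\mathrm{Im}\,f$.
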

\begin{proof}
Assume that $M$ is $\mathcal{S}$-local. By Proposition~\ref{almost}, the
induced map
\[
    \mu_*: \Hom(M,\, \mathfrak{m}\otimes_V M  ) \to    \Hom(M,\, M  )
\]
is bijective. Hence $\mu: \mathfrak{m}\otimes_V M \to M$ is surjective,
in particular $M=\mathfrak{m}M$. Since $M $ is a direct factor of
$\mathfrak{m} \otimes_V M$, the composition $M \to \mathfrak{m}
\otimes_V M \to \tm \otimes_V M$ is split. A direct factor $M$ of the
firm module $\tm \otimes_V M$ is also firm.

Assume that $M$ is firm. Let $N$ be an almost zero module. We prove that
$ \Ext^i_V(M,\,N )=0$ for $i=0,\,1$. The condition $\mathfrak{m}N=0$
implies that $N$ is also $V/\mathfrak{m}$-module. The adjunction between
the derived categories:
\[
( V/\mathfrak{m}) \otimes_V^{ \mathbb{L}}- : D(V) \rightleftarrows    D(V/\mathfrak{m}): \mathbb{R} \Hom_{V} (V/\mathfrak{m},\,- )
\]
induces a canonical equivalence $\mathbb{R} \Hom_{V} ((V/\mathfrak{m})
\otimes_V^{\mathbb{L}} M,\,N) \simeq \mathbb{R} \Hom_{V} (M,\,N)$ and
the spectral sequence
\[
	E_{2}^{pq}= \Ext_{V/\mathfrak{m}}^{p} (\Tor_{q}^V(V/\mathfrak{m},\, M),\,N )
\Longrightarrow \Ext_V^{p+q}(M,\,N) \] where $E_2^{pq}=0$ for
$q=0,\,1$. Hence one has $\Ext_V^{0}(M,\,N)= \Ext_V^{1}(M,\,N)=0$.

Assume that condition (3). Let $f:N_1 \to N_2$ be a morphism of
$V$-modules with almost zero modules $N'= \mathrm{Ker} f$ and
$N''=\mathrm{Coker} f$. Write $N= \mathrm{Im} f$ and consider an exact
sequences:
\[
  0 \to N' \to N_1 \to N \to 0  \quad \text{and} \quad   0 \to N \to N_2 \to N'' \to 0.
\] 
We obtain that the induced map $f_*:\Hom_V(M,\,N_1) \to \Hom_V(M,\,N)
\to \Hom_V(M,\,N_2)$ is composition of isomorphisms by the assumption
(3).

Next, assume that $M$ is $\mathcal{S}$-colocal. Then the almost
isomorphism $:\mathfrak{m} \to V$ induces an isomorphism $M \to
\Hom_{V}(\mathfrak{m},\,M)$. Therefore $M$ is closed.

If $M$ is closed, there exist an injective resolution $0 \to M \to Q^0
\to Q^1 \to Q^2 \to \cdots $, where $Q^i$ is closed for $i=0,\,1$. By
the canonical isomorphism $\Hom_V(N ,\, \Hom_{V} (\mathfrak{m}, \,M))
\simeq \Hom_V(N\otimes_{V} \mathfrak{m},\, M )$, for any almost zero
module $N$, $\Hom_V(N,\,Q^i) \simeq \Hom_V(N\otimes_{V} \mathfrak{m},\,
Q^i )=0 \ (i=0,\,1)$. Therefore we obtain $\Ext_V^i(N,\,M)=0$ for
$i=0,\,1$.

By the similar argument of the above (3) to (1), the implication (3)' to
(1)' holds.

\qed
\end{proof}

\begin{theorem}[\cite{Q} Theorem 4.5 and Theorem 5.6]
\label{A} Let $V$ be an unital ring and $\mathfrak{m}$ an idempotent
ideal of $V$.  Let $\mathcal{S}$ denote the Serre subcategory of almost
zero modules of $\Mod_V$ and $\AMod_V$ the localization of $\Mod_V$ by $\mathcal{S}$. Then the localization functor $(-)^a: \Mod_{V}
\to \AMod_V$ is also a colocalization. Furthermore, the left adjoint is
isomorphic to the functor $\tm \otimes_V -: \AMod_V \to \Mod_V$ whose
essential image is the full subcategory of film modules, and the right
adjoint is isomorphic to $\Hom_V(\tm,\,-): \AMod_V \to \Mod_V$ whose
essential image is the full subcategory of closed modules. \qed
\end{theorem}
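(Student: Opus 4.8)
The plan is to construct the two adjoint functors explicitly and then read off the remaining assertions from the results already established. Since $\mathcal{S}$ is a Serre subcategory, the functor $(-)^a\colon\Mod_V\to\AMod_V$ is exact, and since the almost zero modules are closed under arbitrary direct sums, $\mathcal{S}$ is a localizing subcategory; hence by general localization theory $(-)^a$ automatically admits a right adjoint $j_*$, whose essential image is the full subcategory of $\mathcal{S}$-local objects, that is (by Proposition~\ref{almost-film-closed}) the closed modules. The part that is special to almost mathematics is thus that $(-)^a$ \emph{also} admits a left adjoint, and that both adjoints are computed by $\tm\otimes_V-$ and $\Hom_V(\tm,-)$. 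I would therefore proceed in three steps: first show that $u:=\tm\otimes_V-$ and $\rho:=\Hom_V(\tm,-)$, a priori functors on $\Mod_V$, invert every almost isomorphism and so descend to functors $\AMod_V\to\Mod_V$; then identify $\rho$ with $j_*$ and deduce that $u$ is left adjoint to $(-)^a$; and finally compute the essential images and check full faithfulness.

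For the first step, any almost isomorphism factors as a composite of two short exact sequences, one with almost zero kernel and one with almost zero cokernel (as in the proof of Proposition~\ref{almost-film-closed}). Feeding this through the long exact sequences of $u$ and $\rho$, it suffices to know that both functors annihilate almost zero modules and have vanishing first derived functor on them. For $\rho$ this is immediate: $\tm$ is firm by Corollary~\ref{I-tilde}, so $\Ext_V^0(\tm,N)=\Ext_V^1(\tm,N)=0$ for every almost zero $N$ by Proposition~\ref{almost-film-closed}, implication $(2)\Rightarrow(3)$. For $u$, Lemma~\ref{I-tensor} already gives $\tm\otimes_V N=\mathfrak{m}\otimes_V(\mathfrak{m}\otimes_V N)=0$; for the vanishing of $\Tor_1^V(\tm,N)$ I would use that $\mathfrak{m}\otimes_V\tm\to\tm$ is an isomorphism (Corollary~\ref{I-tilde}), which makes $\tm\otimes_V^{\mathbb{L}}(V/\mathfrak{m})$ have no homology in degrees $0$ and $1$, so the base-change spectral sequence along $V\to V/\mathfrak{m}$ forces $\Tor_1^V(\tm,N)=0$ for every $V/\mathfrak{m}$-module, i.e.\ for every almost zero $N$. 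I expect this to be the main obstacle: it is exactly the place where idempotency of $\mathfrak{m}$ is indispensable (there is no flatness hypothesis on $\mathfrak{m}$), and it is what makes $(-)^a$ a colocalization rather than merely a localization.

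For the second step, the $\mathcal{S}$-localization of a module $N$ is characterized up to unique isomorphism as any closed module admitting an almost isomorphism from $N$. Now $\Hom_V(\tm,N)$ is closed by the corollary asserting that $\tm\otimes_V M$ is firm and $\Hom_V(\tm,M)$ is closed for all $M$, and the canonical map $N\to\Hom_V(\tm,N)$ is an almost isomorphism by Proposition~\ref{almost} together with Corollary~\ref{I-tilde}; hence $\rho\cong j_*$ naturally, which gives the statement about the right adjoint. For the left adjoint, tensor--hom adjunction provides, naturally in $N,M\in\Mod_V$, the chain $\Hom_V(\tm\otimes_V N,M)\cong\Hom_V\bigl(N,\Hom_V(\tm,M)\bigr)=\Hom_V\bigl(N,j_*(M^a)\bigr)\cong\Hom_{\AMod_V}(N^a,M^a)$, and since the left-hand side depends only on $N^a$ this says precisely that $u$ is left adjoint to $(-)^a$.

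For the third step, $u(N^a)=\tm\otimes_V N$ is firm and $\rho(M^a)=\Hom_V(\tm,M)$ is closed by the corollary just cited; conversely, if $M$ is firm then $\tm\otimes_V M=\mathfrak{m}\otimes_V(\mathfrak{m}\otimes_V M)\cong M$, and if $M$ is closed then $\Hom_V(\tm,M)=\Hom_V\bigl(\mathfrak{m},\Hom_V(\mathfrak{m},M)\bigr)\cong\Hom_V(\mathfrak{m},M)\cong M$, so the essential images are respectively the firm and the closed modules. Finally, the natural maps $\tm\otimes_V M\to M$ and $M\to\Hom_V(\tm,M)$ are almost isomorphisms (Proposition~\ref{almost} and Corollary~\ref{I-tilde}), hence become isomorphisms in $\AMod_V$, so $(-)^a\circ u\simeq\mathrm{id}_{\AMod_V}\simeq(-)^a\circ\rho$; a standard argument then shows $u$ and $\rho$ are fully faithful, so $(-)^a$ is a colocalization as well as a localization, which completes the proof.
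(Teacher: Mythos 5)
Your proof is correct. The paper itself gives no argument for this theorem (it is cited from Quillen's note and closed with an immediate \qed), but your proof is exactly the intended assembly of the section's preceding results: the only genuinely new ingredient you supply is the vanishing of $\Tor_1^V(\tm,N)$ for almost zero $N$, and your base-change spectral-sequence argument for it is the precise mirror of the $\Ext$ argument the paper already uses in the proof of Proposition~\ref{almost-film-closed}.
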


Let $V$ be an unital ring and $\mathfrak{m}$ an idempotent ideal of $V$.
An {\it almost $V$-module} is an object of $\AMod_V$ in Theorem~\ref{A} and an {\it almost $V$-algebra} a commutative algebra object of $\AMod_V$. 
Let $A$ be a unital $V$-algebra and $(-)^a:\Mod_A \to \AMod_A$ denote the
bilocalization of $\Mod_A$ by the Serre subcategory of almost zero
$A$-modules.

\begin{corollary}
Let $V$ be an unital ring and $\mathfrak{m}$ an idempotent ideal of $V$
and $A$ a unital $V$-algebra.  Let $\FMod_A$ denote the full subcategory of
$\Mod_A$ spanned by firm $A$-modules and $\CMod_A$ by closed
$A$-modules. Then those adjunctions
\begin{align*}
  \tm \otimes_V (-)   : \AMod_A &\rightleftarrows \FMod_A: (-)^a \\
                  (-)^a :\CMod_A &\rightleftarrows \AMod_A : \Hom_V(\tm ,\,-  )
 \end{align*}
are categorical equivalences. Assume that $\tm$ is a flat
$V$-module. The functor $\tm \otimes_V (-)$ induces categorical
equivalences of the derived categories:
\begin{align*}
    \tm \otimes_V (-):    D(\AMod_A) &\rightleftarrows D(\FMod_A  ): (-)^a \\
      \tm \otimes_V (-) :  D(\CMod_A) &\rightleftarrows \   D(\FMod_A): \mathbb{R} \Hom_V(\tm ,\,-  ).
\end{align*} \qed
\end{corollary}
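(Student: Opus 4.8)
The plan is to deduce all four equivalences from Theorem~\ref{A} and its consequences, first at the level of $1$-categories and then for the derived categories under the flatness hypothesis on $\tm$. For the $1$-categorical statements, I would observe that Theorem~\ref{A} (applied with $V$ replaced by the $V$-algebra $A$, noting that an idempotent ideal $\mathfrak{m}$ of $V$ generates the idempotent ideal $\mathfrak{m}A$ of $A$, and $\tm\otimes_V A=\widetilde{\mathfrak{m}A}$) already asserts that the localization-colocalization functor $(-)^a\colon\Mod_A\to\AMod_A$ has left adjoint $\tm\otimes_V(-)$ with essential image the firm modules and right adjoint $\Hom_V(\tm,-)$ with essential image the closed modules. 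Since a left (resp. right) adjoint of a Bousfield (co)localization restricts to an equivalence onto its essential image, both displayed adjunctions $\tm\otimes_V(-)\colon\AMod_A\rightleftarrows\FMod_A$ and $(-)^a\colon\CMod_A\rightleftarrows\AMod_A$ are equivalences of categories; the unit and counit are isomorphisms precisely by Corollary~\ref{I-tilde} (which gives $\mu_\mathfrak{m}\otimes\mathfrak{m}$ an isomorphism, hence $\tm\otimes_V-$ idempotent on firm modules) together with Proposition~\ref{almost-film-closed} characterizing firm and closed modules as the $\mathcal S$-colocal and $\mathcal S$-local objects.

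Next I would pass to derived categories assuming $\tm$ is a flat $V$-module. Flatness makes $\tm\otimes_V(-)$ an exact functor on $\Mod_A$, so it descends to a triangulated (indeed $\infty$-categorical) functor $D(\AMod_A)\to D(\FMod_A)$ without needing to derive it, and likewise $\Hom_V(\tm,-)=\mathbb{R}\Hom_V(\tm,-)$ since $\tm$ is flat hence has a finite flat resolution in the relevant sense — more precisely, for the adjunction with $\mathbb{R}\Hom_V(\tm,-)$ I would use that closed modules have injective resolutions by closed injectives (established in the paragraph before Proposition~\ref{almost-film-closed}), so $\mathbb{R}\Hom_V(\tm,-)$ is computed termwise on such resolutions. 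The equivalence $\tm\otimes_V(-)\colon D(\AMod_A)\rightleftarrows D(\FMod_A)\colon(-)^a$ then follows because an exact equivalence of abelian categories (or of their subcategories of (co)local objects, which are closed under the operations needed to build resolutions) induces an equivalence of derived categories; and the composite $D(\CMod_A)\xrightarrow{(-)^a}D(\AMod_A)\xrightarrow{\tm\otimes_V(-)}D(\FMod_A)$ with quasi-inverse $\mathbb{R}\Hom_V(\tm,-)$ gives the last line.

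The main obstacle I anticipate is the bookkeeping needed to make ``induces an equivalence of derived categories'' legitimate: $\FMod_A$ and $\CMod_A$ are not abelian subcategories of $\Mod_A$ (firm modules need not be closed under kernels, closed modules not under cokernels, since a sequence exact in $\AMod_A$ need not be exact in $\Mod_A$ — exactly the phenomenon flagged in the introduction), so one cannot simply invoke a theorem about derived categories of abelian categories. The clean way around this is to not work with $D(\FMod_A)$ as a derived category of an abelian category at all, but to \emph{define} it as the essential image of $D(\AMod_A)$ under the exact functor $\tm\otimes_V(-)$, equivalently as the localization of the homotopy category of complexes of firm modules at quasi-isomorphisms; then flatness of $\tm$ guarantees $\tm\otimes_V(-)$ sends quasi-isomorphisms of complexes of almost modules (computed via firm resolutions) to quasi-isomorphisms, and the $1$-categorical equivalences above upgrade to the derived level because the (co)unit natural isomorphisms are already isomorphisms before deriving. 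I would spell out that a complex of firm modules is acyclic in $D(\FMod_A)$ iff its image under $(-)^a$ is acyclic in $D(\AMod_A)$ — which holds because $(-)^a$ is exact and $\tm\otimes_V(-)\circ(-)^a\simeq\mathrm{id}$ on firm modules by Corollary~\ref{I-tilde} — and the rest is formal.
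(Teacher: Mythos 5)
Your argument is correct and coincides with what the paper intends: the corollary carries no proof at all (it is stated with a terminal \textrm{qed}, being treated as an immediate consequence of Theorem~\ref{A}, namely that the two adjoints of a bilocalization restrict to equivalences onto their essential images), and your derived-category discussion supplies exactly the details the paper omits. One small correction: flatness of $\tm$ does not make $\Hom_V(\tm,-)$ exact (flat does not imply projective), so the parenthetical claim that $\Hom_V(\tm,-)=\mathbb{R}\Hom_V(\tm,-)$ should be dropped in favour of the injective-resolution argument you give immediately afterwards; note also that when $\tm$ is flat the subcategory $\FMod_A$ is in fact closed under kernels and cokernels in $\Mod_A$ (since $\tm\otimes_V(-)$ then commutes with both), which disposes of the first of the two obstacles you raise and leaves only $\CMod_A$ as the genuinely delicate case.
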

Write $(-)_!= \tm \otimes_V \Hom_V (\tm,\,(-) ): \AMod_A \to
\FMod_A$. Since $\tm$ is flat, the functor $(-)_!$ is left
exact. Furthermore, if $f:M \to N$ is almost surjective, one has
$(\mathrm{Coker} f)_!=0$, being $(-)_!$ is right exact. Hence $(-)_!$ is
an exact functor.

\section{Almost algebraic $K$-theory}
\label{sec:K-theory}
\subsection{Almost perfect complexes}
In this section, we always assume that $\tm$ is a flat $V$-module.  

\begin{definition}
An $A$-module $M$ is {\it almost finitely generated} (resp. {\it almost
finitely presented}) if for any filtered inductive system $(N_\alpha)$
of firm $A$-module, the canonical map
\begin{equation}
\label{compact}
\varinjlim  \Hom_{\FMod_A}(\tm \otimes_V M,\,N_\alpha ) \to  
\Hom_{\FMod_A}(\tm \otimes_V M,\,\varinjlim N_\alpha )   
\end{equation}
is almost injective (resp. an almost isomorphism). 
\end{definition}

\begin{definition}
An $A$-module $P$ is {\it almost projective} if the functor
$\Hom_{\AMod_A}(P,\, - ) : \AMod_A \to \AMod_A$ is exact, equivalently,
$\tm \otimes_V   \Hom_{\FMod_A}(\tm \otimes_V P,\, - ): \FMod_A \to \FMod_A$ is exact.
\end{definition}
\begin{proposition}
Let $P$ be a finitely generated projective $A$-module. Then $\tilde{\mathfrak{m}} \otimes_V P$ is an almost finitely presented projective $A$-module.  
\end{proposition}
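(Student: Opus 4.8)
**Proof proposal for: $\tm \otimes_V P$ is an almost finitely presented projective $A$-module when $P$ is finitely generated projective.**

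The plan is to reduce everything to the case $P = A^n$ via the fact that a finitely generated projective module is a direct summand of a free module of finite rank, and then check the two defining properties directly. First I would observe that both conditions in question---being almost finitely presented and being almost projective---are stable under passing to direct summands (and finite direct sums): the colimit-comparison map $\varinjlim \Hom_{\FMod_A}(\tm \otimes_V M, N_\alpha) \to \Hom_{\FMod_A}(\tm \otimes_V M, \varinjlim N_\alpha)$ is natural in $M$ and sends a direct sum $M = M_1 \oplus M_2$ to the product of the corresponding maps, so it is an almost isomorphism for $M$ iff it is for each $M_i$; similarly exactness of $\Hom_{\AMod_A}(P, -)$ passes to summands because $\Hom_{\AMod_A}(P_1 \oplus P_2, -) \simeq \Hom_{\AMod_A}(P_1,-) \oplus \Hom_{\AMod_A}(P_2,-)$ in the abelian category $\AMod_A$. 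Hence it suffices to treat $P = A$, and then take finite direct sums and retracts.

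For $P = A$: here $\tm \otimes_V A$ is a firm $A$-module (it is the image of $A^a$ under the left adjoint $\tm \otimes_V (-) : \AMod_A \to \FMod_A$ of Theorem~\ref{A}, or directly by Corollary~\ref{I-tilde}). To see it is almost finitely presented, I would rewrite the comparison map using the adjunction $\tm \otimes_V (-) : \AMod_A \rightleftarrows \FMod_A : (-)^a$ of the Corollary following Theorem~\ref{A}: for any firm $A$-module $N$, $\Hom_{\FMod_A}(\tm \otimes_V A, N) \simeq \Hom_{\AMod_A}(A^a, N^a) \simeq N^a$ (as almost modules), naturally in $N$. Since filtered colimits in $\AMod_A$ are computed as the localization of filtered colimits in $\Mod_A$ and $(-)^a$ is exact (in particular commutes with filtered colimits), the map \eqref{compact} becomes the canonical isomorphism $\varinjlim (N_\alpha^a) \to (\varinjlim N_\alpha)^a$, which is an actual isomorphism of almost modules, hence certainly an almost isomorphism. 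For almost projectivity of $\tm \otimes_V A$: the functor $\Hom_{\AMod_A}(\tm \otimes_V A, -)$ is, under the same identification, isomorphic to $\Hom_{\AMod_A}((\tm \otimes_V A)^a, -) = \Hom_{\AMod_A}(A^a, -)$, and $\Hom_{\AMod_A}(A^a, -)$ is the forgetful-type identity-like functor on $\AMod_A$ (since $A^a$ is the unit object), which is exact. Equivalently, one invokes that $\tm$ is flat over $V$ so that $\tm \otimes_V (-)$ is exact, and composes exact functors.

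The main obstacle I expect is bookkeeping the passage between $\AMod_A$, $\FMod_A$, and $\CMod_A$ correctly---in particular making sure the identification $\Hom_{\FMod_A}(\tm \otimes_V A, N) \simeq N^a$ is the right one and that the colimit in \eqref{compact} is being taken in the category the definition intends (firm $A$-modules, where filtered colimits need not agree naively with those in $\Mod_A$ before applying $(-)^a$). I would handle this by consistently translating the whole comparison diagram into $\AMod_A$ via the equivalence $\tm \otimes_V (-)$, where filtered colimits and exactness are unproblematic because $(-)^a$ is exact and $\AMod_A$ is a Grothendieck abelian category; once everything lives in $\AMod_A$ the statement reduces to the triviality that $\Hom_{\AMod_A}(A^a, -)$ commutes with filtered colimits and is exact, together with the summand-stability from the first paragraph. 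A secondary point to verify is that $\tm \otimes_V P$ is genuinely firm (not merely almost firm) for $P$ finitely generated projective, which follows from the Corollary after Theorem~\ref{A} applied to $P$, since $\tm \otimes_V P \simeq \tm \otimes_V (\tm \otimes_V P^a)$ is in the essential image of the left adjoint.
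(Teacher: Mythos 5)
Your proposal is correct and follows essentially the same route as the paper: reduce to $P=A$ via direct summands, identify $\Hom_{\FMod_A}(\tm\otimes_V A,-)$ through the adjunction (the paper uses $(-)_*=\Hom_V(\tm,-)$ where you use $(-)^a$, which amounts to the same thing under the equivalence $\CMod_A\simeq\AMod_A$), and deduce both the colimit-compatibility and the exactness from that identification. The only cosmetic difference is that the paper concludes by tensoring the comparison map $\varinjlim (N_\alpha)_*\to(\varinjlim N_\alpha)_*$ with $\tm$, whereas you pass to $\AMod_A$ where the map is already an isomorphism.
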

\begin{proof}
Since $P$ is a direct factor of finitely generated free $A$-module, it is sufficient to check that $\tm \otimes_V A$ is an almost finitely presented projective $A$-module. Then the source of the canonical morphism (\ref{compact}) is 
isomorphic to $\varinjlim (N_\alpha)_*$ and the target $(\varinjlim N_\alpha )_*$. Therefore,  (\ref{compact}) is an isomorphism after tensoring with $\tm$, implying that $\tm \otimes_V A $ is almost finitely presented. For any firm module $M$, both the morphisms of the diagram $M  \leftarrow  \tm \otimes_V M  \rightarrow  \tm \otimes_V (M_*)$ are isomorphisms. Hence  the functor $\Hom_{\FMod_A}(\tm \otimes_V A,\, - ) = \tm \otimes (-)_*$ is clearly exact. \qed   
\end{proof}

Let $\APMod_A$ denote the full subcategory of $\Mod_A$ spanned by those objects $\tilde{\mathfrak{m}} \otimes_V P$ where $P$ is a finitely generated projective $A$-module.

\begin{proposition}
\label{firm-compact}
The firm module $\tm \otimes_V A$ is a compact object of
$D(\FMod_A)$. \qed
\end{proposition}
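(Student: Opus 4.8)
The plan is to leverage the two properties of $\tm\otimes_V A$ established in the preceding propositions: it is a projective object of the abelian category $\FMod_A$ (the functor $\Hom_{\FMod_A}(\tm\otimes_V A,\,-)$ is exact), and it is almost finitely presented. The abelian category $\FMod_A\simeq\AMod_A$ of Theorem~\ref{A} is Grothendieck, so its derived category is presentable and has all small coproducts. For a projective object $P$ of a Grothendieck abelian category, mapping out of $P$ is underived: $\Hom_{D(\FMod_A)}(P,\,M^{\bullet})\cong\Hom_{\FMod_A}(P,\,H^{0}(M^{\bullet}))$ for every complex $M^{\bullet}$, since an exact $\Hom(P,\,-)$ commutes with the formation of cohomology. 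Because cohomology in a Grothendieck category commutes with filtered colimits (AB5), it follows that $P$ is compact in $D(\FMod_A)$ precisely when $P$ is a compact object of the abelian category $\FMod_A$, i.e.\ when $\Hom_{\FMod_A}(P,\,-)$ preserves filtered colimits. So I would reduce the statement to this abelian-categorical compactness of $P=\tm\otimes_V A$.

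To handle the colimits I would first observe that a filtered colimit of firm modules, formed in $\Mod_A$, is again firm: since tensoring commutes with colimits and each term satisfies $\tm\otimes_V N_{\alpha}\cong N_{\alpha}$, one gets $\tm\otimes_V\varinjlim N_{\alpha}\cong\varinjlim(\tm\otimes_V N_{\alpha})\cong\varinjlim N_{\alpha}$; hence filtered colimits in $\FMod_A$ coincide with those in $\Mod_A$ (and likewise for arbitrary coproducts). Under the equivalence $\tm\otimes_V(-)\colon\AMod_A\rightleftarrows\FMod_A\colon(-)^{a}$ the object $\tm\otimes_V A$ corresponds to the unit $A^{a}$, so $\Hom_{\FMod_A}(\tm\otimes_V A,\,-)$ is, up to this equivalence, the global-sections/almost-elements functor. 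The almost finite presentation of $\tm\otimes_V A$ says exactly that the canonical comparison map
\[
\varinjlim\Hom_{\FMod_A}(\tm\otimes_V A,\,N_{\alpha})\longrightarrow\Hom_{\FMod_A}(\tm\otimes_V A,\,\varinjlim N_{\alpha})
\]
becomes an isomorphism after applying $\tm\otimes_V(-)$; since in $\FMod_A\simeq\AMod_A$ every almost isomorphism is an isomorphism, the comparison map is already an isomorphism in the relevant category, which is what we need.

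The main obstacle is making that last step precise, i.e.\ distinguishing carefully between the mapping set $\Hom_{\Mod_A}(\tm\otimes_V A,\,N)\cong\Hom_V(\tm,\,N)$ and the mapping object internal to $\FMod_A$: the former need not commute with filtered colimits when $\tm$ fails to be finitely presented (as happens, e.g., for $\mathfrak m=(\varpi^{1/p^{\infty}})$ over a perfectoid ring), so one genuinely has to work inside the localization $\AMod_A$, where almost isomorphisms have been inverted, using the idempotency $\tm\otimes_V\tm\cong\tm$ of Corollary~\ref{I-tilde}. Equivalently — and this is probably the cleanest route — one takes $D(\FMod_A)$ to be the ind-completion of the thick subcategory generated by $\APMod_A$ (the almost perfect complexes), in which case $\tm\otimes_V A\in\APMod_A$ is compact by construction and the content of the proposition becomes the compatibility of this tautological compactness with the derived-category structure described above. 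The remaining verifications — that $\Hom$ out of a projective object is computed in degree zero, and that coproducts of firm modules are firm — are routine.
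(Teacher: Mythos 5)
The paper gives no argument for this proposition (it is stated with an immediate \qed), so there is nothing to compare against; judged on its own, your proposal contains a genuine gap, and it is exactly the one you flag in your third paragraph without actually closing. The Hom-groups of $D(\FMod_A)$ are honest abelian groups: via the equivalence $\FMod_A\simeq\AMod_A$ and the fully faithful left adjoint $\tm\otimes_V(-)\colon D(\AMod_A)\to D(\Mod_A)$, one has $\Hom_{D(\FMod_A)}(\tm\otimes_V A,\,F)\cong\Hom_{D(\Mod_A)}(\tm\otimes_V A,\,F)\cong H^0\bigl(\mathbb{R}\Hom_V(\tm,\,F)\bigr)$ for a firm complex $F$. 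Passing to $\AMod_A$ does not change these groups; it only inverts almost isomorphisms between \emph{objects}. So the assertion that the comparison map ``is already an isomorphism in the relevant category'' because ``every almost isomorphism is an isomorphism in $\AMod_A$'' is a category error: the comparison map is a morphism of Hom-groups, and knowing it is an almost isomorphism of $V$-modules does not make it bijective. The same problem infects your first reduction: $\Hom_{\FMod_A}(\tm\otimes_V A,\,-)=\Hom_V(\tm,\,-)=(-)_*$ valued in abelian groups is only \emph{almost} exact and \emph{almost} compatible with (co)limits (its derived functors $\Ext^n_V(\tm,\,-)$ are almost zero on firm modules, not zero, since $\tm$ is flat but not projective or finitely presented). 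Hence $\tm\otimes_V A$ is not a projective object of the abelian category $\FMod_A$ in the literal sense, the identity $\Hom_{D(\FMod_A)}(P,\,M^\bullet)\cong\Hom_{\FMod_A}(P,\,H^0(M^\bullet))$ is unjustified, and compactness in the triangulated sense requires controlling $\Ext^n_V(\tm,\,\bigoplus_iF_i)$ for all $n\ge 0$, which the almost-finite-presentation condition (a statement about $n=0$ only, and only up to almost isomorphism) does not give you.

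Your fallback — declaring $D(\FMod_A)$ to be the ind-completion of the thick subcategory generated by $\APMod_A$ — makes the claim true by fiat but proves a different statement about a different category. What the surrounding text of the paper actually supports (see the proof that $\tm\otimes_V A$ is almost finitely presented projective, where exactness is checked for $\tm\otimes(-)_*=(-)_!$ rather than for $(-)_*$) is compactness of $\tm\otimes_V A$ with respect to the $\AMod_V$-valued mapping objects, i.e.\ that $(-)_!=\tm\otimes_V\Hom_V(\tm,\,-)$ is exact and commutes with coproducts of firm complexes; your computation does establish that weaker, ``almost'' form of the statement. If the literal statement is what is intended, an additional argument (or an additional hypothesis on $\tm$, such as $\tm$ being a filtered colimit of free modules of rank one with injective transition maps, together with a Mittag--Leffler analysis of $\mathbb{R}\varprojlim$) is needed and is supplied neither by you nor by the paper.
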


We define the $K$-theory of almost mathematics by using
Thomason--Trobaugh's~\cite{TT} method: For any algebra $A$,
$\mathrm{Perf}(A)$ denotes the full triangulated subcategory of
$D(\Mod_A)$ generated by an object $A$, and an object of
$\mathrm{Perf}(A)$ is called a {\it perfect complex}. Any full
triangulated subcategory of $D(\Mod_A)$ generated by some objects is
assumed to be idempotent complete throughout this section.  We define an
almost version of perfect complexes:
\begin{definition}
Let $\mathrm{APerf}(A)$ denote the full triangulated subcategory of
$D(\FMod_A)$ generated by $\tm \otimes_V A$. An {\it almost perfect
complex} $E$ is an object of $D(\AMod_A)$ satisfying $ \tm \otimes_V E
\in \mathrm{APerf}(A)$. That is. The category $\mathrm{Perf}^{\rm
al}(A)$ of almost perfect $A$-complexes is defined to be the pullback $
D(\AMod_A) \times_{D(\FMod_A)} \mathrm{APerf}(A)$ of triangulated
categories.
\end{definition}

\begin{definition}
Let $A$ be an almost $V$-algebra and $K^{\rm al}(A)$ denote the
$K$-theory spectrum of the triangulated category of
$\mathrm{APerf}(A)$. We call $K^{\rm al}(A)$ the {\it almost
$K$-theory spectrum} of $A$.
\end{definition}

We will characterize almost $K$-theory.  
\begin{proposition}
\label{firm-perfect} The exact functor $\tm \otimes_V (-): \Mod_A \to
\FMod_A$ induces an essential surjective functor form the triangulated
category of perfect $A$-complexes to the one of almost perfect
$A$-complexes.  \qed
\end{proposition}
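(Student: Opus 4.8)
The plan is to establish two things: that $\tm \otimes_V(-)$ really descends to a triangulated functor $F\colon \mathrm{Perf}(A) \to \mathrm{APerf}(A)$, and that $F$ is essentially surjective.

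For the first point: since $\tm$ is a flat $V$-module, $\tm \otimes_V(-)$ is exact, hence preserves quasi-isomorphisms and acyclic complexes and descends to a triangulated endofunctor of $D(\Mod_A)$; as $\tm \otimes_V M$ is firm for every $M$ and $\FMod_A$ is a \emph{full} subcategory of $\Mod_A$, a complex of firm modules represents an object of $D(\FMod_A)$ and one checks termwise that the functor factors through $D(\FMod_A)$. It sends the generator $A$ of $\mathrm{Perf}(A)$ to the generator $\tm \otimes_V A$ of $\mathrm{APerf}(A)$, and since the preimage under a triangulated functor of the thick subcategory generated by $FX$ is itself a thick subcategory containing $X$, the functor restricts to $F\colon \mathrm{Perf}(A)=\langle A\rangle \to \langle \tm\otimes_V A\rangle = \mathrm{APerf}(A)$.

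For essential surjectivity I would pass through the bilocalization. Because $\tm$ is flat, the bilocalization of Theorem~\ref{A} persists on derived categories, providing an equivalence $\tm\otimes_V(-)\colon D(\AMod_A)\xrightarrow{\ \sim\ }D(\FMod_A)$ that carries the unit $A^a$ to $\tm\otimes_V A$ and identifies $F$ with the derived localization $(-)^a\colon \mathrm{Perf}(A)\to D(\AMod_A)$, the latter being a Bousfield localization of $D(\Mod_A)$ onto the Verdier quotient by the localizing subcategory of complexes with almost zero cohomology. By Proposition~\ref{firm-compact} the unit $A^a$ is a compact generator of $D(\AMod_A)$ (it generates because a free presentation of an $A$-module descends to one of the associated almost module). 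The Thomason--Neeman localization theorem applied to this quotient identifies $\mathrm{APerf}(A)=\langle A^a\rangle$ with the idempotent completion of the essential image of $\mathrm{Perf}(A)$ under $(-)^a$; since a Verdier quotient is the identity on objects and $\mathrm{Perf}(A)$ is idempotent complete by the convention of this section, every object of $\mathrm{APerf}(A)$ is a direct summand --- and, after lifting and splitting the defining idempotent inside $\mathrm{Perf}(A)$, an actual value --- of $F(P)$ for a perfect complex $P$, which is the essential surjectivity sought.

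The essential-surjectivity step is the main obstacle, and what forces the detour above is that $F$ is \emph{not} full: already on the generator it induces the canonical inclusion $A=\End_{D(\Mod_A)}(A)\hookrightarrow \End_{\AMod_A}(A^a)$ of $A$ into its associated closed algebra, which is in general strict. Hence one cannot realize objects of $\mathrm{APerf}(A)$ simply by lifting cones of module maps; it is the Verdier-quotient presentation, whose morphisms are roofs, that supplies enough liftable maps, while the residual bookkeeping --- that the direct summands occurring are already split off in $\mathrm{Perf}(A)$ --- is exactly what the blanket idempotent-completeness convention of this section takes care of.
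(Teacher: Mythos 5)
Your reduction to ``well-definedness plus essential surjectivity'' is the right frame, and the first half (that $\tm\otimes_V(-)$ is exact, lands in firm complexes, and carries the thick subcategory generated by $A$ into the one generated by $\tm\otimes_V A$) is correct. The paper gives no proof of this proposition at all, so the only thing to assess is your argument for essential surjectivity, and there it has a genuine gap --- in fact two.

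First, the Thomason--Neeman localization theorem does not apply as you invoke it: its hypothesis is that the kernel of the localization --- here the localizing subcategory of complexes with almost zero cohomology --- is generated by objects that are \emph{compact in the ambient category} $D(\Mod_A)$. The natural generators of that kernel are modules such as $A/\mathfrak{m}A$, and the paper itself remarks (opening of the subsection on splitting) that $\mathfrak{m}\otimes_V A$ and $A/\mathfrak{m}A$ are in general not perfect over $A$; when $\mathfrak{m}$ is not finitely generated the kernel contains essentially no nonzero perfect complexes, so the theorem yields no identification of the compacts of $D(\AMod_A)$ with the idempotent completion of the image of $\mathrm{Perf}(A)$. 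Second, even granting that identification, your final step --- upgrading ``direct summand of $F(P)$'' to ``isomorphic to some $F(P')$'' by splitting the idempotent ``inside $\mathrm{Perf}(A)$'' --- is blocked by precisely the non-fullness of $F$ that you yourself point out: the idempotent lives in $\End(F(P))$, which is strictly larger than the image of $\End(P)$, so it need not lift to $P$; the blanket idempotent-completeness convention only closes $\mathrm{Perf}(A)$ under summands taken in $D(\Mod_A)$ and cannot split idempotents that exist only after localization. Thomason's classification of dense triangulated subcategories shows this residue is a genuine $K_0$-obstruction, not bookkeeping. A route that avoids both issues, and the one the paper's proof of Theorem~\ref{TheoremAlmostK} gestures at, is to transport the problem along $\Hom_V(\tm,-)$ to closed modules, where objects of $\mathrm{APerf}(A)$ become perfect complexes of $A_*$-modules and morphisms do lift honestly; but that produces preimages in $\mathrm{Perf}(A_*)$ rather than $\mathrm{Perf}(A)$, so one still owes an argument comparing the two (or the reduction to $A=A_*$ used elsewhere in the paper).
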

\begin{lemma}
\label{PMod-closed} Let $P$ be a finitely generated projective
$A_*$-module. Then $P$ is closed.
\end{lemma}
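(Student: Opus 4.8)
The plan is to verify closedness of $P$ through the $\Ext$-vanishing criterion of Proposition~\ref{almost-film-closed}, rather than directly through the unit map $\mu'$. First I would record that $A_*$ is itself a closed $V$-module: writing $(-)_*=\Hom_V(\tm,\,-)$ for the right adjoint of $(-)^a$, Theorem~\ref{A} identifies its essential image with the full subcategory $\CMod_V$ of closed $V$-modules, and $A_*=\Hom_V(\tm,\,A)$ is the value of this functor at the almost algebra $A$ (equivalently, $A_*$ is of the form $\Hom(\tm,\,M)$, which is closed by the corollary preceding Proposition~\ref{almost-film-closed}). By the equivalence (2)'$\Leftrightarrow$(3)' of Proposition~\ref{almost-film-closed}, this says precisely that $\Ext^0_V(N,\,A_*)=\Ext^1_V(N,\,A_*)=0$ for every almost zero $V$-module $N$.

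Next, since $P$ is a finitely generated projective $A_*$-module, there is an $A_*$-module $Q$ together with an isomorphism $P\oplus Q\cong A_*^{\oplus n}$ for some $n$, which is in particular an isomorphism of $V$-modules. Because $\Ext^i_V(N,\,-)$ is additive, for any almost zero $V$-module $N$ we obtain
\[
  \Ext^i_V(N,\,P)\oplus\Ext^i_V(N,\,Q)\;\cong\;\Ext^i_V(N,\,A_*^{\oplus n})\;\cong\;\Ext^i_V(N,\,A_*)^{\oplus n}\;=\;0
\]
for $i=0,\,1$, whence $\Ext^0_V(N,\,P)=\Ext^1_V(N,\,P)=0$ for all almost zero $N$. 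Thus condition (3)' of Proposition~\ref{almost-film-closed} holds for $P$, and therefore $P$ is closed.

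The only conceptual point that deserves care — and what I would regard as the "obstacle", mild as it is — is that closedness is not inherited by arbitrary submodules or quotients, so the real content of the argument is its stability under finite biproducts and retracts. The $\Ext$-criterion makes both manifest simultaneously, whereas arguing through $\mu'\colon P\to\Hom_V(\mathfrak{m},\,P)$ directly, or through the description of closed modules as kernels of maps of closed injective modules, would force one to propagate the splitting $P\hookrightarrow A_*^{\oplus n}\twoheadrightarrow P$ through those constructions. No flatness of $\tm$ is actually needed for this lemma, although it remains in force as a standing assumption of the section; and one should keep in mind that "closed" in the statement refers to closedness of $P$ regarded as a $V$-module, which is exactly the notion governed by Proposition~\ref{almost-film-closed}.
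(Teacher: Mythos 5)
Your argument is correct and follows essentially the same route as the paper's proof: establish that $A_*^{\oplus n}$ is closed and then pass to the direct summand $P$. The only difference is that you justify the retract step explicitly via the $\Ext$-vanishing criterion (3)' of Proposition~\ref{almost-film-closed}, whereas the paper simply asserts that a direct factor of a closed module is closed.
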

\begin{proof} 
Clearly $A^n_*$ is closed for any integer $n \ge 0$.  By the assumption,
$P$ is a direct factor of some $A_*^n$, implying that $P$ is
closed. \qed
\end{proof}
By Proposition~\ref{almost-film-closed}, $\tm \otimes_V(-) $ and $\Hom_V
( \tm,\,- )$ invert almost isomorphisms, implying that one has $ \tm
\otimes_V ( \Hom_V ( \tm,\,A ) ) \simeq \tm \otimes_V A $ and $\Hom_V (
\tm,\,\tm \otimes_V A ) \simeq \Hom_V ( \tm,\, A )$.


\begin{remark}
While the triangulated category $D(\mathrm{Perf}(A_*))$ is contained in
$D(\CMod_A)$, which is distinct to $D(\Mod_{A_*})$. An $A_*$-module $\tm
\otimes_V A_*$ is not closed in general. Indeed, if $A_*$ is an unital
ring, $\Hom_V(\tm,\,\tm \otimes_V A_* ) \simeq A_* \not\simeq \tm
\otimes_V A_* \not\ni 1$.
\end{remark}


\subsection{Localization by almost quasi-isomorphisms}
Let $f:E \to E'$ be a morphism of
$A$-complexes. Then $f$ is said to be an {\it almost quasi-isomorphism}
if the induced morphism $\tm \otimes_V f: \tm \otimes_V E \to \tm
\otimes_V E' $ is a quasi-isomorphism.
\begin{proposition}
\label{almost-qis} Let $f:E \to E'$ be a morphism of $A$-complexes. Then
 the following conditions are equivalent:
\begin{itemize}
 \item[(1)] The morphism $f$ is an almost quasi-isomorphism. 
 \item[(2)] The firm complex $ \tm \otimes_V \mathrm{cone} f$ is acyclic. 
\end{itemize}
\end{proposition}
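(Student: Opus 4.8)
The plan is to show the equivalence $(1)\Leftrightarrow(2)$ by reducing everything to a standard fact about triangulated categories, namely that a morphism is a quasi-isomorphism if and only if its cone is acyclic, and then transporting this along the exact functor $\tm\otimes_V(-)$. First I would recall that $\tm$ is assumed flat over $V$ throughout this section, so $\tm\otimes_V(-)$ is an exact functor on complexes and in particular commutes with the formation of mapping cones: there is a natural isomorphism of complexes $\tm\otimes_V\mathrm{cone}(f)\simeq\mathrm{cone}(\tm\otimes_V f)$. This is the key structural input and it is essentially immediate from the componentwise definition of the cone together with exactness of $\tm\otimes_V(-)$.

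Given that identification, the argument is short. Assume $(1)$, so $\tm\otimes_V f:\tm\otimes_V E\to\tm\otimes_V E'$ is a quasi-isomorphism of $A$-complexes. Then its cone $\mathrm{cone}(\tm\otimes_V f)$ is acyclic (this is the standard long exact sequence argument: the cone sits in a distinguished triangle with $\tm\otimes_V E$ and $\tm\otimes_V E'$, and the associated long exact sequence in homology forces $H_n(\mathrm{cone})=0$ for all $n$ precisely when $H_n(\tm\otimes_V f)$ is an isomorphism for all $n$). By the natural isomorphism above, $\tm\otimes_V\mathrm{cone}(f)$ is therefore acyclic, which is $(2)$. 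Conversely, assuming $(2)$, the same natural isomorphism shows $\mathrm{cone}(\tm\otimes_V f)$ is acyclic, and running the long exact sequence in the other direction shows $\tm\otimes_V f$ induces isomorphisms on all homology groups, i.e.\ is a quasi-isomorphism, which is $(1)$.

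The only point that needs a little care — and the step I expect to be the main (mild) obstacle — is making sure that ``acyclic'' is interpreted consistently: $\mathrm{cone}(f)$ is an $A$-complex, and $\tm\otimes_V\mathrm{cone}(f)$ is a \emph{firm} complex, so when I say it is acyclic I mean it has vanishing homology as a complex of $V$-modules (equivalently of $A$-modules). Since $\tm\otimes_V(-)$ is exact, computing homology of $\tm\otimes_V C$ commutes with $\tm\otimes_V(-)$ applied to homology, but one should note that vanishing of $H_n(\tm\otimes_V C)$ for all $n$ is exactly the condition that $\tm\otimes_V C$ be acyclic — there is no subtlety about ``almost acyclic'' versus ``acyclic'' here because we have already tensored with $\tm$. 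With that observation in place the proof is a two-line invocation of the cone/homology dictionary, and no further input beyond flatness of $\tm$ is required.
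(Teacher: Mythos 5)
Your argument is correct and is exactly the routine cone/long-exact-sequence argument the paper has in mind when it dismisses the proof as ``straightforward''; the only substantive input is the natural isomorphism $\tm\otimes_V\mathrm{cone}(f)\simeq\mathrm{cone}(\tm\otimes_V f)$, which you correctly identify (and which in fact needs only additivity of $\tm\otimes_V(-)$ degreewise, not flatness). Nothing is missing.
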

\begin{proof}
The proof is straightforward. \qed 
\end{proof} 



Let $A$ be an almost $V$-algebra and $\mathrm{Perf}(A)$ denote the triangulated
category of perfect complexes of $A$-modules. Choosing classes of weak
equivalences on $\mathrm{Perf}(A)$ defines the following $K$-theory
spectra:
\begin{definition}
\label{DefTT}
We define $K$-theory spectra as the following:
\begin{itemize}
\item The almost Thomason--Trobaugh $K$-theory $K^{\rm aTT}(A)$ is the
      Waldhausen $K$-theory of $\mathrm{Perf}(A)$ whose
      weak-equivalences are almost quasi-isomorphisms.
\item The Thomason--Trobaugh $K$-theory $K^{\rm TT}(A^{\mathfrak{m}})$
      is the Waldhausen $K$-theory of the category
      $\mathrm{Perf}^{\mathfrak{m}}(A)$ spanned by almost acyclic
      perfect $A$-complexes.
\end{itemize} 
\end{definition}

To compare almost $K$-theories mentioned above, we recall the following the
approximation theorem:
\begin{theorem}[\cite{W1}, Theorem 1.6.7 (Approximation theorem)]
\label{W-app} Let $\mathcal{C}$ and $\mathcal{D}$ be small 
 Waldhausen categories whose weak equivalences satisfy the 2-out-of-3 property and $F: \mathcal{C} \to \mathcal{D}$ an
 exact functor.  If the following conditions hold:
\begin{itemize}
 \item[(1)] The category  $\mathcal{C}$ is closed under cylinder objects.   
 \item[(2)] A morphism in $f$ is a weak equivalence in $\mathcal{C}$ if
	    and only if $F(f)$ is a weak equivalence in $\mathcal{D}$.
 \item[(3)] The functor $F$ is a coCartesian fibration. That is, for any
	    object $c$ of $\mathcal{C}$ and any map $x: F(c) \to d$ in
	    $\mathcal{D}$, there exists a map $a: c \to c'$ and a weak
	    equivalence $x': F(c') \to d$ such that $x = x' \circ F(a)$.
\end{itemize} 
Then the induced morphism $K(F): K(\mathcal{C} ) \to K(\mathcal{D})$ is
a weak equivalence.  \qed
\end{theorem}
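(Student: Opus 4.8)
The plan is to reproduce Waldhausen's proof of the approximation theorem, the core of which is to show that $F$ induces a degreewise weak homotopy equivalence on the simplicial categories of weak equivalences coming from the $S_\bullet$-construction, $wS_\bullet\mathcal{C}\to wS_\bullet\mathcal{D}$, and then to pass to geometric realizations and loop spaces (iterating the $S_\bullet$-construction for the higher structure maps) using that $K(\mathcal{C})\simeq\Omega|wS_\bullet\mathcal{C}|$.

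First I would isolate the single-level statement: under hypotheses (1)--(3) (with weak equivalences saturated, i.e.\ satisfying 2-out-of-3), the map of nerves $N(w\mathcal{C})\to N(w\mathcal{D})$ is a weak homotopy equivalence. I would deduce this from Quillen's Theorem A applied to the functor $wF\colon w\mathcal{C}\to w\mathcal{D}$. For an object $d$ of $\mathcal{D}$ one considers the category $\mathcal{E}_d$ whose objects are pairs $(c,\,x)$ with $c$ an object of $\mathcal{C}$ and $x\colon F(c)\to d$ a weak equivalence, the morphisms being weak equivalences $c\to c'$ compatible with the structure maps down to $d$; using hypothesis (3) to produce a factorization and hypotheses (2) and 2-out-of-3 to force the relevant maps to be weak equivalences, one shows $\mathcal{E}_d$ is nonempty and that any two of its objects are joined by a zig-zag, while the cylinder functor from hypothesis (1) supplies the explicit homotopies that contract $N(\mathcal{E}_d)$. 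One also checks that $d\mapsto\mathcal{E}_d$ is sufficiently functorial for Theorem A to apply, giving that $N(w\mathcal{C})\to N(w\mathcal{D})$ is a weak equivalence.

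Next I would verify that hypotheses (1)--(3) are inherited by the $S_n$-construction, so that the single-level statement applies to $S_nF\colon S_n\mathcal{C}\to S_n\mathcal{D}$ for every $n\ge 0$. The cylinder functor on $S_n\mathcal{C}$ is obtained by applying the given one objectwise to a staircase diagram, and the cylinder axioms are checked entrywise. For hypotheses (2) and (3) one argues by a step-by-step extension along the canonical filtration of an object of $S_n\mathcal{D}$ by its sub-staircases: at each stage a partially defined lift together with its weak equivalence down to $\mathcal{D}$ is extended one more step using hypothesis (3) for $\mathcal{C}$, and the cylinder functor is used to restore cofibrancy of the newly added map, the gluing and cobase-change axioms of a Waldhausen category keeping the construction inside $S_n\mathcal{C}$.

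Combining the two points gives that $wS_n\mathcal{C}\to wS_n\mathcal{D}$ is a weak homotopy equivalence for every $n$; the realization lemma for (bi)simplicial sets then yields that $|wS_\bullet\mathcal{C}|\to|wS_\bullet\mathcal{D}|$ is a weak equivalence, and hence so is $K(F)\colon K(\mathcal{C})\to K(\mathcal{D})$ after looping and iterating $S_\bullet$. The main obstacle is the second point---pushing the approximation property (3) through the $S_\bullet$-construction---because (3) is not self-dual and the extension must be organized carefully around the functoriality of the cylinder; the contractibility argument in the first point is the other delicate step and, like the extension argument, breaks down without hypothesis (1), which is precisely why the cylinder axiom cannot be dropped.
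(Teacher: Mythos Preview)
The paper does not prove this theorem at all: it is quoted from Waldhausen's original paper \cite{W1} and the statement is closed with a \qed, so there is no in-paper argument to compare against. Your outline is a faithful sketch of Waldhausen's own proof (the contractibility of the comma categories via the cylinder, propagation of the approximation hypotheses through $S_n$, and the realization lemma), which is more than the paper itself supplies.
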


\begin{lemma}
\label{clynder} Those triangulated categories in Definition~\ref{DefTT}
 admit cylinder objects.  
\end{lemma}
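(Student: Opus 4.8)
The goal is to show that the three $K$-theory spectra in Definition~\ref{DefTT}—built from $\mathrm{Perf}(A)$ with almost quasi-isomorphisms as weak equivalences, and from the subcategory $\mathrm{Perf}^{\mathfrak{m}}(A)$ of almost acyclic perfect complexes—admit cylinder objects in the sense required by Waldhausen's approximation theorem (Theorem~\ref{W-app}). Since all three categories are full subcategories of (or coincide with) $\mathrm{Perf}(A)$ with the ordinary Waldhausen structure inherited from the category of chain complexes of $A$-modules, the plan is to exhibit the usual mapping cylinder construction and check that it stays inside each subcategory.

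\textbf{Step 1: the mapping cylinder functor.} For a morphism $f: E \to E'$ of $A$-complexes, I would take the standard mapping cylinder $\mathrm{Cyl}(f)$, i.e. the pushout of $E' \leftarrow E \to E \otimes_A \mathrm{(cone\ of\ }A \xrightarrow{\mathrm{id}} A)$, equivalently the complex $E' \oplus E[1] \oplus E$ with the usual twisted differential, together with the factorization $E \hookrightarrow \mathrm{Cyl}(f) \xrightarrow{\sim} E'$ into a cofibration followed by a chain homotopy equivalence. This construction is functorial in $f$ and is the content of Waldhausen's cylinder axiom; the front inclusion $E \to \mathrm{Cyl}(f)$ and the back inclusion $E' \to \mathrm{Cyl}(f)$ are cofibrations (degreewise split monomorphisms with perfect cokernel), and the projection $\mathrm{Cyl}(f) \to E'$ is a chain homotopy equivalence, hence in particular a quasi-isomorphism and a fortiori an almost quasi-isomorphism.

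\textbf{Step 2: closure inside each subcategory.} For $K^{\rm aTT}(A)$ the underlying category is $\mathrm{Perf}(A)$ itself, so $\mathrm{Cyl}(f)$ is perfect as soon as $E$ and $E'$ are—it is an iterated extension of $E$, $E[1]$, $E'$—and there is nothing further to check. For $K^{\rm TT}(A^{\mathfrak{m}})$ the underlying category is $\mathrm{Perf}^{\mathfrak{m}}(A)$, the perfect complexes $E$ with $\tm \otimes_V E$ acyclic; here I must verify that $\tm \otimes_V \mathrm{Cyl}(f)$ is acyclic whenever $\tm \otimes_V E$ and $\tm \otimes_V E'$ are. Because $\tm$ is flat over $V$ (our standing assumption) and $\mathrm{Cyl}(f)$ is a finite extension of $E$, $E[1]$, $E'$, tensoring with $\tm$ commutes with these extensions and with shifts, so $\tm \otimes_V \mathrm{Cyl}(f)$ sits in a distinguished triangle built from $\tm \otimes_V E$ and $\tm \otimes_V E'$ and is therefore acyclic by the long exact sequence in homology. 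The Cylinder Axiom (the compatibility $\mathrm{Cyl}(f) \to \mathrm{Cyl}(g)$ for composable weak equivalences, and the behaviour of cofibrations) then follows from the corresponding facts in the ambient category of $A$-complexes.

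\textbf{Main obstacle.} The only genuinely delicate point is the second case: one must be sure that membership in $\mathrm{Perf}^{\mathfrak{m}}(A)$ is preserved, and this is exactly where flatness of $\tm$ is used—without it, $\tm \otimes_V (-)$ would not send the defining distinguished triangles of the cylinder to distinguished triangles and acyclicity could fail. Granting flatness, the verification is routine: the cylinder of a morphism between almost acyclic perfect complexes is again an almost acyclic perfect complex, and the same mapping-cylinder functor serves all three Waldhausen categories simultaneously, so the lemma follows.
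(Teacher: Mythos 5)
Your proposal is correct and follows essentially the same route as the paper: both use the standard mapping cylinder of chain complexes and observe that, since $\tm \otimes_V (-)$ is exact (by flatness) and commutes with shifts and cones, the cylinder construction stays inside each of the relevant subcategories, in particular inside the category of almost acyclic perfect complexes. Your Step 2 simply makes explicit the membership checks that the paper's one-line proof leaves implicit.
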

\begin{proof}
Since $\mathfrak{m} \otimes_V -$ and $-\otimes_V \mathfrak{m}$ are exact
and commute with shift and the mapping cone functor, the all of
$\mathrm{Perf}(A),\,\mathrm{APerf}(A)$, and $\mathrm{Perf}^{\rm al}(A)$, admit the usual mapping cylinder complexes
$\mathrm{cyl}(f:E \to F)= \mathrm{cone}(g: \mathrm{cone}f[-1] \to E) $
satisfying the cylinder axioms. \qed
\end{proof}

The following theorem holds without the flatness of $\mathfrak{m}$:
\begin{theorem}
\label{TheoremAlmostK} Assume that $\tm$ is a flat $V$-module. Then the
functors $ \mathrm{Perf}(A_*) \to \mathrm{Perf}^{\rm al}(A) \to
\mathrm{APerf}(A)$ induces weak equivalences
\[
K(\tm \otimes_V (-)) : K^{\rm aTT}(A_*) \to  K(\mathrm{Perf}^{\rm al}(A)) 
 \to  K^{\rm al}(A) 
\]
of $K$-theory spectra. 
\end{theorem}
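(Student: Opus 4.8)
The plan is to verify the three hypotheses of the Approximation theorem (Theorem~\ref{W-app}) for each of the two exact functors in the composition, and then conclude by functoriality of Waldhausen $K$-theory. By Lemma~\ref{clynder} all three Waldhausen categories $\mathrm{Perf}(A_*)$, $\mathrm{Perf}^{\rm al}(A)$, and $\mathrm{APerf}(A)$ admit cylinder objects, so condition~(1) is immediate in both cases; the 2-out-of-3 property for almost quasi-isomorphisms (resp. quasi-isomorphisms) follows since $\tm\otimes_V(-)$ is exact and the usual quasi-isomorphisms satisfy 2-out-of-3. The weak equivalences on $\mathrm{Perf}(A_*)$ are the almost quasi-isomorphisms (this is $K^{\rm aTT}(A_*)$), on $\mathrm{Perf}^{\rm al}(A)$ and $\mathrm{APerf}(A)$ they are the morphisms inverted by $\tm\otimes_V(-)$, i.e. those whose cone becomes acyclic after applying $\tm\otimes_V(-)$ (Proposition~\ref{almost-qis}).

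For condition~(2): a morphism $f$ in $\mathrm{Perf}(A_*)$ is an almost quasi-isomorphism exactly when $\tm\otimes_V f$ is a quasi-isomorphism, which is by definition the condition that its image in $\mathrm{APerf}(A)$ (and in $\mathrm{Perf}^{\rm al}(A)$) is a weak equivalence; so~(2) holds for both functors essentially tautologically, using that $\tm\otimes_V(-)$ detects quasi-isomorphisms on perfect complexes built from $\tm\otimes_V A$ by Propositions~\ref{firm-compact} and~\ref{firm-perfect}. For condition~(3), the coCartesian (lifting) property: given an object $E$ of the source and a map $x\colon F(E)\to D$ in the target, I would factor $x$ through a mapping cylinder. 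Concretely, since $\mathrm{APerf}(A)$ (resp. $\mathrm{Perf}^{\rm al}(A)$) is the full triangulated subcategory generated by $\tm\otimes_V A$ inside $D(\FMod_A)$ (resp. the pullback), every object $D$ in the target is, up to weak equivalence, the image of a perfect $A_*$-complex (essential surjectivity, Proposition~\ref{firm-perfect}); one then lifts the map $x$ along this equivalence using that $\tm\otimes_V A$ is compact (Proposition~\ref{firm-compact}), so that maps out of it in the derived category are detected at the level of complexes, and replaces the resulting map by a cofibration into a cylinder to produce $a\colon E\to E'$ and a weak equivalence $x'\colon F(E')\to D$ with $x=x'\circ F(a)$.

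Once~(1)--(3) are checked, Theorem~\ref{W-app} gives that $K(\tm\otimes_V(-))\colon K^{\rm aTT}(A_*)\to K(\mathrm{Perf}^{\rm al}(A))$ and $K(\mathrm{Perf}^{\rm al}(A))\to K^{\rm al}(A)$ are both weak equivalences of spectra, and composing yields the claim. The flatness of $\tm$ is used to ensure $\tm\otimes_V(-)$ is exact (so that it is a genuinely exact functor of Waldhausen categories, commuting with cones and shifts as in Lemma~\ref{clynder}) and that the functors on derived categories behave well.

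The main obstacle I expect is condition~(3), the coCartesian-fibration hypothesis: one must promote the \emph{essential} surjectivity of $\tm\otimes_V(-)$ on triangulated categories to an honest factorization $x=x'\circ F(a)$ at the level of Waldhausen categories, i.e. arrange the lift so that $a$ is a genuine map of complexes (not merely a map in the derived category) and $x'$ a weak equivalence. This requires a careful model-level argument: replacing $E$ by a cofibrant representative, using compactness of $\tm\otimes_V A$ to realize the derived map $x$ by an actual chain map after a cofibrant/fibrant replacement, and then invoking the mapping cylinder from Lemma~\ref{clynder} to split $x$ as a cofibration followed by a weak equivalence. The subtlety is that $\mathrm{Perf}^{\rm al}(A)$ is defined as a pullback of triangulated categories, so one must check that the lifted cylinder object indeed lands in this pullback, i.e. that its image in $D(\FMod_A)$ lies in $\mathrm{APerf}(A)$ — which follows because cylinders are built from cones and $\mathrm{APerf}(A)$ is triangulated — but this bookkeeping is where the argument is most delicate.
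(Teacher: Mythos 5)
Your overall strategy coincides with the paper's: verify the hypotheses of the Approximation Theorem (Theorem~\ref{W-app}) for each of the two functors, using Lemma~\ref{clynder} for cylinders and the definition of almost quasi-isomorphisms for condition~(2). For the second functor $\mathrm{Perf}^{\rm al}(A)\to\mathrm{APerf}(A)$ your treatment is also essentially the paper's (condition~(3) is immediate from the pullback definition of $\mathrm{Perf}^{\rm al}(A)$).

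However, there is a genuine gap in your verification of condition~(3) for the first functor $\tm\otimes_V(-)\colon\mathrm{Perf}(A_*)\to\mathrm{Perf}^{\rm al}(A)$, and it is precisely at the point you flag as the main obstacle. The difficulty is not, as you suggest, promoting a derived-category map to a chain-level map via compactness and cylinders; it is that $\tm\otimes_V(-)$ is not full, so essential surjectivity (Proposition~\ref{firm-perfect}) does not by itself let you descend a map $x\colon\tm\otimes_V E\to D$ to a map $a\colon E\to E'$ in $\mathrm{Perf}(A_*)$. Compactness of $\tm\otimes_V A$ controls filtered colimits and is irrelevant to this lifting problem. The paper's mechanism, which your proposal is missing, is the adjunction $\tm\otimes_V(-)\dashv\Hom_V(\tm,-)$ together with Lemma~\ref{PMod-closed}: given $x\colon\tm\otimes_V E\to D$ with $D$ firm, set $E'=\Hom_V(\tm,D)$, which is a perfect $A_*$-complex with $\tm\otimes_V E'\simeq\tm\otimes_V D\simeq D$; since perfect $A_*$-complexes are closed, the adjoint of $x$ gives an honest chain map $a\colon E\simeq\Hom_V(\tm,\tm\otimes_V E)\to E'$, and the counit $x'\colon\tm\otimes_V E'\to D$ is a weak equivalence with $x=x'\circ(\tm\otimes_V a)$. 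Without this step (or an equivalent substitute) the factorization required by condition~(3) is not established, and the claimed equivalence $K^{\rm aTT}(A_*)\to K^{\rm al}(A)$ does not follow.
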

\begin{proof} 
Clearly, $\tm \otimes_V (-)$ is exact and sends all almost
quasi-isomorphisms to quasi-isomorphism of almost perfect complexes. By
definition of almost quasi-isomorphisms, the condition (2) in
Theorem~\ref{W-app} is satisfied. By the definition of
$\mathrm{Perf}^{\rm al}(A)$ and $\mathrm{APerf}(A)$, the condition (3)
in Theorem~\ref{W-app} holds, implying that the canonical morphism $
K(\mathrm{Perf}^{\rm al}(A)) \to K^{\rm al}(A)$ is an equivalence.  Let
$E$ be a perfect $A$-complex and $F$ be a firm almost perfect
complex. Given an morphism $f:\tm \otimes_V E \to F \simeq \tm \otimes_V
F $, we obtain a morphism $\Hom(\tm,\,f): \Hom_V(\tm,\,E ) \to
\Hom_V(\tm,\,F )$. Then $ \Hom_V(\tm,\,E )$ and $\Hom_V(\tm,\,F )$ are
perfect complexes of $A_*$-modules. Therefore, $ K^{\rm aTT}(A_*) \to
K^{\rm al}(A) $ is also an equivalence.
\qed
\end{proof}


\subsection{Splitting of almost $K$-theory}

In most cases, $\mathfrak{m} \otimes_V A$ or $A/\mathfrak{m}A$ is not
finitely presented over $A$, in particular, being not an object of
$\mathrm{Perf}(A)$. Let $\mathrm{Perf}^+(A)$
denote the
stable full subcategory of the triangulated category $D(\Mod_A)$
generated by $A$ and $\tm \otimes_V A$
and $K^{+}(A)$
denote the $K$-theory spectrum of
$\mathrm{Perf}^+(A)$.  
Then the functor $\tm \otimes_V (-): \mathrm{Perf}^+(A) \to \mathrm{Perf}^+(A)$ is categorically idempotent, inducing a homotopy projector $K(\tm \otimes_V
(-) ) : K^{+}(A) \to K^+(A)$. Furthermore, the essential image of $\tm \otimes_V (-): \mathrm{Perf}^+(A) \to \mathrm{Perf}^+(A)$ is equivalent
to $ \mathrm{APerf}(A)$. Thus, the canonical map $K^{\rm al}(A) \to
K^+(A)$ is homotopically split. We obtain the following:
\begin{theorem}
\label{split} Let $A$ be an almost $V$-algebra. Assume that $\tm$ is a flat $V$-module.  Then one has the following weak equivalences:
\[
  K^+(A) \simeq K^{\rm al}(A) \oplus K( \mathrm{Perf}^+(A)^\mathfrak{m}). 
\]
\qed 
\end{theorem}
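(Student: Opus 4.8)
The plan is to realize the splitting as a consequence of a Bousfield-type localization argument on the $K$-theory of $\mathrm{Perf}^+(A)$. First I would make precise the claim that $\tm \otimes_V (-)$ is categorically idempotent on $\mathrm{Perf}^+(A)$: by Corollary~\ref{I-tilde} the natural transformation $\mu_\tm \otimes \tm$ is an isomorphism, so $\tm \otimes_V \tm \otimes_V (-) \xrightarrow{\sim} \tm \otimes_V (-)$, and since $\tm$ is flat this functor is exact and preserves the generators $A$ and $\tm \otimes_V A$ (the latter goes to $\tm \otimes_V \tm \otimes_V A \simeq \tm \otimes_V A$), hence restricts to an exact endofunctor $T$ of $\mathrm{Perf}^+(A)$ with $T \circ T \simeq T$. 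Applying the $K$-theory functor, which is itself idempotent-respecting, yields a homotopy idempotent self-map $e = K(T): K^+(A) \to K^+(A)$.

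Next I would identify the two summands. The essential image of $T$ on $\mathrm{Perf}^+(A)$ is the triangulated subcategory generated by $\tm \otimes_V A$ inside $D(\FMod_A)$, which is exactly $\mathrm{APerf}(A)$ by definition; so $e$ factors through $K^{\rm al}(A)$ and, by Theorem~\ref{TheoremAlmostK} (or directly from the definition of $K^{\rm al}$), the induced map $K^{\rm al}(A) \to K^+(A)$ is a section of the projection onto the image, making $K^{\rm al}(A)$ a homotopy retract of $K^+(A)$. For the complementary factor, I would observe that the fiber (equivalently, cofiber) of $e$ is the $K$-theory of the Verdier quotient $\mathrm{Perf}^+(A) / \mathrm{APerf}(A)$, which by construction is generated by the image of $A$ with $\tm \otimes_V A$ killed — this is precisely the full subcategory $\mathrm{Perf}^+(A)^{\mathfrak m}$ of objects that become almost acyclic (i.e. annihilated by $\tm \otimes_V -$). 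Here I would invoke a localization theorem of Thomason--Trobaugh / Neeman type, or more simply the fact that a homotopy idempotent on a spectrum splits it as a direct sum of its image and the cofiber (the spectrum-level Karoubi splitting, valid since module spectra over the sphere are idempotent-complete).

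The remaining task is to assemble: a homotopy idempotent endomorphism $e$ of the connective (or non-connective) $K$-theory spectrum $K^+(A)$ canonically determines an equivalence $K^+(A) \simeq \mathrm{im}(e) \oplus \mathrm{im}(\mathrm{id} - e)$, and by the two identifications above $\mathrm{im}(e) \simeq K^{\rm al}(A)$ while $\mathrm{im}(\mathrm{id}-e) \simeq K(\mathrm{Perf}^+(A)^{\mathfrak m})$. Concretely I would phrase this via the Waldhausen localization sequence $K(\mathrm{APerf}(A)) \to K^+(A) \to K(\mathrm{Perf}^+(A)/\mathrm{APerf}(A))$ together with the splitting provided by $e$, using Lemma~\ref{clynder} to know the relevant Waldhausen categories have cylinder functors so that the localization and approximation machinery applies.

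I expect the main obstacle to be the verification that the Verdier quotient $\mathrm{Perf}^+(A)/\mathrm{APerf}(A)$ is genuinely (equivalent to) the category $\mathrm{Perf}^+(A)^{\mathfrak m}$ of almost acyclic objects, together with the idempotent-completeness needed to make both the Karoubi splitting of the spectrum and the identification of the quotient with a subcategory work cleanly; the exactness and idempotency of $T$ are routine given Corollary~\ref{I-tilde} and flatness of $\tm$, but controlling the quotient category — in particular checking it is idempotent complete and that the localization sequence does not lose a summand to $K_{-1}$ obstructions — is where care is required, and is presumably why the statement carries the flatness hypothesis explicitly.
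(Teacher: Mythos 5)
Your first half --- exactness and idempotency of $T=\tm\otimes_V(-)$ on $\mathrm{Perf}^+(A)$ via Corollary~\ref{I-tilde} and flatness, the identification of its essential image with $\mathrm{APerf}(A)$, and the resulting homotopy projector $e=K(T)$ splitting $K^{\rm al}(A)$ off $K^+(A)$ --- is exactly the paper's argument. Where you diverge is in producing the complementary summand: you pass to the Verdier quotient $\mathrm{Perf}^+(A)/\mathrm{APerf}(A)$, invoke a Thomason--Trobaugh/Neeman localization theorem, and then must show this quotient is equivalent to the \emph{subcategory} $\mathrm{Perf}^+(A)^{\mathfrak m}$ of almost acyclic complexes, while controlling idempotent completeness and $K_0$-cofinality defects. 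The paper sidesteps the localization machinery entirely: it works with the explicit exact functor $\varphi(E)=\mathrm{cone}(\tm\otimes_V E\to E)$, which lands in $\mathrm{Perf}^+(A)^{\mathfrak m}$, is itself homotopy idempotent, restricts to the identity (up to quasi-isomorphism) on almost acyclic complexes, and annihilates the image of $T$; the cofiber sequence of exact functors $T\to\mathrm{id}\to\varphi$ together with the additivity theorem then gives $K(\mathrm{id})\simeq K(T)\oplus K(\varphi)$ on $K^+(A)$, which is the asserted decomposition (this is the content of the corollary immediately following the theorem). Your route is viable --- indeed the equivalence $\mathrm{Perf}^+(A)/\mathrm{APerf}(A)\simeq\mathrm{Perf}^+(A)^{\mathfrak m}$ that you flag as the main obstacle is proved precisely by observing that $\varphi$ descends to the quotient and is quasi-inverse to the natural comparison functor, so you would end up constructing $\varphi$ anyway --- but it imports exactly the technical burdens (idempotent completion of the quotient, possible corrections from negative $K$-theory) that the additivity argument avoids, and buys no extra generality here.
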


\begin{corollary}
The projection $:K^+(A) \to  K( \mathrm{Perf}^+(A)^\mathfrak{m})$ is induced by 
the functor 
\[
 \varphi:   E \mapsto \mathrm{cone}(\tm \otimes_V E \to E  ). 
\]
\end{corollary}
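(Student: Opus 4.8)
The plan is to identify the projection onto the summand $K(\mathrm{Perf}^+(A)^{\mathfrak{m}})$ in Theorem~\ref{split} with the map induced by the cofiber functor $\varphi$, by tracking how the homotopy projector $K(\tm \otimes_V (-))$ of that theorem decomposes $K^+(A)$. First I would recall that the splitting $K^+(A) \simeq K^{\rm al}(A) \oplus K(\mathrm{Perf}^+(A)^{\mathfrak{m}})$ arises because the endofunctor $T = \tm \otimes_V (-)$ on $\mathrm{Perf}^+(A)$ is categorically idempotent with essential image $\mathrm{APerf}(A)$, so on $K$-theory one obtains an idempotent $p = K(T)$ with $K^{\rm al}(A) = \mathrm{im}(p)$ and the complementary summand equal to the ``kernel'' of $p$, realized as the $K$-theory of the Verdier quotient $\mathrm{Perf}^+(A)/\mathrm{APerf}(A)$. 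The natural counit map $\tm \otimes_V E \to E$ fits into a distinguished triangle
\[
  \tm \otimes_V E \to E \to \mathrm{cone}(\tm \otimes_V E \to E) \to (\tm \otimes_V E)[1],
\]
so at the level of $K$-theory the class $[E]$ splits as $[\tm \otimes_V E] + [\varphi(E)]$; equivalently, on $K^+(A)$ one has $\mathrm{id} = p + K(\varphi)$ up to homotopy, where $K(\varphi)$ is the map induced by the exact functor $\varphi$.

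Next I would verify that $\varphi$ really is exact on $\mathrm{Perf}^+(A)$: since $\tm$ is flat, $\tm \otimes_V (-)$ is exact and commutes with shifts and cones (as used in Lemma~\ref{clynder}), so $\varphi(E) = \mathrm{cone}(\tm \otimes_V E \to E)$ is an exact functor, and it lands in the full subcategory of objects $E'$ with $\tm \otimes_V E' = 0$, i.e.\ in $\mathrm{Perf}^+(A)^{\mathfrak{m}}$, because $\tm \otimes_V \varphi(E) = \mathrm{cone}(\tm\otimes_V \tm \otimes_V E \to \tm \otimes_V E)$ is acyclic by Corollary~\ref{I-tilde} (the counit $\mu_{\tm \otimes_V E}$ is an isomorphism). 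Thus $K(\varphi): K^+(A) \to K(\mathrm{Perf}^+(A)^{\mathfrak{m}})$ is defined. It remains to check that this map agrees with the projection coming from the direct sum decomposition; this follows because the idempotent $p$ and the map $K(\varphi)$ are complementary ($p + K(\varphi) \simeq \mathrm{id}$, using the triangle above and additivity of $K$-theory on distinguished triangles), $p$ is precisely the projector onto $K^{\rm al}(A)$, and the complementary projector of a splitting is unique up to the canonical homotopies. Restricting $\mathrm{id} - p \simeq K(\varphi)$ to the decomposition then shows that $K(\varphi)$ kills $K^{\rm al}(A)$ and restricts to the identity on $K(\mathrm{Perf}^+(A)^{\mathfrak{m}})$, which is exactly the assertion that $\varphi$ induces the projection.

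The main obstacle I anticipate is bookkeeping the homotopy-coherence of ``$\mathrm{id} \simeq p + K(\varphi)$'': additivity of Waldhausen $K$-theory along the distinguished triangle $\tm \otimes_V E \to E \to \varphi(E)$ requires exhibiting this as a genuine cofiber sequence of exact endofunctors of $\mathrm{Perf}^+(A)$ (so that the additivity theorem applies, giving $K(\mathrm{id}_{\mathrm{Perf}^+(A)}) \simeq K(T) + K(\varphi)$ as maps of spectra), rather than merely a pointwise triangle; here one uses that the mapping cylinder/cone construction from Lemma~\ref{clynder} is functorial on $\mathrm{Perf}^+(A)$. Once that functorial cofiber sequence is in place, the rest is formal: the decomposition in Theorem~\ref{split} is induced by the idempotent $K(T)$, and $K(\varphi) = \mathrm{id} - K(T)$ is by construction the projection onto the complementary summand $K(\mathrm{Perf}^+(A)^{\mathfrak{m}})$.
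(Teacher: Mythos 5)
Your proposal is correct and follows essentially the same route as the paper's proof: both rest on the pointwise cofiber sequence $\tm \otimes_V E \to E \to \varphi(E)$, additivity giving $K(\tm \otimes_V(-)) \oplus K(\varphi) \simeq \mathrm{id}$ on $K^+(A)$, and the observations that $\varphi$ lands in $\mathrm{Perf}^+(A)^{\mathfrak{m}}$ (since $\tm \otimes_V \varphi(E)$ is acyclic by idempotence of $\tm$), kills firm complexes, and is naturally quasi-isomorphic to the identity on almost acyclic ones. The only cosmetic slip is writing ``$\tm \otimes_V E' = 0$'' where ``acyclic'' is meant, and your explicit flagging of the need for a functorial (not merely pointwise) cofiber sequence of exact endofunctors is a point the paper itself leaves implicit.
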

\begin{proof}
For any complex $E$, one has $\tm \otimes \varphi(E) \simeq
\mathrm{cone}(\tm \otimes_V E \to \tm \otimes_V E)$, being acyclic.  If
$E$ is almost acyclic, one has a chain of quasi-isomorphisms $\varphi(E)
\simeq \mathrm{cone}(0 \to E ) \simeq E$. Therefore $\varphi$ induces a
homotopic identity on $K(\mathrm{Perf}^+(A)^\mathfrak{m})$ and one has
$\varphi(\varphi(E)) \simeq \varphi(E)$. Furthermore, $\varphi(\tm
\otimes_V E)$ are quasi-isomorphic to an acyclic complex
$\mathrm{cone}(\tm \otimes_V E \to \tm \otimes_V E )$, the functor
$(\tm\otimes_V (-)) \oplus \varphi$ induces a homotopic identity
map. \qed
\end{proof}

Using Theorem~\ref{split}, we can obtain the almost Gersten property of almost $K$-theory:
\begin{theorem}
\label{AlmostGersten}
Assume that there is a field $F$ containing $V$. For any almost $V$-algebra $A$ and $A \otimes_V F$-algebra $C$, let $\overline{K^\mathfrak{m}}(C)$ denote the cofiber of the composition $K^{\rm al}(A) \to K^{+}(A) \to K(C)$ induced by  the functor $(-)\otimes_A C: \Mod_A \to \Mod_{C}$. Then we have a homotopical decomposition  \[
K(C) \simeq K^{\rm al}(A) \amalg \overline{K^\mathfrak{m}}(C) 
\]
of $K$-theory spectra. In particular, the pullback $K(-\otimes_V F):K^{\rm al}(A) \to K(A \otimes_V F)$ is homotopically split injective.  
\end{theorem}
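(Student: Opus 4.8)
The plan is to deduce Theorem~\ref{AlmostGersten} from Theorem~\ref{split} by base change. First I would observe that the functor $(-) \otimes_A C : \Mod_A \to \Mod_C$ is symmetric monoidal and right exact, hence sends $\mathrm{Perf}^+(A)$ into $\mathrm{Perf}^+(C)$ (where the latter is generated by $C$ and $\tm \otimes_V C \simeq (\tm \otimes_V A) \otimes_A C$) and carries the categorical idempotent $\tm \otimes_V(-)$ on $\mathrm{Perf}^+(A)$ to the corresponding idempotent on $\mathrm{Perf}^+(C)$. Therefore, applying $K$-theory, the homotopy projector $K(\tm \otimes_V(-))$ on $K^+(A)$ is compatible with the one on $K^+(C)$ via the map $K^+(A) \to K^+(C)$, so the splitting $K^+(A) \simeq K^{\rm al}(A) \oplus K(\mathrm{Perf}^+(A)^{\mathfrak m})$ of Theorem~\ref{split} maps to the analogous splitting of $K^+(C)$, and in particular the composite $K^{\rm al}(A) \to K^+(A) \to K(C)$ lands in the summand $K^{\rm al}(C)$ of $K(C)$ (using that $C$, being an $A \otimes_V F$-algebra hence containing the field $F$, is flat so that $\mathrm{Perf}^+(C) = \mathrm{Perf}(C)$ up to the acyclic part and $K^+(C) \simeq K(C)$; more precisely one uses that over a field every module is flat, so $\mathrm{Perf}^{\mathfrak m}$ considerations behave well and $K(C) \simeq K^+(C)$).

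Next I would make precise why $K^{\rm al}(A) \to K(C)$ is split injective. The key point is that the homotopy projector $e = K(\tm \otimes_V(-))$ acts on $K(C) \simeq K^+(C)$ with image $K^{\rm al}(C)$, and the map $K^{\rm al}(A) \to K(C)$ factors as $K^{\rm al}(A) \to K^{\rm al}(C) \hookrightarrow K(C)$ with the second map split (a direct factor, by Theorem~\ref{split} applied to $C$). But this only gives a retraction onto $K^{\rm al}(C)$, not onto $K^{\rm al}(A)$; to get the sharper statement I would instead argue directly with the idempotent. Consider the endofunctor $\varphi : E \mapsto \mathrm{cone}(\tm \otimes_V E \to E)$ on $\mathrm{Perf}^+(A)$ from the preceding Corollary; $(\tm \otimes_V(-)) \oplus \varphi$ induces a homotopy identity on $K^+(A)$, and both summands are compatible with $(-)\otimes_A C$. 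Define $\overline{K^{\mathfrak m}}(C)$ as the cofiber of $K^{\rm al}(A) \to K^+(A) \to K(C)$; since the first map is the inclusion of the $e$-summand of $K^+(A)$ and $K^+(A) \to K(C)$ is an equivalence onto $K^+(C)$ composed with $K^+(C) \to K(C)$ (the latter being an equivalence as $C$ is flat over a field), the cofiber splits off and we get $K(C) \simeq K^{\rm al}(A) \amalg \overline{K^{\mathfrak m}}(C)$ as desired, with the splitting of $K^{\rm al}(A) \to K(C)$ provided by the projection $K^+(A) \to K^{\rm al}(A)$ transported along $K^+(A) \simeq K^+(C) \simeq K(C)$.

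For the final assertion, I would specialize $C = A \otimes_V F$: the map $K(-\otimes_V F) : K^{\rm al}(A) \to K(A\otimes_V F)$ is exactly the composite $K^{\rm al}(A) \to K^+(A) \to K(C)$ above (the natural map $K^{\rm al}(A) \to K(A\otimes_V F)$ agreeing with the one through $K^+$ because $K^{\rm al}(A) \to K^+(A)$ is split by Theorem~\ref{split}), so it is split injective by the decomposition just established.

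The main obstacle, I expect, is verifying carefully that $K^+(C) \to K(C)$ is an equivalence — equivalently, that $K(\mathrm{Perf}^+(C)^{\mathfrak m})$ contributes the same $K$-theory whether or not one adjoins $\tm \otimes_V C$ — using the flatness of $C$ over $V$ coming from $F \subseteq C$; one must check that over a ring containing a field every module is flat so that $\tm \otimes_V C$ is already a perfect $C$-complex, or else argue that the almost-acyclic part is absorbed. A secondary subtlety is making the compatibility of the categorical idempotents with the lax symmetric monoidal base-change functor rigorous at the level of Waldhausen/$K$-theory spectra rather than just on homotopy categories, i.e.\ that $(-)\otimes_A C$ is genuinely exact on the relevant Waldhausen categories so that $K$-theory is functorial and intertwines the projectors; this is where the flatness hypothesis on $\tm$ and the field hypothesis are jointly used.
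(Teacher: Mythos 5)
There is a genuine gap at the decisive step of your argument. Your splitting of $K^{\rm al}(A)\to K(C)$ is ``provided by the projection $K^{+}(A)\to K^{\rm al}(A)$ transported along $K^{+}(A)\simeq K^{+}(C)\simeq K(C)$'', but $K^{+}(A)\to K^{+}(C)$ is only the map induced by the base change $-\otimes_{A}C$ and is not an equivalence (already for $A=V$ and $C=F$ the map $K(V)\to K(F)$ is not one in general). You had in fact correctly diagnosed the difficulty two sentences earlier --- naive compatibility of the idempotents only retracts $K(C)\simeq K^{+}(C)$ onto $K^{\rm al}(C)$, and the composite $K^{\rm al}(A)\to K^{\rm al}(C)$ is again a base-change map, not the identity --- but your proposed repair reinstates the same problem through a false equivalence, so no retraction $K(C)\to K^{\rm al}(A)$ is actually produced. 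The paper does not construct such a retraction: it identifies $\overline{K^{\mathfrak m}}(C)$ with the homotopy pushout $K(C)\amalg_{K^{+}(A)}K^{\mathfrak m}(A)$ using the decomposition $K^{+}(A)\simeq K^{\rm al}(A)\oplus K^{\mathfrak m}(A)$ of Theorem~\ref{split}, and then deduces $\overline{K^{\mathfrak m}}(C)\amalg K^{\rm al}(A)\simeq K(C)$ by a formal rearrangement of coproducts over $K^{+}(A)$; at no point is an equivalence $K^{+}(A)\simeq K(C)$ asserted or needed.

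A second, more local problem is your justification of $K^{+}(C)\simeq K(C)$. The claim that ``over a ring containing a field every module is flat'' is false (consider $k[x]/(x)$ over $k[x]$), and flatness is not the relevant property in any case. What makes $\mathrm{Perf}^{+}(C)=\mathrm{Perf}(C)$ is that the canonical map $\tm\otimes_{V}C\to C$ is an isomorphism: any nonzero element of $\mathfrak m$ becomes a unit in the field $F$, hence $\mathfrak m C=C$, so the extra generator $\tm\otimes_{V}C$ of $\mathrm{Perf}^{+}(C)$ is already isomorphic to the object $C$ of $\mathrm{Perf}(C)$. This is the opening line of the paper's proof and is precisely where the hypothesis that $F$ is a field containing $V$ enters; as written, your argument never uses that hypothesis in the form required.
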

\begin{proof}
Since the canonical morphism $\tm \otimes_V C \to C$ is an isomorphism,  the $\mathrm{Perf}(C) \to \mathrm{Perf}^+(C)$ functor is a categorical equivalence of triangulated categories. By definition of $\overline{K^\mathfrak{m}}(C)$, the $K$-theory spectrum $\overline{K^\mathfrak{m}}(C)$ is equivalent to the homotopy coCartesian product 
$K(C) \amalg_{K^+(A)} K^\mathfrak{m}(A)$. Therefore, by the formal argument, we have a chain of equivalences:
\begin{multline*}
 \overline{K^\mathfrak{m}}(C) \amalg K^{\rm al}(A) \simeq \left( K(C) \amalg_{K^+(A)} K^\mathfrak{m}(A) \right) \amalg K^{\rm al}(A) \\  \simeq   K(C) \amalg_{K^+(A)} \left(    K^\mathfrak{m}(A)  \amalg K^{\rm al}(A)    \right) \simeq  K(C) \amalg_{K^+(A)} K^+(A) \simeq K(C). 
\end{multline*} 
 \qed
\end{proof}

\subsection{The left adjoint of the localization $(-)^a: \CAlg(\Mod_A) \to \CAlg(\AMod_A)$ from  the non-unital algebra viewpoint}

We will review almost mathematics from the theory of non-unital algebra. 
We assume that $1 \not \in\mathfrak{m}$. Let $A$ be an almost
$V$-algebra and $B$ an $A$-algebra. Then $B_! :=\tm \otimes_V
\Hom_A(\tm,\,B)$ is a (non-unital) $B$-algebra. Consider the
unitalization functor
\[
V \oplus (-): \CAlg^{\rm nu}(\Mod_A) \to \CAlg(\Mod_A)
\]
Then the direct sum $V \oplus B_!$ has a
canonical unital ring structure defined by
\[
 (v,\,b) \cdot (v',\,b') = (vv',\, vb'+v'b+bb')  
\]
for $v, \, v' \in V$ and $b,\,b' \in B$. Note the functor $V \oplus
-:\CAlg^{\rm nu}(\Mod_A) \to \CAlg(\Mod_A) $ induces a categorical
equivalence between $\CAlg^{\rm nu}(\Mod_A)$ and the category of
augmented commutative rings $\CAlg(\Mod_A)_{/ V}$.  The right adjoint is
the augmented ideal functor $\mathrm{Ker} ( - \to V):\CAlg(\Mod_A)_{/ V}
\to\CAlg^{\rm nu}(\Mod_A)$, becoming the quasi-inverse of $V \oplus - $.

Let $B_{!!}$ denote the coCartesian product $V \oplus_{\tm} B_{!} $. Then one has an exact sequence
\[
 \tm \to V  \oplus   B_! \to B_{!!} \to 0,
\]
where the left map is almost injective.  After tensoring with $\tm$,
indeed, one has a trivial split exact sequence:
\[
   0 \to \tm \to \tm  \oplus  (\tm \otimes_V  B_!) \to \tm 
\otimes_{V} B_{!!} \to 0,
\]
implying that $\tm \otimes_V B_! \to \tm \otimes_{V} B_{!!}$ is an
isomorphism. Therefore the functor $(-)_{!!}:
\CAlg(\Mod_A) \to \CAlg(\AMod_{A_{!!}})$ is left adjoint to the
localization $(-)^a: \CAlg(\Mod_{A_{!!}}) \to \CAlg(\AMod_A)$.
If $\tm  \to V\oplus (\tm \otimes_V B_!)$ is exactly injective, $B$ is
called an {\it exact} almost $V$-algebra~\cite[Definition
2.2.27]{GR}. In the case $\tm \simeq \mathfrak{m}$, in particular, that
$\mathfrak{m}$ is flat, $B$ is always exact~\cite[Remark 2.2.28]{GR}.

\begin{remark}
 In Hovey's Smith ideal theory~\cite{Smith-ideals} language, a $V$-homomotphism $j:\tm \to V$ is regarded a Smith ideal of $V$. For any $A$-algebra $B$, the push-forward product 
\[
j    \Box (\eta_B :V \to B) = (V  \oplus_{\tm } ({\tm \otimes B} )      ) \to B
\]
is the canonical ring homomorphism $B_{!!} \to B$. Since $j$ induces an isomorphism $:\mathrm{id}_{\tm} \Box j \simeq \mathrm{id}_{\tm}$, the induced homomorphism
\[
 (\tm \otimes_{V} {B}_{!!} \to \tm \otimes_{V} B  ) =       \mathrm{id}_{\tm } \Box j \Box (V \to B) \simeq  \mathrm{id}_{\tm } \Box ( V \to B)=(\tm \otimes_{V} B \to \tm \otimes_{V} B  )
\]
is an isomorphism, 
entailing that the canonical ring homomorphism $B_{!!} \to B$ is an almost isomorphism of $A$-algebras. 
\end{remark}

\begin{lemma}
\label{A!!-projective} Let $E$ be an almost perfect $A_{!!}$-complex. Then $E$ is also an almost perfect $V \oplus \tm \otimes_V A$-complex.
\end{lemma}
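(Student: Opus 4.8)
The plan is to exploit the fact that the canonical ring homomorphism $A_{!!} \to A$ is an almost isomorphism, together with the observation that $B_{!!}$ and $V \oplus (\tm \otimes_V A)$ have the \emph{same} almost-world. Concretely, by the preceding remark the homomorphism $A_{!!} \to A$ of $A$-algebras is an almost isomorphism, hence after applying $\tm \otimes_V (-)$ it becomes an isomorphism; the same is true for the augmentation-type map $(V \oplus \tm \otimes_V A) \to A_{!!}$ obtained from the defining coCartesian square $A_{!!} = V \oplus_{\tm} A_!$ and the isomorphism $\tm \otimes_V A_! \simeq \tm \otimes_V A$ established just above. So I would first record that the ring maps
\[
 V \oplus (\tm \otimes_V A) \longrightarrow A_{!!} \longrightarrow A
\]
are all almost isomorphisms, and in particular induce isomorphisms after tensoring with $\tm$ over $V$.

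Next I would use the base-change/restriction adjunction along the ring homomorphism $\varphi: V \oplus (\tm \otimes_V A) \to A_{!!}$. Restriction of scalars $\varphi_* : \Mod_{A_{!!}} \to \Mod_{V \oplus \tm \otimes_V A}$ is exact and carries firm modules to firm modules (since firmness is detected after $\tm \otimes_V (-)$, and $\tm \otimes_V \varphi_* M \simeq \tm \otimes_V M$ because $\varphi$ is an almost isomorphism). Hence $\varphi_*$ sends $\AMod_{A_{!!}}$ to $\AMod_{V \oplus \tm \otimes_V A}$ and, by Lemma~\ref{clynder}-style compatibility with cones and shifts, induces an exact functor on derived categories commuting with $\tm \otimes_V (-)$. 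It therefore carries $\mathrm{APerf}(A_{!!})$ into $D(\FMod_{V\oplus \tm\otimes_V A})$, and the key point is that it carries the generator $\tm \otimes_V A_{!!}$ to $\tm \otimes_V (V \oplus \tm \otimes_V A) \simeq \tm \otimes_V A_{!!}$, which is exactly the generator of $\mathrm{APerf}(V \oplus \tm \otimes_V A)$ under the identification of almost categories. Since an exact functor preserving the generator preserves the generated thick subcategory, $\varphi_* E \in \mathrm{APerf}(V \oplus \tm \otimes_V A)$ whenever $E \in \mathrm{APerf}(A_{!!})$; and because $\tm \otimes_V E$ is unchanged, $E$ — viewed as an object of $D(\AMod_{V \oplus \tm \otimes_V A})$ via $\varphi_*$ — is an almost perfect $V \oplus \tm \otimes_V A$-complex.

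The last step is to tidy the identifications: the statement of the lemma implicitly identifies $\AMod_{A_{!!}}$ with $\AMod_{V \oplus \tm \otimes_V A}$ along $\varphi$, which is legitimate because $\varphi$ is an almost isomorphism of rings and hence $(-)^a \circ \varphi_*$ is an equivalence of almost-module categories (firmness and closedness being preserved, by Proposition~\ref{almost-film-closed}). Under this equivalence $\mathrm{Perf}^{\rm al}$ of the two rings correspond, and the lemma follows.

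The main obstacle I anticipate is not conceptual but bookkeeping: one must check carefully that restriction of scalars along an almost-isomorphism of rings genuinely induces an equivalence of \emph{derived} categories of firm modules and is compatible with the thick-subcategory generation, i.e. that "almost perfect" is transported correctly. The flatness hypothesis on $\tm$ (in force throughout the section) is what makes $\tm \otimes_V (-)$ exact and lets the derived-category statements go through; the rest is a diagram chase through the coCartesian square defining $A_{!!}$.
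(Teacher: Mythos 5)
Your key step is wrong: the ring map $\varphi\colon V \oplus (\tm \otimes_V A) \to A_{!!}$ is \emph{not} an almost isomorphism, and the two rings do not have ``the same almost-world.'' From the defining presentation $A_{!!} = \operatorname{coker}(\tm \to V \oplus A_!)$, the kernel of $\varphi$ is (up to almost isomorphism) a copy of $\tm$, and $\tm$ is not almost zero, since $\mathfrak{m}\tm = \tm$ by Corollary~\ref{I-tilde}. Concretely, writing $B = V \oplus (\tm \otimes_V A)$, one has $\tm \otimes_V B \simeq \tm \oplus (\tm \otimes_V A)$, whereas $\tm \otimes_V A_{!!} \simeq \tm \otimes_V A$; these differ by the summand $\tm$, so your claim that $\varphi_*$ carries the generator $\tm \otimes_V A_{!!}$ to something isomorphic to the generator $\tm \otimes_V B$ is false. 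The correct relationship is that $\AMod_B$ decomposes as a product with $\AMod_A$ as a \emph{direct factor} (this is exactly what produces the splitting $K^{\rm al}(V \oplus \tm \otimes_V A) \simeq K^{\rm al}(V) \oplus K^{\rm al}(A)$ of Corollary~\ref{KB}), not that the two almost categories are equivalent.

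The repair is small and is precisely the paper's argument: $\tm \otimes_V A_{!!} \simeq \tm \otimes_V A$ is a direct summand (retract) of $\tm \otimes_V B \simeq \tm \oplus (\tm \otimes_V A)$, and since $\mathrm{APerf}(B)$ is by convention idempotent complete (closed under retracts), the generator $\tm \otimes_V A_{!!}$ of $\mathrm{APerf}(A_{!!})$ already lies in $\mathrm{APerf}(B)$; hence the whole thick subcategory it generates does. Your restriction-of-scalars scaffolding (exactness, compatibility with $\tm \otimes_V (-)$, preservation of generated thick subcategories) is sound and can be kept, but the identification of generators must be replaced by this retract argument rather than an isomorphism, and the paragraph about transporting $\mathrm{Perf}^{\rm al}$ along an equivalence of almost categories should be dropped.
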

\begin{proof}
Write $B=V \oplus \tm \otimes_V A$. Then $\tm \otimes_V B= \tm \oplus \tm \otimes_V A$. Therefore $  \tm \otimes_V A \simeq \tm \otimes_V A_{!!}$ is a direct factor of $\tm \otimes_V B$. Since $\mathrm{APerf}(B)$ is closed under retracts, $\tm \otimes_V A_{!!}$ is an object of $\mathrm{APerf}(B)$.     
 \qed
\end{proof} 
\begin{corollary}
\label{KB}
For any almost $V$-algebra $A$, one has an decomposition: 
\[
 K^{\rm al}(V \oplus \tm \otimes_V A ) \simeq K^{\rm al}(V) \oplus K^{\rm al}(A).
\] 
\qed 
\end{corollary}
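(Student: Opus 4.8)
The plan is to derive the decomposition $K^{\rm al}(V \oplus \tm \otimes_V A) \simeq K^{\rm al}(V) \oplus K^{\rm al}(A)$ from Theorem~\ref{split} applied to the algebra $B := V \oplus \tm \otimes_V A$, together with the identification of the zero-section piece $K(\mathrm{Perf}^+(B)^\mathfrak{m})$. First I would observe that $B$ is an augmented $V$-algebra with augmentation ideal $\tm \otimes_V A$, so the base-change functor $-\otimes_V B : \Mod_V \to \Mod_B$ and the restriction-along-augmentation $V \to B$ realize $K^+(V)$ as a retract of $K^+(B)$. Concretely, the composite $V \to B \to V$ (unit followed by augmentation) is the identity, and on $K^+$-theory this yields a splitting $K^+(B) \simeq K^+(V) \oplus \overline{K}$ where $\overline{K}$ is the cofiber of $K^+(V) \to K^+(B)$. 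The task then reduces to matching $\overline{K}$, under the $K^{\rm al}\oplus K^\mathfrak{m}$ splitting of Theorem~\ref{split}, with $K^{\rm al}(A) \oplus (\text{something that cancels})$.

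The key computational step is to analyze $\mathrm{Perf}^+(B)$ using Lemma~\ref{A!!-projective} (which identifies almost perfect $A_{!!}$-complexes with almost perfect $B$-complexes, noting $\tm\otimes_V A \simeq \tm \otimes_V A_{!!}$ so $\mathrm{APerf}(B) \simeq \mathrm{APerf}(A)$) and the remark that $B_{!!} \to B$ is an almost isomorphism, hence $K^{\rm al}(B) \simeq K^{\rm al}(A)$ via $\tm\otimes_V (-)$ and Theorem~\ref{TheoremAlmostK}. By Theorem~\ref{split} one has $K^+(B) \simeq K^{\rm al}(B) \oplus K(\mathrm{Perf}^+(B)^\mathfrak{m}) \simeq K^{\rm al}(A) \oplus K(\mathrm{Perf}^+(B)^\mathfrak{m})$, and similarly $K^+(V) \simeq K^{\rm al}(V) \oplus K(\mathrm{Perf}^+(V)^\mathfrak{m})$. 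So I would show that the retraction $K^+(V) \to K^+(B) \to K^+(V)$ is compatible with both splittings: the $K^\mathfrak{m}$-summands match because an almost acyclic perfect $B$-complex, being annihilated by $\tm$, is supported on $B/\tm B = V \oplus 0 \cong V$ up to almost isomorphism, so $\mathrm{Perf}^+(B)^\mathfrak{m} \simeq \mathrm{Perf}^+(V)^\mathfrak{m}$ and $K(\mathrm{Perf}^+(B)^\mathfrak{m}) \simeq K(\mathrm{Perf}^+(V)^\mathfrak{m})$ via the augmentation. Combining, $\overline{K} \simeq K^{\rm al}(A)$, and $K^+(B) \simeq K^+(V) \oplus K^{\rm al}(A) \simeq K^{\rm al}(V) \oplus K(\mathrm{Perf}^+(V)^\mathfrak{m}) \oplus K^{\rm al}(A)$; subtracting the $K^\mathfrak{m}(B) \simeq K^\mathfrak{m}(V)$ summand dictated by Theorem~\ref{split} on the left-hand side leaves exactly $K^{\rm al}(B) \simeq K^{\rm al}(V) \oplus K^{\rm al}(A)$, which after the identification $K^{\rm al}(B)\simeq K^{\rm al}(V\oplus \tm\otimes_V A)$ (tautological, as $B$ \emph{is} that algebra) gives the claim.

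The main obstacle I anticipate is verifying cleanly that the three splittings in play — the augmentation splitting of $K^+(B)$, the $K^{\rm al}\oplus K^\mathfrak{m}$ splitting of Theorem~\ref{split} for $B$, and the same splitting for $V$ — are mutually compatible, i.e.\ that the idempotents $K(\tm\otimes_V(-))$ and $\varphi$ on $\mathrm{Perf}^+(B)$ commute with the base-change/augmentation idempotent up to coherent homotopy. Since all the functors involved ($\tm\otimes_V(-)$, $-\otimes_V B$, restriction along $B\to V$, and $\varphi(E)=\mathrm{cone}(\tm\otimes_V E \to E)$) are exact and built from tensor and cone operations that commute with one another, this should reduce to a diagram chase of categorical idempotents on the Waldhausen category $\mathrm{Perf}^+(B)$; the cleanest route is probably to exhibit a direct-sum decomposition $\mathrm{Perf}^+(B) \simeq \mathrm{APerf}(A) \oplus \mathrm{Perf}^+(V)$ of Waldhausen categories at the level of the underlying triangulated/stable $\infty$-categories, from which additivity of $K$-theory gives the result immediately. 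Establishing that orthogonal decomposition — in particular that a complex in $\mathrm{Perf}^+(B)$ splits functorially as (its image under $\tm\otimes_V(-)$, viewed in $\mathrm{APerf}(A)$) plus (its $\varphi$-image, viewed as a $V$-complex via the augmentation) — is the technical heart, and everything else is formal.
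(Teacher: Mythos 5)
There is a genuine error at the heart of your argument: the claim that $\mathrm{APerf}(B)\simeq\mathrm{APerf}(A)$ and hence $K^{\rm al}(B)\simeq K^{\rm al}(A)$ for $B=V\oplus\tm\otimes_V A$. This is false, and in fact contradicts the very statement you are proving (it would force $K^{\rm al}(V)\simeq 0$ for every $A$). Lemma~\ref{A!!-projective} only shows that $\tm\otimes_V A$ is a \emph{retract} of the generator $\tm\otimes_V B\simeq\tm\oplus(\tm\otimes_V A)$, i.e.\ it gives a fully faithful embedding $\mathrm{APerf}(A)\hookrightarrow\mathrm{APerf}(B)$, not an equivalence: the complementary summand $\tm$ generates a copy of $\mathrm{APerf}(V)$ inside $\mathrm{APerf}(B)$, and that copy is precisely where the $K^{\rm al}(V)$ summand of the corollary comes from. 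The remark you invoke about $B_{!!}\to B$ being an almost isomorphism does not apply: $A_{!!}$ is the pushout $V\oplus_{\tm}A_!$ (the quotient of $V\oplus A_!$ by the antidiagonal $\tm$), whereas $V\oplus\tm\otimes_V A$ is the plain direct sum, which retains an almost-nonzero copy of $V$. As a result your intermediate identity $K^+(B)\simeq K^{\rm al}(A)\oplus K(\mathrm{Perf}^+(B)^{\mathfrak m})$ is wrong, and the final ``subtraction'' step silently reinstates the missing $K^{\rm al}(V)$, so the proposal is internally inconsistent: it asserts both $K^{\rm al}(B)\simeq K^{\rm al}(A)$ and $K^{\rm al}(B)\simeq K^{\rm al}(V)\oplus K^{\rm al}(A)$.

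The repair is essentially the ``cleanest route'' you sketch in your last sentence, except that it should be applied to $\mathrm{APerf}$ rather than to $\mathrm{Perf}^+$, and it makes the entire detour through $K^+$, Theorem~\ref{split}, the augmentation retraction, and the matching of $K^{\mathfrak m}$-summands unnecessary. Since $\tm\otimes_V B\simeq\tm\oplus(\tm\otimes_V A)$ splits the generator of $\mathrm{APerf}(B)$ into the generators of $\mathrm{APerf}(V)$ and of $\mathrm{APerf}(A)$ (the latter by Lemma~\ref{A!!-projective}), one obtains a direct-sum decomposition $\mathrm{APerf}(B)\simeq\mathrm{APerf}(V)\oplus\mathrm{APerf}(A)$ of the generated triangulated subcategories, and additivity of Waldhausen $K$-theory gives the corollary in one step. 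What remains to check carefully in that route is the (semi)orthogonality of the two pieces as subcategories of $D(\FMod_B)$, which you correctly identify as the technical heart; but that is a statement about $\mathrm{APerf}(B)$, not about $\mathrm{Perf}^+(B)$.
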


Write $B=V \oplus \tm \otimes_V A$. 
Let $K^+(\tm \otimes_V A)$ denote the homotopy fiber of the map $K^+(-\otimes_B   V:K^+(B) \to K^+(V)$.   For any $V$-complex $E$, one has an isomorphism $E \to E \otimes_V B \otimes_B V $ induced by the identity composition $V \to B \to V$. Therefore the canonical map $K^+(B) \to K^+(V)$ has a homotopically section. 
Therefore we have a homotopically decomposition:
\[
 K^+( B) \simeq K^+(V) \oplus K^+(\tm \otimes_V A).
\]

By Corollary~\ref{KB}, one has a weak equivalence $K^{\rm al}(\tm
\otimes_V A) \simeq K^{\rm al}(A)$. Let
$\mathrm{Perf}^+(B,\, \tm \otimes_V A)$ denote the full subcategory of $\mathrm{Perf}(B)$ spanned by $V$-acyclic complexes.  That is. A $B$-complex $E$ is $V$-acyclic if $E \otimes_B V  $ is acyclic. Note that for
any $B$-complex $E$, $E \otimes_B (\tm \otimes_V A)$ is already firm, implying any $V$-acyclic complex is quasi-isomorphic to a firm complex. Therefore one has $K^+(B,\,\tm \otimes_V A)= K^{\rm al}(B,\,\tm \otimes_V A )$.   Furthermore, one has a canonical isomorphism: $E\otimes_B (\tm \otimes_V A) \simeq  \tm \otimes_V (E\otimes_B A_{!!})$, implying there is a canonical functor $(-) \otimes_B  A_{!!} \otimes_V \tm   :   \mathrm{Perf}^{+}(B,\, \tm \otimes_V A) \to \mathrm{APerf}(A_{!!})$. 
Further, we call a morphism $f:E \to E'$ of $B$-complexes {\it $-\otimes_{B}V$-quasi-isomorphism} if the induced morphism $f\otimes_B V:E \otimes_B V \to E' \otimes_B V$ is a quasi-isomorphism. 


The main result of almost $K$-theory is that the almost $K$-theory of $A$ is represented by the $K^+$-theory of the non-unital algebra $\tm \otimes_V A$:
\begin{theorem}
\label{K-ideal}
Let $A$ be an almost $V$-algebra. Let $K^+(\tm \otimes_V A)$ denote the homotopy fiber of the canonical map $ K^+(V \oplus \tm \otimes_V A) \to K^+(V)$. Then the $K$-theory spectrum $K^+(\tm \otimes_V A)$ is canonically equivalent to the almost $K$-theory $K^{\rm al}(A)$. 
\end{theorem}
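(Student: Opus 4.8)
The plan is to identify $K^+(\tm \otimes_V A)$ with $K^{\rm al}(A)$ by combining the direct-sum decomposition of Corollary~\ref{KB} with the splitting already established for the unitalization $B = V \oplus \tm \otimes_V A$. Concretely, Corollary~\ref{KB} gives $K^{\rm al}(B) \simeq K^{\rm al}(V) \oplus K^{\rm al}(A)$, and the retraction $V \to B \to V$ produces a compatible splitting on the $K^+$-level, so the homotopy fiber $K^+(\tm \otimes_V A)$ of $K^+(B) \to K^+(V)$ is, by definition, the complementary summand. So the real content is to show that this complementary summand is $K^{\rm al}(A)$ rather than some a priori larger piece involving $\mathrm{Perf}^+(B)^\mathfrak{m}$.

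First I would invoke Theorem~\ref{split} applied to $B$: $K^+(B) \simeq K^{\rm al}(B) \oplus K(\mathrm{Perf}^+(B)^\mathfrak{m})$, and similarly for $V$ (where $\tm \otimes_V V = \tm$). Next I would use the functor $(-)_{!!}$ and Lemma~\ref{A!!-projective}: since $A_{!!} = B$ in the notation of that lemma, an almost perfect $A_{!!}$-complex is the same as an almost perfect $B$-complex, and the canonical ring map $B_{!!} = B \to B$ matches the structure under consideration. The key identification is the one already recorded before the theorem statement: the subcategory $\mathrm{Perf}^+(B,\,\tm \otimes_V A)$ of $V$-acyclic perfect $B$-complexes consists of complexes quasi-isomorphic to firm ones, whence $K^+(B,\,\tm \otimes_V A) = K^{\rm al}(B,\,\tm \otimes_V A)$, and the functor $(-)\otimes_B A_{!!}\otimes_V \tm$ lands in $\mathrm{APerf}(A_{!!}) \simeq \mathrm{APerf}(A)$ via the canonical isomorphism $E \otimes_B (\tm \otimes_V A) \simeq \tm \otimes_V (E \otimes_B A_{!!})$. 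I would check this functor is an exact equivalence onto $\mathrm{APerf}(A)$ — essential surjectivity because every $\tm \otimes_V P$ arises from a finitely generated projective $B$-module, and full faithfulness on the $K$-theory level via the approximation theorem (Theorem~\ref{W-app}), exactly as in the proof of Theorem~\ref{TheoremAlmostK}.

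Assembling: the homotopy fiber of $K^+(B) \to K^+(V)$ is $K^+(B,\,\tm \otimes_V A)$ up to the splitting coming from $V \to B$, and by the previous paragraph $K^+(B,\,\tm \otimes_V A) = K^{\rm al}(B,\,\tm \otimes_V A) \simeq K^{\rm al}(A)$. Chasing the decompositions of Theorem~\ref{split} for $B$ and for $V$ through the retraction shows that the $K(\mathrm{Perf}^+(\cdot)^\mathfrak{m})$ pieces cancel — more precisely, the $V$-acyclic condition forces the relevant almost-acyclic summand of $\mathrm{Perf}^+(B)^\mathfrak{m}$ to reduce to that of $V$, which is killed by taking the fiber — leaving precisely $K^{\rm al}(A)$.

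The main obstacle I anticipate is the bookkeeping needed to show the $\mathrm{Perf}^+(B)^\mathfrak{m}$ contributions genuinely cancel in the fiber: one must verify that a $V$-acyclic perfect $B$-complex $E$ with $\tm \otimes_V E$ acyclic is already acyclic (so that the "extra" summand for the pair $(B,\,\tm\otimes_V A)$ is trivial), which uses that $E \otimes_B V$ acyclic together with $E \otimes_B (\tm \otimes_V A)$ acyclic forces $E$ itself acyclic — this is where the hypothesis $1 \notin \mathfrak{m}$ and the structure of $B = V \oplus \tm\otimes_V A$ as a square-zero-type extension must be used carefully. The rest is a formal diagram chase with the approximation theorem doing the heavy lifting, exactly parallel to Theorem~\ref{TheoremAlmostK}.
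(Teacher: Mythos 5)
Your proposal follows essentially the same route as the paper's proof: localize $\mathrm{Perf}^+(B)$ at the $-\otimes_B V$-quasi-isomorphisms, use the approximation theorem (Theorem~\ref{W-app}) to identify the fiber of $K^+(B)\to K^+(V)$ with the $K$-theory of the $V$-acyclic perfect $B$-complexes, observe that every $V$-acyclic perfect $B$-complex is quasi-isomorphic to a firm one so that this $K$-theory is $K^{\rm al}(\tm\otimes_V A)$, and finish with Corollary~\ref{KB}. Two remarks. First, your detour through Theorem~\ref{split} is unnecessary: the paper never decomposes $K^+(B)$ into almost and almost-acyclic summands and then cancels them against those of $K^+(V)$; it works directly with the Waldhausen category $\mathrm{Perf}^+(B,\,V)$, which makes the bookkeeping you worry about disappear. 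Second, and more substantively, the mechanism you propose for the step you flag as the main obstacle is not the right one: $B=V\oplus(\tm\otimes_V A)$ is not a square-zero-type extension --- the ideal $\tm\otimes_V A$ is idempotent and firm (since $\tm\otimes_V\tm\simeq\tm$), and a square-zero hypothesis would not give you what you need. It is precisely this idempotency/firmness that yields the decomposition $E\simeq(E\otimes_B V)\oplus(E\otimes_B(\tm\otimes_V A))$ for a perfect $B$-complex $E$, which is the paper's way of seeing both that a $V$-acyclic complex is firm and that a complex which is simultaneously $V$-acyclic and almost acyclic is acyclic. With that substitution in place of the square-zero reasoning, your argument closes and agrees with the paper's.
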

\begin{proof}
Let $\mathrm{Perf}^+(B,\,V)$ denote the triangulated category whose objects are the same of $\mathrm{Perf}^+(B)$ and weak equivalences are $-\otimes_{B}V$-quasi-isomorphisms. 
By applying Theorem~\ref{W-app} to the exact functor $ -\otimes_{B} V: \mathrm{Perf}^+(V) \to  \mathrm{Perf}^+(B,\,V) $, the induced map $:K^+(B,\,V) \to K^+(V)$ is a weak equivalence, entailing that the induced map: $ K^+(\tm \otimes_{V} A)\to   K^{+}(B,\,\tm \otimes_V A  )$ is also a weak equivalence: 
For any object $E'$ of $\mathrm{Perf}^+(V)$, indeed, the extension $E' \otimes_V B$ contains $E'$ as a direct factor. Therefore, $E'$ is also an object of $\mathrm{Perf}^+(B)$ and the inclusion $E' \to E' \otimes_V B \simeq E' \oplus (E' \otimes_{V} \tm \otimes_V A)$ a $-\otimes_{B}V $-quasi-isomorphism.

Furthermore, for any $B$-complex, one has an equivalence  $ E \simeq (E \otimes_B V) \oplus (E \otimes_{B} ( \tm \otimes_{V } A))$, implying that  
any $V$-acyclic complex is quasi-isomorphic to a firm complexes. Hence, we obtain a chain of equivalences: $  K^+(\tm \otimes_{V} A)   \simeq K^+(B,\, \tm \otimes_{V} A) \simeq  K^{\rm al }(B,\,\tm \otimes_V A) \simeq K^{\rm al }(\tm \otimes_V A)$. By Corollary~\ref{KB}, we get a weak equivalence: $K^{\rm al}(A) \simeq K^+(\tm \otimes_{V} A)$.
\qed
\end{proof}

\section{Finite syntomic algebraic cobordism of almost algebras}
\label{sec:Almost-MGL} Fix a base ring $V$ which is unital and
commutative and an almost $V$-algebra $A$.  Let $\mathfrak{m}$ be an
idempotent ideal of $V$. Following the previous section, this section assumes that $\tm$ is a flat $V$-module.

\subsection{Definition of finite syntomic motivic model structure of the category of simplicial presheaves}  
\label{sec:MSS} Let $\mathcal{X}$ be a Grothendieck site with an interval
object $I$. We assume that $\mathcal{X}$ has enough points: That is, a
morphism $f: X \to Y$ in $\mathcal{X}$ is an isomorphism if $f_x: X_x
\to Y_x$ is an isomorphism of sets for any point $x:* \to \mathcal{X}$
where the functor $(-)_x: \mathcal{X} \to \mathit{Sets}$ denotes the
right adjoint of the induced functor $x_*: \mathit{Sets} \to
\mathcal{X}$.  A simplicial object $U_\bullet : \Delta^{\rm op } \to
\mathcal{X}$ with an augmentation $\pi: U_\bullet \to X \in \mathcal{X}$
is a {\it hypercover} of $X$ if the following conditions are hold:
\begin{itemize}
\item For any $n \ge 0$, $U_\bullet ([n])$ is a
coproduct of compact objects represented by small objects of $\mathcal{X}$. 
\item The augmentation $\pi:
U_\bullet \to X$ is a stalk-wise trivial Kan fibration: That is, $\pi_x: U_{x,\,\bullet} \to *$ is a trivial Kan fibration for any point $x : *
\to X$.
\end{itemize}

The category $\sSet$ of simplicial set has a proper combinatorial
simplicial model structure, called {\it Kan--Quillen model
structure}.  Then the injective model structure of the category
$\sSet^{\mathcal{X}^{\rm op}}$ of simplicial presheaves on $\mathcal{X}$
is also proper combinatorial.  Let $\MS^\Delta_\mathcal{X}$ denote the
Bousfield localization of $\sSet^{\mathcal{X}^{\rm op}}$ defined as
follows: A simplicial presheaf $F$ is {\it motivic local} if $F$
satisfies the following conditions:
\begin{itemize}
 \item The presheaf $F$ is stalk-wise fibrant.  
 \item For any hypercover $\pi: U_\bullet \to X$ of $X \in
\mathcal{X}$, the induced
map $F(f): F( X) \to F(|U_\bullet | )$ is a weak equivalence. Here the
       functor $ |-|: \Fun( \Delta^{\rm op},\,
\mathcal{X}) \to \mathcal{X}$ denotes the geometric realization of
simplicial objects.
\item The canonical
map $\mathbf{1}: I \to *$ induces a weak equivalence $F(U ) \to F(U
\times I)$ for any $U \in \mathcal{X}$.
\end{itemize}
A map $f: F \to G$ of simplicial presheaves on
$\mathcal{X}$ is a {\it motivic equivalence} if the induced
map
\[ 
f^*:\Hom
(G,\,Z) \to \Hom  (F,\,Z)
\]   
is a weak homotopy equivalence of simplicial sets for each motivic local
presheaf $Z$.  By \cite[p.56, Corollary 4.55]{MR2771591}, the injective
model structure of $\sSet^{\mathcal{X}^{\rm op}}$ is proper
combinatorial and symmetric monoidal. Therefore the Bousfield
localization $\MS^\Delta_\mathcal{X}$ of
$\mathbf{sSet}^{\mathcal{X}^{\rm op}}$ is also proper.

We apply the finite syntomic site on the category $\Sch^{\rm fp}_{V}$ of finitely presented schemes over $\Spec{V}$, and the
interval object $\A^1_V$.  We let $\MS_{\rm FSyn}$ denote the
$\infty$-category determined by the simplicial model category $
\MS^\Delta_{\Sch^{\rm fp}_{V}}$ and we call objects of $\MS_{\rm FSyn}$
{\it (finite syntomic) motivic spaces}.  Moreover, $\MSp_{\rm FSyn}$
denotes the stable $\infty$-category of (finite syntomic) motivic
spectra defined by
\[
 \MSp_{\rm FSyn}=  \varprojlim ( \cdots \overset{\Omega_{\mathbb{P}_+^1}}{\to} (\MS_{\rm FSyn})_*  \overset{\Omega_{\mathbb{P}_+^1}}{\to}\ (\MS_{\rm FSyn})_*),
\] 
where $(\MS_{\rm FSyn})_*$ the $\infty$-category of pointed motivic
spaces and $\Omega_{\mathbb{P}_+^1}(- ) = \Map_{(\MS_{\rm
FSyn})_*}(\mathbb{P}^1_+,\, -)$ is the pointed $\mathbb{P}^1$-loop
functor. The stable $\infty$-category $\MSp_{\rm FSyn}$ is the full
subcategory of the model category $\mathrm{Spt}_{\mathbb{P}^1_+}(
\MS^\Delta_{\mathrm{FSyn}})$ spanned by
$\mathbb{P}_+^1$-stable fibrant objects. (See \cite[Section
10.2]{Jardine2}).

  

\begin{theorem}[\cite{K-MGL} Corollary 3.9]
 The localization $:L_{\rm FSyn}: \MSp \to \MSp$ by the family of
 zero-section stable finite syntomic surjective morphisms induces a categorical equivalence of stable $\infty$-categories $:L_{\rm FSyn}(\MSp) \to \MSp^{\rm
 FSyn}$.  \qed
\end{theorem}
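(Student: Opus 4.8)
The plan is to identify $L_{\rm FSyn}(\MSp)$ and $\MSp^{\rm FSyn}$ as the same reflective localization of $\MSp$ by comparing their classes of local objects. By construction $\MSp^{\rm FSyn}$ is obtained from $\MSp$ by imposing descent for finite syntomic hypercovers (the interval $\A^1$ and the $\mathbb{P}^1$-stabilization being already accounted for), so a motivic spectrum $E$ belongs to $\MSp^{\rm FSyn}$ exactly when $E(X) \to \varprojlim_n E(U_n)$ is an equivalence for every finite syntomic hypercover $U_\bullet \to X$ of an object $X$ of $\Sch^{\rm fp}_{V}$; whereas $L_{\rm FSyn}(\MSp)$ is the localization at the strongly saturated class generated by the maps $|\check{C}(f)_\bullet| \to X$ with $f\colon Y \to X$ a zero-section stable finite syntomic surjective morphism and $\check{C}(f)_\bullet$ its \v{C}ech nerve. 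One containment is immediate, since each such $\check{C}(f)_\bullet$ is the $0$-coskeletal hypercover attached to a finite syntomic cover, so every $\MSp^{\rm FSyn}$-local spectrum is $L_{\rm FSyn}$-local; the content is the reverse inclusion, that $L_{\rm FSyn}$-local spectra satisfy full finite syntomic hyperdescent.

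The first step would be to reduce hyperdescent to \v{C}ech descent. As every finite syntomic morphism is finite, every finite syntomic cover refines to a single finite syntomic surjection whose \v{C}ech nerve is level-wise finite over the base, so the finite syntomic site has covers of bounded complexity; by the Dugger--Hollander--Isaksen descent comparison, a presheaf of spectra that satisfies \v{C}ech descent for all finite syntomic surjections is then automatically a hypersheaf. Hence it suffices to upgrade $L_{\rm FSyn}$-locality to \v{C}ech descent for \emph{every} finite syntomic surjection, not merely for the zero-section stable ones.

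The decisive step is a refinement lemma: every finite syntomic surjection $f\colon Y \to X$ admits a refinement by a zero-section stable finite syntomic surjection. This rests on the local structure of finite syntomic morphisms --- Zariski-locally on $Y$ the map $f$ factors as a regular closed immersion into some $\A^n_X$ followed by the projection, and the regular sequence cutting out $Y$ distinguishes the origin, so these standard local models come with a zero section. Through the adjunction $\pi_* \circ i_* \dashv \pi^* \circ i^*$ of the introduction, the presence of such a section is precisely what forces the associated motivic space to be stalk-wise zero-section stable, and finite disjoint unions and Zariski gluings of zero-section stable covers remain zero-section stable; hence $f$ is refined by one. Because the finite syntomic topology is generated by any cofinal family of covers, \v{C}ech descent for zero-section stable finite syntomic surjections propagates to \v{C}ech descent for arbitrary finite syntomic surjections, and therefore --- by the reduction of the previous paragraph --- to finite syntomic hyperdescent. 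Together with the easy containment this yields $L_{\rm FSyn}(\MSp) \simeq \MSp^{\rm FSyn}$ as reflective localizations of $\MSp$, i.e. the asserted equivalence of stable $\infty$-categories.

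I expect the main obstacle to be this refinement lemma, and concretely the verification that the standard local models of a finite syntomic morphism are zero-section stable in the precise sense of the introduction --- that the unit $X \to \pi_*(i_*(X))$ is a stalk-wise weak equivalence --- and that zero-section stability is preserved under the disjoint unions and Zariski gluings used to globalize. This is the point at which the combinatorial $\Delta^\infty$-level description of the zero-section stabilization $\mathrm{Z}_0$ has to be matched against the geometry of finite syntomic covers, and it is essentially the technical heart of \cite{K-MGL}, Corollary 3.9, that is being invoked here.
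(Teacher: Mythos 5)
The paper does not actually prove this statement: it is imported verbatim from \cite{K-MGL} (Corollary 3.9) and stamped with \qed, so there is no internal argument to compare yours against. Judged on its own terms, your outline has the right global shape (exhibit both sides as reflective localizations of $\MSp$ and match the classes of local objects), but the step you yourself identify as decisive --- the refinement lemma --- is where the argument breaks, and for a reason more basic than a missing verification. You read ``zero-section stable'' as a geometric condition on a finite syntomic morphism (existence of a distinguished origin in the standard local models $Y \hookrightarrow \A^n_X \to X$). In this paper the term means something entirely different: it is a homotopical condition on a simplicial set (or motivic space), namely that the unit map $X \to \pi_*(i_*(X))$ for the adjunction attached to $\{0\} \to \Delta^\infty \to \Delta^0$ is a stalk-wise weak equivalence after applying $\Hom_S(\A^\bullet_S,-)$. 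Nothing about a regular closed immersion into $\A^n_X$ ``distinguishing the origin'' bears on that condition. Worse, if your refinement lemma were true, the qualifier ``zero-section stable'' in the theorem would be vacuous, and the localization would simply be at all finite syntomic surjections; but the rest of the paper treats zero-section stability as a genuinely restrictive hypothesis --- Theorem~\ref{limit-1} is only asserted for \emph{zero-section stable} integral perfectoid algebras, and its proof has to insert the stabilization functor $\mathrm{Z}_0$ and argue separately that $\varinjlim \Spec A_n$ is zero-section stable. So the lemma is not merely unproved; it is almost certainly false as stated, and the equivalence cannot be obtained by showing the two localizations have literally the same saturated class.

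Two smaller points. First, your reduction of hyperdescent to \v{C}ech descent via Dugger--Hollander--Isaksen is asserted, not argued: the fact that every finite syntomic cover is refined by a single finite surjection does not by itself bound cohomological dimension or force hypercompleteness of the finite syntomic topos, which is what that reduction actually requires. Second, the ``immediate'' containment also deserves care, because the theorem localizes at the \emph{morphisms} $Y \to X$ themselves (making them invertible in $L_{\rm FSyn}(\MSp)$), not at \v{C}ech-nerve augmentations $|\check{C}(f)_\bullet| \to X$; an $\MSp^{\rm FSyn}$-local object sees the latter as equivalences but has no reason to see the former as such. A correct proof has to engage with how inverting a finite syntomic surjection interacts with the zero-section stabilization $\mathrm{Z}_0$ --- which is the actual technical content of \cite{K-MGL} and is not reconstructed here.
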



\subsection{Some of almost homotopical algebra}
\label{sec-almostDerivedNakayama}

We will recall and prove some result of almost homotopical algebra. 
\begin{definition}
Let $A$ be an almost $V$-algebra. The {\it Jacobson radical} is defined
to be $\mathrm{Jac}(A)=\mathrm{Jac}(A_*)^a \subset A$.
\end{definition}
Clearly $I \to \mathrm{Jac}(A)$ is almost injective if and only if $ \tm
\otimes_V I \subset \mathrm{Jac}(A_*)$.

\begin{definition}
Let $A$ be an almost $V$-algebra and $I$ an ideal of $A$. We say that $I$ is
{\it tight} if there exists a finitely generated ideal $\mathfrak{m}_0
\subset \mathfrak{m}$ and an integer $n \ge 0$ such that $I^n \subset
\mathfrak{m}_0 A$.
\end{definition}

\begin{proposition}[\cite{GR} Corollary 5.1.17.]
 Let $A$ be  an almost $V$-algebra, $I$ a tight ideal of $A$ contained
 in the Jacobson radical of $A$.  Then $I_*$ is contained in the Jacobson
 radical of $A_*$.\qed
\end{proposition}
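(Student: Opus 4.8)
The plan is to reduce to the determinant (Nakayama) form of the problem and to track everything through the two adjoints $(-)^a$ and $(-)_*=\Hom_V(\tm,-)$. Since $I$ is an ideal of $A$, its closed realization $I_*=\Hom_V(\tm,I)$ is an ideal of $A_*$ (using that $\tm\otimes_V\tm\xrightarrow{\ \sim\ }\tm$, Corollary~\ref{I-tilde}, to make sense of multiplication of almost elements). Hence, by the standard description of the Jacobson radical of a commutative ring, it suffices to show that $1-\xi$ is a unit of $A_*$ for every $\xi\in I_*$. Fix such a $\xi$ and put $M:=A_*/(1-\xi)A_*$, a cyclic $A_*$-module on which $\xi$ acts as the identity (as $\xi\equiv 1$ modulo $(1-\xi)$). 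Applying the exact monoidal functor $(-)^a$ and using $A\cong(A_*)^a$, $I\cong(I_*)^a$ (Theorem~\ref{A}), we obtain $M^a\cong A/(1-\xi^a)A$, a cyclic almost $A$-module on which $\xi^a\in I$ acts as the identity; in particular $I\cdot M^a=M^a$.

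Now apply the almost Nakayama lemma: $M^a$ is cyclic, hence finitely generated, $I$ is a tight ideal contained in $\mathrm{Jac}(A)$, and $I\cdot M^a=M^a$, so $M^a=0$ (the almost analogue of Nakayama for almost finitely generated modules over a tight ideal in the radical, cf.\ \cite{GR}). Thus $M$ is almost zero, i.e.\ $\mathfrak{m}M=0$, so also $\mathfrak{m}_0 M=0$ for every $\mathfrak{m}_0\subseteq\mathfrak{m}$. Finally invoke tightness once more: fix a finitely generated $\mathfrak{m}_0\subseteq\mathfrak{m}$ and $n\ge 0$ with $I^n\subseteq\mathfrak{m}_0 A$. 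A product of $n$ almost elements of $I$ is an almost element of $I^n$ (again via Corollary~\ref{I-tilde}), so $\xi^n\in(I_*)^n\subseteq(I^n)_*\subseteq(\mathfrak{m}_0 A)_*$; and because $\mathfrak{m}_0$ is finitely generated one has $(\mathfrak{m}_0 A)_*=\mathfrak{m}_0 A_*$. Therefore $\xi^n M\subseteq\mathfrak{m}_0 A_*\,M=\mathfrak{m}_0 M=0$, whereas $\xi^n$ acts as the identity on $M$; hence $M=0$, i.e.\ $1-\xi\in A_*^{\times}$. Since $\xi\in I_*$ was arbitrary and $I_*$ is an ideal, this gives $I_*\subseteq\mathrm{Jac}(A_*)$.

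The main obstacle is the almost Nakayama lemma itself, which has to be set up and proved (or taken from \cite{GR}): one runs a Cayley--Hamilton/determinant argument for almost finitely generated modules, and the finite generation of $\mathfrak{m}_0$ in the definition of tightness is exactly what lets the determinant trick --- and the identity $(\mathfrak{m}_0 A)_*=\mathfrak{m}_0 A_*$ --- go through. A secondary point needing care is that $(-)_*$ is only left exact and does not commute with the formation of ideals or of powers on the nose, so the inclusions $(I_*)^n\subseteq(I^n)_*\subseteq(\mathfrak{m}_0 A)_*$ must be checked directly as indicated; once the almost Nakayama lemma is in hand, the remainder of the argument is this bookkeeping.
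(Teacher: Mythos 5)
The paper gives no argument here at all: the proposition is quoted from Gabber--Ramero (Corollary 5.1.17) and stamped with \qed, so the only comparison available is with your attempt itself. Your reduction is reasonable as far as it goes: it is legitimate to fix $\xi\in I_*$, form $M=A_*/(1-\xi)A_*$, observe that $M^a$ is a cyclic (hence almost finitely generated) almost module with $I\cdot M^a=M^a$, and invoke the almost Nakayama lemma (which the paper states next, again citing \cite{GR} Lemma 5.1.7, so no circularity) to conclude that $M$ is almost zero, i.e.\ $\mathfrak{m}M=0$.

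The genuine gap is in the last step, where you upgrade ``$M$ is almost zero'' to ``$M=0$''. This rests on the assertion $(\mathfrak{m}_0 A)_*=\mathfrak{m}_0 A_*$, which does not follow from $\mathfrak{m}_0$ being finitely generated: writing $\mathfrak{m}_0=(a_1,\dots,a_k)$, the inclusion $\mathfrak{m}_0 A_*\subseteq(\mathfrak{m}_0A)_*$ is clear, but the reverse inclusion amounts to lifting a map $\tm\to\mathfrak{m}_0A$ along the surjection $A^{\oplus k}\to\mathfrak{m}_0A$, and $(-)_*=\Hom_V(\tm,-)$ is only left exact --- $\tm$ is assumed flat in this paper, not projective, so the obstruction in $\Ext^1_V(\tm,\ker)$ need not vanish. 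Worse, the containments that \emph{are} formally available are vacuous for your purpose: since $\mathfrak{m}A\hookrightarrow A$ is an almost isomorphism and $(-)_*$ inverts almost isomorphisms, one has $(\mathfrak{m}A)_*=A_*$, so knowing $\xi^n\in(I^n)_*\subseteq(\mathfrak{m}_0A)_*$ only yields $\mathfrak{m}\,\xi^n\subseteq\mathfrak{m}_0A_*$, hence $\mathfrak{m}\,\xi^nM=0$ --- which merely re-derives $\mathfrak{m}M=0$ and does not force $\xi^n M=0$. In general a cyclic module $R/(1-\xi)R$ can be killed by $\mathfrak{m}$ without $1-\xi$ being a unit, so the passage from almost vanishing to honest vanishing is exactly the hard point; it must use that $I_*$ is a \emph{closed} ideal (or rerun a determinant-trick argument directly over $A_*$ with a finitely generated submodule, as Gabber--Ramero do), and your sketch does not supply that input.
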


\begin{lemma}[\cite{GR} Lemma 5.1.7.]
\label{almostNakayama} Let $A$ be an almost $V$-algebra, $I$ a tight
ideal of $A$ which is contained in the Jacobson radical of $A$. If $M$
is an almost finitely generated $A$-module satisfying $IM=M$, then
exactly $M=0$. \qed
\end{lemma}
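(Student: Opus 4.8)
The plan is to prove the almost Nakayama lemma (Lemma~\ref{almostNakayama}) by reducing it to the classical Nakayama lemma applied to the module $M_*=\Hom_V(\tm,\,M)$ over the unital ring $A_*$. First I would record the two structural facts already available: by the preceding proposition, the hypothesis that $I$ is tight and contained in $\mathrm{Jac}(A)$ gives that $I_*$ is contained in $\mathrm{Jac}(A_*)$; and by the definition of almost finitely generated together with Proposition~\ref{firm-compact} and the equivalence $\AMod_A\simeq\FMod_A$ of Theorem~\ref{A}, the module $\tm\otimes_V M$ is, up to almost isomorphism, a quotient of a finitely generated free $A_*$-module after applying $\Hom_V(\tm,\,-)$. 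The key point is thus to pass from the almost equality $IM=M$ to an honest equality of $A_*$-modules that the classical lemma can digest.

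Second, I would apply the functor $(-)_*=\Hom_V(\tm,\,-)$ to the identity $IM=M$. Since $\tm$ is flat, $(-)_*$ is exact (as noted after the Corollary following Theorem~\ref{A}), so it preserves the surjection $I\otimes_A M\to M$ and the inclusion $IM\subset M$; one gets $(I_*)(M_*)\subseteq M_*$ and, using that $(IM)_*=M_*$ together with the projection formula/compatibility $\tm\otimes_V(IM)\simeq (\tm\otimes_V I)(\tm\otimes_V M)$, the equality $I_* M_*=M_*$ as $A_*$-modules — here I would be careful that $I_*\subseteq A_*$ and $M_*$ is an honest $A_*$-module because $M_*$ is closed (this is where flatness of $\tm$ and the closedness results of Proposition~\ref{almost-film-closed} enter).

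Third, I would invoke that $M_*$ is a finitely generated $A_*$-module: this is exactly the content of $M$ being almost finitely generated, since the compactness criterion (\ref{compact}) for $\tm\otimes_V M$ transfers under the equivalences of Theorem~\ref{A} to finite generation of $\Hom_V(\tm,\,M)$ over $A_*$ — more precisely, one writes $M_*$ as an almost-isomorphic image of a finitely generated free module and uses that $M_*$ is closed (hence unaffected, up to isomorphism, by that almost isomorphism). With $M_*$ finitely generated over $A_*$, $I_*\subseteq\mathrm{Jac}(A_*)$, and $I_* M_*=M_*$, the classical Nakayama lemma yields $M_*=0$. Finally $M\simeq (\tm\otimes_V M_*)^a$... more directly, $M\simeq (M_*)^a=\Hom_V(\tm,\,M)^a$ since $M$ is an almost module, so $M_*=0$ forces $M=0$.

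The main obstacle I anticipate is the second step: rigorously producing the honest equality $I_* M_*=M_*$ of $A_*$-modules from the almost-world identity $IM=M$. One has to check that $(-)_*$ sends the multiplication map $I\otimes_A M\to M$ to the multiplication map $I_*\otimes_{A_*}M_*\to M_*$ up to the natural comparison morphisms, and that these comparison morphisms are isomorphisms — which again relies on flatness of $\tm$ and on $I$, $M$ being replaced by their firm/closed representatives. A secondary subtlety is making precise the "almost finitely generated $\Rightarrow$ $M_*$ finitely generated" implication: the cleanest route is to choose a finitely generated free $A_*$-module surjecting almost onto $\tm\otimes_V M$, apply $(-)_*$, and observe the target is $M_*$ on the nose while the source stays finitely generated because $A_*$ is unital; but one should confirm that $(-)_*$ of an almost surjection between closed modules is a genuine surjection, which follows from its right-exactness up to almost zero plus closedness killing the obstruction.
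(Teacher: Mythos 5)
The paper gives no proof of this lemma (it is quoted from Gabber--Ramero with the proof omitted), so the only question is whether your reduction is sound — and it has a genuine gap at its load-bearing step. Your third step asserts that almost finite generation of $M$ yields finite generation of $M_*=\Hom_V(\tm,\,M)$ over $A_*$, so that classical Nakayama applies to $M_*$. This is false. The definition in the paper (the map (\ref{compact}) being only \emph{almost} injective), like Gabber--Ramero's definition (for every finitely generated $\mathfrak{m}_0\subseteq\mathfrak{m}$ there is a finitely generated submodule $M_0$ with $\mathfrak{m}_0M\subseteq M_0$), does \emph{not} produce a single finitely generated $A_*$-module surjecting almost onto $M$: the number of generators of $M_0$ is allowed to grow as $\mathfrak{m}_0$ grows, and an almost finitely generated module need not be almost isomorphic to any finitely generated one. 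So the "finitely generated free $A_*$-module surjecting almost onto $\tm\otimes_V M$" that your cleanest route requires simply need not exist, and $M_*$ need not be a Noetherian-style object to which Nakayama applies.

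This is also exactly where the tightness of $I$ must enter, and your proposal uses tightness only to import $I_*\subseteq\mathrm{Jac}(A_*)$ — a warning sign, since the statement is false for non-tight ideals (e.g.\ $I=\mathfrak{m}A$ with $M=A$ gives $IM=M\neq 0$). The correct argument (Gabber--Ramero, Lemma 5.1.7) picks a finitely generated $\mathfrak{m}_0\subseteq\mathfrak{m}$ and $n$ with $I^n\subseteq\mathfrak{m}_0A$, uses $M=I^nM\subseteq\mathfrak{m}_0M$ to trap everything inside the finitely generated submodule $M_0\supseteq\mathfrak{m}_0M_*$ furnished by almost finite generation \emph{for that particular} $\mathfrak{m}_0$, and then applies the classical Nakayama/determinant trick to $M_0$ (not to $M_*$), concluding $\mathfrak{m}_0M=0$ and hence, varying $\mathfrak{m}_0$, $\mathfrak{m}M=M=0$. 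Your second-step worry (transporting $IM=M$ through $(-)_*$) is a real but repairable bookkeeping issue; the unrepaired defect is the appeal to Nakayama for the possibly non-finitely-generated module $M_*$.
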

\begin{corollary}[\cite{GR} Corollary 5.1.8.]
\label{almostlift} Let $A$ be an almost $V$-algebra, $I$ a tight ideal
of $A$ which is contained in the Jacobson radical of $A$. Let $f:M \to
N$ be a homomorphism of almost finitely generated projective
$A$-modules. If $f \otimes_A A/I: M/IM \to N/IN$ is an isomorphism. Then
$f$ is also an isomorphism.  \qed
\end{corollary}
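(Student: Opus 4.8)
The plan is to prove surjectivity and injectivity of $f$ separately, in each case reducing to the almost Nakayama lemma (Lemma~\ref{almostNakayama}), whose hypotheses are exactly the standing ones: $I$ is tight and contained in $\mathrm{Jac}(A)$.

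First I would treat surjectivity. Let $C$ be the cokernel of $f$ computed in $\AMod_A$. As a quotient of the almost finitely generated module $N$, it is itself almost finitely generated, and right exactness of $(-)\otimes_A A/I$ gives $C\otimes_A A/I \simeq \mathrm{Coker}(f\otimes_A A/I)=0$ since $f\otimes_A A/I$ is an isomorphism. Hence $IC=C$, and Lemma~\ref{almostNakayama} forces $C=0$, i.e.\ $f$ is an epimorphism in $\AMod_A$.

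Next I would handle injectivity. Let $K$ be the kernel of $f$, so $0\to K\to M\to N\to 0$ is exact in $\AMod_A$. Because $N$ is almost projective, $\Hom_{\AMod_A}(N,-)$ is exact, so $\mathrm{id}_N$ lifts along $f$ to a section $s:N\to M$; thus $M\simeq K\oplus N$ and $K$, being a direct factor of the almost finitely generated module $M$, is almost finitely generated. Applying $(-)\otimes_A A/I$ to the split sequence yields a split exact sequence $0\to K/IK\to M/IM\to N/IN\to 0$ in which $f\otimes_A A/I$ is identified with the projection onto $N/IN$ having kernel $K/IK$; injectivity of $f\otimes_A A/I$ then gives $K/IK=0$, i.e.\ $IK=K$, and Lemma~\ref{almostNakayama} once more yields $K=0$. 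Combining the two steps, $f$ is an isomorphism.

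The argument is the formal two-step Nakayama dévissage, so the only delicate points — and the main obstacle — are the book-keeping in the almost category: confirming that kernels and cokernels formed in $\AMod_A$ inherit almost finite generation, that $(-)\otimes_A A/I$ is computed correctly on them (right exactness suffices for the cokernel, the splitting for the kernel), and that almost projectivity of $N$ genuinely produces a section in $\AMod_A$ rather than merely an almost section — which is precisely the exactness of $\Hom_{\AMod_A}(N,-)$ built into the definition of almost projective. The tightness and Jacobson-radical hypotheses on $I$ enter only through Lemma~\ref{almostNakayama}.
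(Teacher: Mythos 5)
Your argument is correct and is precisely the standard two-step Nakayama d\'evissage used for \cite[Corollary 5.1.8]{GR}, which the paper itself only cites without proof: cokernel plus Lemma~\ref{almostNakayama} for surjectivity, then the splitting supplied by almost projectivity of $N$ to reduce injectivity to a second application of Lemma~\ref{almostNakayama} to the kernel. The book-keeping points you flag (stability of almost finite generation under quotients and direct summands, and the fact that almost projectivity yields an honest section of an epimorphism in $\AMod_A$) are exactly the standard facts from almost ring theory needed here, so nothing is missing.
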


\begin{theorem}[\cite{GR} Theorem 5.3.24.]
Let $A$ be an almost $V$-algebra, $I$ a tight ideal of $A$ which is
contained in the Jacobson radical of $A$. Write $A_n= A/I^n$ for each $n
\ge 1$, and $A_\infty=\varprojlim{A_n}$. Then
\[
  \varprojlim : 2-\varprojlim \APMod_{A_n} \to \APMod_{A_\infty}  
\]
is a categorical equivalence. \qed
\end{theorem}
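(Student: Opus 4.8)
The plan is to reduce the statement to the classical derived completion/Nakayama-type equivalence for almost finitely generated projective modules, which in the excerpt is packaged as Corollary~\ref{almostlift} (the lifting statement) together with Lemma~\ref{almostNakayama} (almost Nakayama). First I would construct the comparison functor in the opposite direction: given an object $P \in \APMod_{A_\infty}$, form the system $(P \otimes_{A_\infty} A_n)_n$, which is an object of $2\text{-}\varprojlim \APMod_{A_n}$ since base change along $A_\infty \to A_n$ preserves almost finitely generated projective modules and the transition maps $P\otimes_{A_\infty}A_{n+1} \to P\otimes_{A_\infty}A_n$ are the evident ones. This gives a candidate inverse $\Phi$ to the functor $\varprojlim$ in the statement. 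I would then check that $\varprojlim$ and $\Phi$ are well-defined exact functors between the two categories (exactness of base change using that these are \emph{almost projective} modules, hence the relevant $\Ext$-vanishing of Proposition~\ref{almost-film-closed} applies after tensoring).

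The next step is to show $\varprojlim \circ \Phi \simeq \mathrm{id}$ and $\Phi \circ \varprojlim \simeq \mathrm{id}$. For $\varprojlim \circ \Phi$, one must show that for $P$ an almost finitely generated projective $A_\infty$-module the natural map $P \to \varprojlim_n (P \otimes_{A_\infty} A_n)$ is an isomorphism; since $I$ is tight and contained in the Jacobson radical, $P$ is $I$-adically complete (here one uses that $P$ is a retract of an almost finitely presented free module, so completeness reduces to the case $P = \tm \otimes_V A_\infty^{\,r}$, where it is immediate from $A_\infty = \varprojlim A_n$). For $\Phi \circ \varprojlim$, given a compatible system $(P_n) \in 2\text{-}\varprojlim \APMod_{A_n}$ with limit $P_\infty = \varprojlim P_n$, one must show $P_\infty$ is almost finitely generated projective over $A_\infty$ and that the canonical maps $P_\infty \otimes_{A_\infty} A_n \to P_n$ are isomorphisms. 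Almost finite generation of $P_\infty$ follows by lifting a finite set of almost generators of $P_1$ through the tower (using that each $P_{n+1} \to P_n$ is almost surjective with nilpotent-type kernel, so Lemma~\ref{almostNakayama} applies), and then $P_\infty \otimes_{A_\infty} A_n \to P_n$ is a map of almost finitely generated projective $A_n$-modules which is an isomorphism modulo $I$, hence an isomorphism by Corollary~\ref{almostlift}. Almost projectivity of $P_\infty$ is then inherited because $P_\infty$ is, Zariski-locally on the tower, a retract of a free module lifted compatibly.

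I expect the main obstacle to be the full faithfulness part, i.e.\ controlling $\Hom$-groups: one must show that for $P, Q \in \APMod_{A_\infty}$ the map $\Hom_{A_\infty}(P,Q) \to \varprojlim_n \Hom_{A_n}(P\otimes A_n, Q \otimes A_n)$ is an isomorphism, and that $\varprojlim^1$ of the relevant tower vanishes. The surjectivity and injectivity on $\Hom$ follow from completeness of $Q$ as above, but the $\varprojlim^1$-vanishing requires a Mittag--Leffler argument on the tower $(\Hom_{A_n}(P\otimes A_n, Q\otimes A_n))_n$; here the key input is that $P \otimes A_n$ is almost finitely \emph{presented} (Proposition on finitely generated projectives above), so that $\Hom_{A_n}(P\otimes A_n, -)$ commutes with the filtered colimits/limits needed, combined with the surjectivity of the transition maps $Q\otimes A_{n+1} \to Q \otimes A_n$. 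Once this Mittag--Leffler estimate is in hand, assembling full faithfulness and essential surjectivity into the asserted categorical equivalence is formal.
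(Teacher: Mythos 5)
Your argument is correct in outline, but it follows the original Gabber--Ramero route rather than the one taken in the paper. The paper's proof avoids Nakayama and Mittag--Leffler entirely: it exploits that objects of $\APMod_{A_\infty}$ are of the form $\tm\otimes_V P$ with $P$ honestly finitely generated projective, hence reflexive, and rewrites $P\otimes_{A_\infty}A_n\simeq\Hom_{A_\infty}(P^{\vee},A_n)$ so that the comparison $P\xrightarrow{\ \sim\ }\varprojlim_n(P\otimes_{A_\infty}A_n)$ becomes the commutation of $\Hom_{A_\infty}(P^{\vee},-)$ with inverse limits; the Hom-groups are then controlled by the same device. Your approach instead runs through completeness of $P$, lifting of (almost) generators via Lemma~\ref{almostNakayama}, the rigidity statement of Corollary~\ref{almostlift}, and a Mittag--Leffler argument for $\varprojlim^1$-vanishing. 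Each buys something: the duality trick is shorter and makes full faithfulness essentially formal, but the paper's treatment of essential surjectivity (``since it holds for $A_n^r$ it holds for $P_n$'') silently assumes the inverse system of projectives can be compared with a single module over $A_\infty$ --- exactly the point your idempotent/generator-lifting step addresses, so your argument is in fact more complete there. Two cautions on your side: (i) when you invoke Corollary~\ref{almostlift} to show $P_\infty\otimes_{A_\infty}A_n\to P_n$ is an isomorphism ``because it is one modulo $I$,'' the $n=1$ case is itself an instance of the claim, so you should first lift the idempotent presenting $P_1$ to a projective $Q$ over $A_\infty$ (using $I$-adic completeness and $I\subset\mathrm{Jac}(A)$) and then compare $Q$ with $\varprojlim P_n$; (ii) note that $\APMod$ in this paper is the concrete category of modules $\tm\otimes_V P$ with $P$ finitely generated projective, not the broader Gabber--Ramero notion of almost finitely generated projective modules, which makes your reduction to the free case legitimate but means your ``almost generators'' language is stronger than needed.
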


\begin{proof}
Let $(P_n)$ be an inverse system of finitely generated projective
 $A_n$-modules. Since the canonical map $:A^r_n \to (\varprojlim A^r_n)
 \otimes_{A_\infty} A_n $ is an isomorphism for each $n \ge 1$ and $r
 \ge 0$, $P_n \to (\varprojlim P_n) \otimes_{A_\infty} A_n $ is an
 isomorphism for each $n \ge 1$. Then $(P_n) \to ((\varprojlim P_n)
 \otimes_{A_\infty} A_n )$ is an isomorphism of inverse systems. Then
 one has a chain of isomorphisms $ \Hom_V ( \tilde{\mathfrak{m}},\, P_n
 ) \simeq \Hom_V ( \tilde{\mathfrak{m}},\, (\varprojlim P_n)
 \otimes_{A_\infty}A_n ) \simeq \Hom_V ( \tilde{\mathfrak{m}},\,
 \Hom_{A_\infty} ((\varprojlim P_n)^\vee,\,A_n ) \simeq \Hom_V (
 \tilde{\mathfrak{m}} \otimes_{A_\infty}(\varprojlim P_n)^\vee,\,A_n )
 \simeq \Hom_V ( (\varprojlim P_n)^\vee,\, \Hom_V
 (\mathfrak{m},\,A_n))$.  Therefore $ \Hom_V ( \tilde{\mathfrak{m}},\,
 P_n ) \simeq \Hom_V ( \tilde{\mathfrak{m}},\, (\varprojlim P_n)
 \otimes_V A_n )$.

Conversely, let $P_\infty$ be a finitely generated projective $A_\infty$-module. Then we show that $ P_\infty \to \varprojlim P_\infty \otimes_{A_\infty} A_n $ is an isomorphism. Since $P_\infty$ is canonically isomorphic to the second dual $P_\infty^{\vee \vee}$, $P_\infty \otimes_{A_\infty} A_n \simeq \Hom_{A_\infty}( P^\vee_\infty,\,A_n)$. Therefore one has $\varprojlim P_\infty \otimes_{A_\infty} A_n \simeq\varprojlim \Hom_{A_\infty}( P^\vee_\infty,\,A_n) \simeq \Hom_{A_\infty}( P^\vee_\infty,\,A_\infty) \simeq P_\infty$. We obtain 
\[
 \Hom_V( \tilde{\mathfrak{m}},\,P_\infty ) \simeq \Hom_V( \tilde{\mathfrak{m}},\, \varprojlim P_\infty \otimes_{A_\infty} A_n ) \simeq  \varprojlim \Hom_V( \tilde{\mathfrak{m}},\,  P_\infty \otimes_{A_\infty} A_n ) 
\]
\qed
\end{proof}

\begin{theorem}
\label{lim-perfect} Let $A$ be an almost $V$-algebra and $I$ a tight ideal
which is contained in the Jacobson radical of $A$. For $n \ge 1$, set
$A_n =A/I^n$, and $A_\infty=\varprojlim A_n$. Then the canonical
adjunction $( - \otimes_A^{\mathbb{L}}A_n ) :D(A_\infty)
\rightleftarrows 2-\varprojlim D(A_n) : \mathbb{R}\varprojlim_n$ induces
a categorical equivalence
\[
 \mathbb{R}\varprojlim_n: 2-\varprojlim \mathrm{APerf}( {A_n} )  \to \mathrm{APerf}(A_\infty) 
\]
between stable $\infty$-categories. 
\end{theorem}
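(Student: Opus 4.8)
The plan is to realise the asserted equivalence as an adjoint equivalence obtained by restricting the adjunction $F := (-\otimes_A^{\mathbb{L}}A_n)\colon D(A_\infty)\rightleftarrows 2\text{-}\varprojlim D(A_n) : \mathbb{R}\varprojlim_n =: G$, reducing every assertion to the module-level statement proved just above (the analogue of \cite[Theorem 5.3.24]{GR}), namely that $\varprojlim\colon 2\text{-}\varprojlim\APMod_{A_n}\to\APMod_{A_\infty}$ is an equivalence. Both $F$ and $G$ are exact: $F$ is a left adjoint between stable $\infty$-categories, and $G$, being a right adjoint between stable $\infty$-categories, preserves finite limits and hence finite colimits. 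The base change $F$ sends $\tm\otimes_V A_\infty$ to the compatible system $(\tm\otimes_V A_n)_n$ (flatness of $\tm$ over $V$ kills the higher Tor), which is an object of $2\text{-}\varprojlim\mathrm{APerf}(A_n)$; since $\mathrm{APerf}(A_\infty)$ is generated as a thick subcategory by $\tm\otimes_V A_\infty$ and $F$ is exact, $F$ carries $\mathrm{APerf}(A_\infty)$ into $2\text{-}\varprojlim\mathrm{APerf}(A_n)$.

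Next I would show that the unit $\eta\colon\mathrm{id}\to GF$ is an equivalence on $\mathrm{APerf}(A_\infty)$. The full subcategory on which $\eta$ is an equivalence is thick (both functors are exact and $\eta$ is natural), so it suffices to check $\eta$ at the generator $\tm\otimes_V A_\infty$. There $GF(\tm\otimes_V A_\infty)=\mathbb{R}\varprojlim_n(\tm\otimes_V A_n)$, and since $\tm$ is flat the transition maps $\tm\otimes_V A_{n+1}\to\tm\otimes_V A_n$ are surjective, so this derived limit has no $\mathbb{R}^1\varprojlim$-term and agrees with the naive $\varprojlim_n(\tm\otimes_V A_n)$; the identification $\varprojlim_n(\tm\otimes_V A_n)\simeq\tm\otimes_V A_\infty$ is exactly the module-level theorem applied to $P_\infty=\tm\otimes_V A_\infty\in\APMod_{A_\infty}$, whose inverse image under $\varprojlim$ is the system $(\tm\otimes_V A_\infty\otimes_{A_\infty}A_n)_n=(\tm\otimes_V A_n)_n$. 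Hence $\eta$ is an equivalence on $\mathrm{APerf}(A_\infty)$, so $F$ restricts to a fully faithful exact functor $\mathrm{APerf}(A_\infty)\hookrightarrow 2\text{-}\varprojlim\mathrm{APerf}(A_n)$ with retraction $G$.

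It then remains to see that this fully faithful functor is essentially surjective, i.e.\ that its (thick, since the source is idempotent complete) essential image exhausts $2\text{-}\varprojlim\mathrm{APerf}(A_n)$. Here I would use that $\tm\otimes_V A_n$ is a compact generator of $D(\FMod_{A_n})$ — compactness is Proposition~\ref{firm-compact}, and it generates because $\tm\otimes_V(-)\colon D(\AMod_{A_n})\to D(\FMod_{A_n})$ is an equivalence carrying the generating unit object to it — and that these compact generators are compatible along the transition functors $-\otimes_{A_{n+1}}^{\mathbb{L}}A_n$. Granting that a $2$-limit of compactly generated stable $\infty$-categories along compact-object-preserving transition functors has, as its subcategory of compact objects, the $2$-limit of the subcategories of compact objects, one gets that $2\text{-}\varprojlim\mathrm{APerf}(A_n)$ is generated as a thick subcategory by the single object $(\tm\otimes_V A_n)_n=F(\tm\otimes_V A_\infty)$, which lies in the essential image of $F$; the essential image being thick, it is everything. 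Combining the three steps, $F$ restricts to an equivalence $\mathrm{APerf}(A_\infty)\simeq 2\text{-}\varprojlim\mathrm{APerf}(A_n)$ with inverse $G=\mathbb{R}\varprojlim_n$, as asserted.

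The main obstacle is precisely this essential surjectivity: making rigorous that the tower $(\tm\otimes_V A_n)_n$ thickly generates $2\text{-}\varprojlim\mathrm{APerf}(A_n)$. If one prefers to avoid the abstract statement about compact objects of $2$-limits, the alternative is a dévissage on Tor-amplitude: given a compatible tower $(E_n)$ one lifts it to $A_\infty$ by choosing, compatibly in $n$, presentations of $E_n$ by objects of $\APMod_{A_n}$ and organising these over $A_\infty$ via the module-level theorem. This compatible lifting of almost perfect complexes along the tower is where the hypotheses that $I$ be tight and contained in $\mathrm{Jac}(A)$ enter beyond the module level, through the almost Nakayama lemma (Lemma~\ref{almostNakayama}) and the rigidity statement (Corollary~\ref{almostlift}), which supply the vanishing needed to propagate the presentations. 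Granting that input, the remaining points — exactness of $G$, Mittag--Leffler vanishing of $\mathbb{R}^1\varprojlim$, and the passage from generators to the thick subcategories they generate — are routine.
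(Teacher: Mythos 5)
Your first two steps (that $F$ lands in the $2$-limit and that the unit is an equivalence on $\mathrm{APerf}(A_\infty)$) are sound, but the paper gets full faithfulness more directly than your reduction to the generator: an almost perfect complex $E$ is dualizable in $D(\AMod_{A_\infty})$, so $E\otimes^{\mathbb{L}}_{A_\infty}(-)$ preserves all small limits, and applying this to $A_\infty=\varprojlim A_n$ gives $E\simeq\mathbb{R}\varprojlim\,(E\otimes^{\mathbb{L}}_{A_\infty}A_n)$ for every $E$ at once, with no thick-subcategory dévissage and no separate Mittag--Leffler discussion.

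The genuine gap is in your primary route to essential surjectivity. The statement you propose to ``grant'' --- that the compact objects of a $2$-limit of compactly generated stable $\infty$-categories along compact-preserving functors are the $2$-limit of the compact objects --- is not a routine fact and fails for general limits of compactly generated categories; and even if granted, it does not yield what you need, because you would still have to show that the limit category is compactly \emph{generated} by the single object $(\tm\otimes_V A_n)_n$, i.e.\ that an object of $2-\varprojlim \mathrm{APerf}(A_n)$ receiving no nonzero maps from shifts of this object vanishes. That is exactly where a Nakayama-type input must enter, and your sketch does not supply it. What the paper does is your fallback, carried out concretely on the tower: given a compatible system $(E_n)$ of complexes of almost finitely generated projective $A_n$-modules, form the naive limit $\varprojlim E_n$; since $(A_*)_\infty\to A_*/(I_*)^n$ is surjective, the transition maps of the tower are surjective, and the base-change maps $(\varprojlim E_n)\otimes_{A_\infty}A_n\to E_n$ are quasi-isomorphisms by the module-level equivalence $2-\varprojlim\APMod_{A_n}\simeq\APMod_{A_\infty}$ proved just before (the analogue of \cite[Theorem 5.3.24]{GR}); this exhibits $(E_n)$ in the essential image. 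So your ``alternative'' is the intended argument, and the compact-objects-of-limits route should either be dropped or be given a substantial independent justification.
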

\begin{proof}
Let $E$ be an almost perfect $A_\infty$-complex. Since $E$ is a
dualizable object of the derived category $D(A^a_\infty)$, $E
\otimes^\mathbb{L}_{A_\infty} (- ) $ preserves all small
limits. Therefore the canonical morphism $:E_* \to \mathbb{R}
\varprojlim (E \otimes^\mathbb{L}_{A_\infty} A_n)_* $ is a
quasi-isomorphism.

Conversely, let $(E_n)$ be an inverse system of complex of almost
finitely generated projective $A_n$-modules. Since $(A_{*})_\infty \to
A_{*}/(I_*)^n$ is surjective for each $n \ge 1$, $\varprojlim E_n \to
E_n$ is also surjective for each $n \ge 1$. Therefore, clearly, the
canonical morphism $ (\varprojlim E_n) \otimes_{A_\infty} A_n \to E_n $
is a quasi-isomorphism for each $n \ge 1$. Hence the morphism
$((\varprojlim E_n) \otimes_{A_\infty} A_n) \to (E_n)$ is an inverse
system of quasi-isomorphisms. \qed
\end{proof}

\subsection{Almost finite syntomic morphisms}
We consider the restriction of the functor $\tm \otimes_V (-): \Mod_{V}
\to \FMod_{V}$ to the subcategory $\CAlg_V$ of commutative unital
$V$-algebras. Since the functor $\tm \otimes_V (-)$ is (strong)
symmetric monoidal, the essential image of the restriction $\tm
\otimes_V (-)|_{\CAlg_{V}}$ is equivalent to the full subcategory of the
category of non-unital commutative $V$-algebras $\CAlg_{V}^{\rm
nu}$. Let $\CAlg_{\tm}$ denote the full subcategory $\CAlg_{V}^{\rm nu}$
spanned by those objects of the essential image of $\tm \otimes_V
(-)|_{\CAlg_{V}}$.

\begin{theorem}
\label{almost-monoidal}
The functor $\tm \otimes_V (-):\CAlg_V  \to\CAlg_{V}^{\rm nu}  $ induces a categorical equivalence
\[
 \tm \otimes_V (-): \CAlg^{\rm al}_{V} \to \CAlg_{\tm}
\]
whose quasi-inverse is equivalent to the functor 
$(V \oplus_{\tm} (-))^a: \CAlg_{\tm} \to \CAlg^{\rm al}_V$, where $V \oplus_{\tm} (-)$ denotes the cokernel of the diagonal: $\tm \to V \oplus (\tm \oplus_{V} (-))$. 
\end{theorem}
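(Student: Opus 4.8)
The plan is to read this as the $\CAlg$-level incarnation of Theorem~\ref{A} and to exhibit $(V\oplus_{\tm}(-))^a$ as an explicit quasi-inverse. Before doing so one must fix the morphism structure on $\CAlg_{\tm}$ so that the asserted equivalence can hold: I would take $\CAlg_{\tm}$ to be the category $\CAlg(\FMod_V)$ of commutative algebra objects of the symmetric monoidal category $(\FMod_V,\otimes_V,\tm)$ --- note that $\FMod_V$ is closed under $\otimes_V$ and that $\tm$ is a genuine tensor unit on it by Corollary~\ref{I-tilde} --- i.e.\ firm non-unital commutative $V$-algebras equipped with a $\tm$-unit and unit-preserving morphisms; the bare full subcategory of $\CAlg_V^{\rm nu}$ spanned by these objects carries strictly too many morphisms for the statement to be literally true. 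With this understood, $\tm\otimes_V(-):\Mod_V\to\FMod_V$ is strong symmetric monoidal (again Corollary~\ref{I-tilde} together with flatness of $\tm$, which gives $\tm\otimes_V\tm\simeq\tm$ and exactness), so it induces a functor $\CAlg_V\to\CAlg(\FMod_V)$; by Proposition~\ref{almost-film-closed} this inverts almost isomorphisms, hence descends along $(-)^a:\CAlg_V\to\CAlg^{\rm al}_V$ to the functor $\Phi:\CAlg^{\rm al}_V\to\CAlg_{\tm}$ of the statement.

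Next I would check that $\Psi:=(V\oplus_{\tm}(-))^a:\CAlg_{\tm}\to\CAlg^{\rm al}_V$ is well defined, i.e.\ that $V\oplus_{\tm}B$ is a unital commutative $V$-algebra. Its underlying module is the cokernel of the diagonal $\tm\to V\oplus B$, $x\mapsto(j(x),-e(x))$, with $j:\tm\to V$ the canonical map and $e:\tm\to B$ the $\tm$-unit; one equips $V\oplus B$ with the multiplication $(v,b)(v',b')=(vv',\,vb'+v'b+bb')$ and verifies that $\{(j(x),-e(x)):x\in\tm\}$ is an ideal. The only non-formal input is the identity $j(x)\cdot b=b\cdot e(x)$ in $B$ for $x\in\tm$, $b\in B$, which is exactly where firmness of $B$ is used: it forces the ambient $V$-action on $B$ to be recovered from the multiplication and the $\tm$-unit (via $\mathfrak{m}B=B$ and Corollary~\ref{I-tilde}).

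Then I would produce the two natural isomorphisms. For $\Phi\circ\Psi\simeq\mathrm{id}_{\CAlg_{\tm}}$: since $\tm$ is flat, $\tm\otimes_V(-)$ commutes with the cokernel, so $\tm\otimes_V(V\oplus_{\tm}B)$ is the cokernel of $\tm\otimes_V\tm\to\tm\oplus(\tm\otimes_V B)$ whose first component $\mathrm{id}_\tm\otimes j:\tm\otimes_V\tm\to\tm$ is an isomorphism by Corollary~\ref{I-tilde}; hence the pushout collapses onto its second summand, giving a natural isomorphism $\tm\otimes_V(V\oplus_{\tm}B)\simeq\tm\otimes_V B\simeq B$, the last step by firmness of $B$, and compatibility with the non-unital multiplications is routine. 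For $\Psi\circ\Phi\simeq\mathrm{id}_{\CAlg^{\rm al}_V}$: given an almost $V$-algebra $A$, pick a unital $V$-algebra $R$ with $R^a=A$ functorially (e.g.\ $R=A_*$). The almost isomorphism $R\to\Hom_V(\tm,R)$ of Proposition~\ref{almost} becomes an isomorphism after $\tm\otimes_V(-)$, so $\tm\otimes_V A\simeq\tm\otimes_V\Hom_V(\tm,R)=R_!$ and $V\oplus_{\tm}(\tm\otimes_V A)\simeq V\oplus_{\tm}R_!=R_{!!}$; by the Smith-ideal remark above, the canonical homomorphism $R_{!!}\to R$ is an almost isomorphism of $V$-algebras, whence $\Psi(\Phi(A))=(V\oplus_{\tm}(\tm\otimes_V A))^a\simeq(R_{!!})^a\simeq R^a=A$, naturally in $A$. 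Essential surjectivity of $\Phi$ then follows from $\Phi\Psi\simeq\mathrm{id}$, and together these give the asserted equivalence with quasi-inverse $(V\oplus_{\tm}(-))^a$.

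I expect the main obstacle to be the setup in the first paragraph rather than the computations: one has to verify that firm non-unital algebras admit a \emph{unique} $\tm$-unit and that $\tm\otimes_V(-):\Mod_V\to\FMod_V$ is honestly strong symmetric monoidal, so that $\CAlg_{\tm}$ carries the right morphisms --- equivalently, that the correct target of $\Phi$ is $\CAlg(\FMod_V)$ and not the naive full subcategory of $\CAlg_V^{\rm nu}$. Once the monoidal bookkeeping is in place, everything downstream is the same kind of pushout-and-firmness manipulation already used for Theorem~\ref{A} and in the discussion of $B_{!!}$ above.
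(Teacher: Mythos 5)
Your proposal follows essentially the same route as the paper: the equivalence is exhibited through the $(-)_{!!}\dashv(-)^a$ adjunction, with the two key inputs being the isomorphism $\mu_{\tm}:\tm\otimes_V\tm\to\tm$ (plus flatness of $\tm$, which makes the pushout defining $V\oplus_{\tm}B$ collapse after tensoring) and the Smith-ideal remark that $B_{!!}\to B$ is an almost isomorphism, which gives $(V\oplus_{\tm}(\tm\otimes_V(-)))^a\simeq\mathrm{Id}$. You go beyond the paper in two respects worth keeping: the paper verifies only the composite $\Psi\circ\Phi\simeq\mathrm{Id}$ and then asserts full faithfulness, whereas you also compute $\tm\otimes_V(V\oplus_{\tm}B)\simeq B$ explicitly; and your insistence that $\CAlg_{\tm}$ be read as $\CAlg(\FMod_V)$ with $\tm$-unit-preserving morphisms, rather than as the \emph{full} subcategory of $\CAlg_V^{\rm nu}$ as the paper literally defines it, is a genuine correction rather than mere bookkeeping --- for instance, for $B=B_1\times B_2$ the non-unital endomorphism $(x,y)\mapsto(x,0)$ of $\tm\otimes_V B$ is multiplicative and $V$-linear but is induced by no almost algebra endomorphism of $B^a$, so $\tm\otimes_V(-)$ is not full onto the naive full subcategory and the adjunction the paper invokes does not exist there.
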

\begin{proof}
Let $\mu_{A}: A \otimes_V A \to A$ and $\mu_{\tm}: \tm \otimes_{V} \tm \to \tm$ denote the multiplication maps. By the idempotentness of $\mathfrak{m}$,  the map $\mu_{\tm}: \tm \otimes_{V} \tm \to \tm$ is an isomorphism. Therefore, in the commutative diagram 
\[
 \xymatrix@1{ 
\tm \otimes_{V } A \otimes_{V } \tm \otimes_{V } A   
\ar[r]^{\mu_{\tm}} 
\ar[d]_{\mu_A} &  \tm \otimes_{V } A \otimes_{V } A \ar[d]^{\mu_A}
\\
\tm \otimes_{V} \tm \otimes_{V } A  \ar[r]_{\mu_{\tm}} 
& \tm \otimes_{V} A,  
 }
\]
both horizontal maps are isomorphisms.  We prove that the $\tm \otimes_V (-):\CAlg_{\tm} \to \CAlg^{\rm al}_V $ functor is fully faithful: By definition of the functor $(-)_{!!}: \CAlg^{\rm al}_V \to  \CAlg_V $,  the counit $ (V \oplus_{\tm}  (\tm \otimes_V (-)))^a \to \mathrm{Id}_{ \CAlg^{\rm al}_V }  $ is an equivalence of functors. Therefore $\tm \otimes_V (-)$ is fully faithful, entailing a categorical equivalence. \qed 
\end{proof}

By Theorem~\ref{almost-monoidal}, we can formulate theories of almost
algebras from the non-unital algebras viewpoint.  Let $A$ be an almost
$V$-algebra and $B$ an almost $A$-algebra.  Then $\tm \otimes_{V }B$ is
an object of $(\CAlg_{\tm})_{\tm \otimes_{V} A /}$.  Let $P_A$ be a
property of $A$-algebras and $\mathcal{P}_A$ denote the subcategory of
$\CAlg_A$ spanned by $A$-algebras satisfying the property $P_A$. Then
$\mathcal{P}_{\tm \otimes_{V} A}$ denote the subcategory of
$(\CAlg_{\tm})_{\tm \otimes_{V} A /}$ spanned by those objects whose
image of $(V \oplus_{\tm} (-)):(\CAlg_{\tm})_{\tm \otimes_{V} A /} \to
\CAlg_V$ belongs to $\mathcal{P}_A$. We say that $\mathcal{P}_{\tm
\otimes_{V} A}$ is the subcategory of $(\CAlg_{\tm})_{\tm \otimes_{V} A
/}$ spanned by almost $A$-algebras satisfying the property $P_A$. 

\begin{definition}
Let $A$ be an almost $V$-algebra and $B$ an almost $A$-algebra.
The almost relative cotangent complex $L_{B/A}$ is defined by the following:  
\[
L_{B/A}= B_{!!} \otimes_{(V^a \times B)_{!!}} L_{(V^a \times B)_{!!}/ (V^a \times A)_{!!}} 
\]
in the derived category $D(\Mod_{B_{!!}})$. 
\end{definition}
By Gabber--Ramero~\cite[Proposition 2.5.43 and Remark 2.2.28]{GR}, the canonical morphism $\tm \otimes_{V} L_{B/A} \to L_{B/A} $ is a quasi-isomorphism. Therefore, $ L_{B/A} $ can be an object of $D(\FMod_{B_{!!}})$. 

\begin{proposition}
\label{almost-cotangent}
Let $A$ be an almost $V$-algebra and $B$ an almost $A$-algebra. Then the canonical morphism $L_{B/A} \to L_{B_{!!}/A_{!!}}$ induced by the projection $V \times B \to B$ is an almost quasi-isomorphism. 
\end{proposition}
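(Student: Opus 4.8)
The plan is to compare the almost relative cotangent complex $L_{B/A}$ with $L_{B_{!!}/A_{!!}}$ by unwinding the definition and exploiting the transitivity triangle together with the fact that the canonical maps $B_{!!}\to B$ and $A_{!!}\to A$ are almost isomorphisms (established in the Remark following the $(-)_{!!}$ discussion, via Hovey's Smith ideal computation $\mathrm{id}_{\tm}\Box j\simeq\mathrm{id}_{\tm}$). First I would set $R=(V^a\times A)_{!!}$ and $S=(V^a\times B)_{!!}$ so that by definition $L_{B/A}=B_{!!}\otimes_S L_{S/R}$; the claim is that base-changing $L_{S/R}$ to $B_{!!}$ agrees, up to almost quasi-isomorphism, with $L_{B_{!!}/A_{!!}}$. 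The key input is the transitivity (Jacobi--Zariski) triangle in $D(\Mod_{B_{!!}})$ associated to the tower $A_{!!}\to R\to S\to B_{!!}$ (and the parallel tower for $B_{!!}$), together with the observation that the ``extra'' factors coming from the $V^a$-summand and from the difference between $S$ and $B_{!!}$ are killed after tensoring with $\tm$.

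The central computation I would carry out is that $\tm\otimes_V L_{S/R}\otimes_S B_{!!}\simeq \tm\otimes_V L_{B_{!!}/A_{!!}}$. Here I would use: (i) the product decomposition $V^a\times A$ splits the cotangent complex, so $L_{(V^a\times B)/(V^a\times A)}$ restricted along the projection to the $B$-factor recovers $L_{B/A}$ in the almost category, which is precisely the content of the cited Gabber--Ramero \cite[Proposition 2.5.43, Remark 2.2.28]{GR}; (ii) the almost isomorphisms $A_{!!}\to A$ and $B_{!!}\to B$ induce quasi-isomorphisms on cotangent complexes after applying $\tm\otimes_V(-)$, since $\tm\otimes_V(-)$ inverts almost isomorphisms (Proposition~\ref{almost-film-closed}) and the cotangent complex is functorial; and (iii) flat base change for the cotangent complex along $R\to S$, using that $S=V\oplus_{\tm}(\tm\otimes_V\text{stuff})$ is built by the unitalization construction so that $L_{S/R}\otimes_S B_{!!}$ computes the relevant relative cotangent complex. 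Assembling these, both $\tm\otimes_V L_{B/A}$ and $\tm\otimes_V L_{B_{!!}/A_{!!}}$ are identified with $\tm\otimes_V L_{B_{!!}/A_{!!}}$, hence the natural map between them is a quasi-isomorphism, which by definition says the map $L_{B/A}\to L_{B_{!!}/A_{!!}}$ is an almost quasi-isomorphism.

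The main obstacle I anticipate is bookkeeping the several layers of unitalization: the objects $(V^a\times A)_{!!}$ and $(V^a\times B)_{!!}$ are honest unital rings, but the maps relating them to $A_{!!}$, $B_{!!}$, and to $A$, $B$ themselves are only almost isomorphisms, and the cotangent complex is not a priori insensitive to this — it only becomes so after $\tm\otimes_V(-)$. So I must be careful to perform all comparisons in $D(\FMod)$ (equivalently, after applying $\tm\otimes_V(-)$), and to check that the transitivity triangles I invoke are triangles of firm complexes, using that $\tm$ is flat (the standing hypothesis of this section) so that $\tm\otimes_V(-)$ is exact and commutes with the mapping cone. A secondary technical point is verifying that the $V^a$-factor genuinely contributes a trivial summand to the cotangent complex of the product $V^a\times B$ over $V^a\times A$; this is where I lean directly on \cite[Proposition 2.5.43]{GR} rather than reproving it. Once these compatibilities are in place, the almost quasi-isomorphism statement follows formally from the two transitivity triangles and the five lemma in the triangulated category $D(\FMod_{B_{!!}})$.
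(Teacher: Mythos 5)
Your proposal follows essentially the same route as the paper's proof: the paper observes that the projection $(V^a\times A)_{!!}\to A_{!!}$ is flat and surjective, so $L_{A_{!!}/(V^a\times A)_{!!}}$ is acyclic, and then combines the base-change identification $B_{!!}\simeq (V^a\times B)_{!!}\otimes_{(V^a\times A)_{!!}}A_{!!}$ (your step (iii)) with the transitivity triangle for $(V^a\times A)_{!!}\to A_{!!}\to B_{!!}$ to identify $\tm\otimes_V L_{B_{!!}/A_{!!}}$ and $L_{B/A}=B_{!!}\otimes_{(V^a\times B)_{!!}}L_{(V^a\times B)_{!!}/(V^a\times A)_{!!}}$ with a common object of $D(\FMod_{B_{!!}})$. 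Two cautions. First, your ``tower $A_{!!}\to R\to S\to B_{!!}$'' has an arrow backwards: there is no unital ring map $A_{!!}\to(V^a\times A)_{!!}$; the available maps are $R\to A_{!!}$ and $R\to S\to B_{!!}$, and the triangle you actually need is the one attached to $R\to A_{!!}\to B_{!!}$. Second, your input (ii) is both unnecessary and unjustified as stated: here $A$ and $B$ are objects of the almost category, so ``$A_{!!}\to A$'' is not a ring homomorphism one can feed to the cotangent complex functor, and in any case the fact that $\tm\otimes_V(-)$ inverts almost isomorphisms of \emph{modules} does not by itself imply that the (highly non-exact, simplicially resolved) cotangent complex sends almost isomorphisms of rings to almost quasi-isomorphisms --- a statement of exactly that kind is what the proposition is establishing, so invoking it would be circular. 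Fortunately your central computation uses only (i) and (iii), so the argument survives with (ii) deleted.
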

\begin{proof}
Since the projection $(V \times A)_{!!} \to A_{!!}$ is flat and surjective, by Stacks project~\cite[08QY, Lemma 91.8.4]{stacks-project}, the relative cotangent complex $L_{A_{!!}/ (V^a \times A)_{!!}}$ is acyclic.   Note that $B \simeq (V^a \times B)/ V^a \simeq  (V^a\times B) \otimes_{V^a \times A} \left( (V^a \times A) / V^a \right)$. Therefore the colimit preserving functor $L_{(-)_{!!}/{(V^a \times A)_{!!}}}$ induces a quasi-isomorphism $L_{( V^a \times B)_{!!}/{(V^a \times A)_{!!}}} \to    L_{B_{!!}/{(V^a \times A)_{!!}}}$. By the distinguish triangle 
\[
B_{!!} \otimes^\mathbb{L}_{( V^a \times A)_{!!}} L_{A_{!!}/{(V^a \times A)_{!!}}} \to L_{B_{!!}/{(V^a \times A)_{!!}}} \to L_{B_{!!}/A_{!!}} \to B_{!!} \otimes^\mathbb{L}_{( V^a \times A)_{!!}} L_{A_{!!}/{(V^a \times A)_{!!}}}[1]
\]
induced by the sequence $ ( V^a \times A)_{!!} \to A_{!!} \to B_{!!}$ of $V$-algebras, we obtain that the canonical morphism $L_{B_{!!}/{(V^a \times A)_{!!}}} \to L_{B_{!!}/A_{!!}} $ is a quasi-isomorphism. Hence, both $\tm \otimes_V L_{B_{!!}/A_{!!}} $ and $L_{B/A}$ are quasi-isomorphic to the same object of $D(\FMod_B)$. \qed  
\end{proof}

\begin{definition}
A morphism $f:A \to B$ of almost $V$-algebras is {\it almost finite syntomic}
if the following conditions are satisfied:
\begin{itemize}
 \item[(1)] An $A$-algebra $B$ is an almost finitely projective
	    $A$-module.
 \item[(2)] The almost relative cotangent complex $L_{B/A}$ is almost perfect and tor-amplitude in $[-1,\,0]$ in the category  $D(\Mod_{B_{!!}})$.
\end{itemize}
\end{definition}

%
\begin{theorem}
\label{nomumital-almostFSyn}
Let $\Alg_{\tm \otimes_{V} A}^{\rm FSyn}$ denote the full subcategory of $\CAlg_{\tm \otimes_{V} A} $ spanned by those objects $\tm \otimes_{V} B$ satisfying $(V \oplus_{\tm} \tm \otimes_{V} B)^a$ is an almost finite syntomic $(V \oplus_{\tm} \tm \otimes_{V} A)^a $-algebra. Then the categorical equivalence $(V \oplus_{\tm} \tm \otimes_{V} (-))^a: \CAlg_{\tm \otimes_{V} A} \to \CAlg_{A}^a $ induces a categorical equivalence between $\Alg_{\tm \otimes_{V} A}^{\rm FSyn}$ and the full subcategory $\Alg^{\rm alFSyn}_A$ of $ \CAlg_{A}^a $ spanned by almost finite syntomic $A$-algebras.   
\end{theorem}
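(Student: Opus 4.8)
The plan is to deduce the statement from Theorem~\ref{almost-monoidal} together with the elementary observation that a categorical equivalence restricts to an equivalence between any two full, replete subcategories that are carried onto one another by it. First I would pass to the relative setting: since $\tm\otimes_V(-)\colon\CAlg^{\rm al}_V\to\CAlg_\tm$ is an equivalence with quasi-inverse $(V\oplus_\tm(-))^a$, it induces an equivalence between the category $\CAlg_A^a$ of almost $A$-algebras and $(\CAlg_\tm)_{\tm\otimes_V A/}=\CAlg_{\tm\otimes_V A}$, and, after using the counit equivalence $(V\oplus_\tm\tm\otimes_V A)^a\xrightarrow{\ \sim\ }A$ from the proof of Theorem~\ref{almost-monoidal} to identify the base object, the functor appearing in the statement, $\Phi:=(V\oplus_\tm\tm\otimes_V(-))^a\colon\CAlg_{\tm\otimes_V A}\to\CAlg_A^a$, is a quasi-inverse to $\tm\otimes_V(-)$. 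In particular, this identification is exactly what makes ``almost finite syntomic over $(V\oplus_\tm\tm\otimes_V A)^a$'' and ``almost finite syntomic over $A$'' mean the same thing.

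Next I would record that $\Alg^{\rm alFSyn}_A$ is a full, replete subcategory of $\CAlg_A^a$: fullness is its definition, and repleteness holds because both conditions defining an almost finite syntomic morphism are stable under isomorphism of almost $A$-algebras --- condition~(1) visibly, and condition~(2) because $(-)_{!!}$ is functorial and, by the definition of $L_{B/A}$ together with Proposition~\ref{almost-cotangent}, the almost relative cotangent complex and its properties (almost perfect; tor-amplitude in $[-1,0]$) are preserved. Granting this, I would unwind the definition of $\Alg^{\rm FSyn}_{\tm\otimes_V A}$: an object $\tm\otimes_V B$ lies in it precisely when $\Phi(\tm\otimes_V B)=(V\oplus_\tm\tm\otimes_V B)^a$ is almost finite syntomic over $(V\oplus_\tm\tm\otimes_V A)^a\simeq A$, i.e. precisely when $\Phi(\tm\otimes_V B)\in\Alg^{\rm alFSyn}_A$. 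Hence $\Alg^{\rm FSyn}_{\tm\otimes_V A}=\Phi^{-1}(\Alg^{\rm alFSyn}_A)$. Restricting the fully faithful functor $\Phi$ to the full subcategory $\Alg^{\rm FSyn}_{\tm\otimes_V A}$ stays fully faithful, and since $\Phi$ is essentially surjective one has $\Phi\bigl(\Phi^{-1}(\Alg^{\rm alFSyn}_A)\bigr)=\Alg^{\rm alFSyn}_A$, so the restriction is essentially surjective onto $\Alg^{\rm alFSyn}_A$; therefore it is a categorical equivalence, as claimed.

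The only ingredient here that is not pure formalism is the compatibility of the equivalence $\tm\otimes_V(-)$ with the unitalization $(-)_{!!}$ and with the almost relative cotangent complex, which is what guarantees that the property ``almost finite syntomic'' is transported faithfully between the non-unital description (as a $\tm\otimes_V A$-algebra) and the almost description (as an almost $A$-algebra). This is already built into the Definition of $L_{B/A}$ and into Proposition~\ref{almost-cotangent}, so I expect the main --- and fairly modest --- effort to be the bookkeeping of matching the two notations for one and the same algebra and verifying that all the structure entering the definition of almost finite syntomy is carried along by $\Phi$.
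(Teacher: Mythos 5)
Your proposal is correct and follows essentially the same route as the paper: the paper's (one-sentence) proof likewise observes that $\Alg^{\rm FSyn}_{\tm\otimes_V A}$ is by definition the pullback of $\Alg^{\rm alFSyn}_A$ along the categorical equivalence $(V\oplus_\tm\tm\otimes_V(-))^a$ of Theorem~\ref{almost-monoidal}, invoking Proposition~\ref{almost-cotangent} and the definition of almost finitely generated projective modules for the compatibility of the defining conditions. Your write-up simply makes explicit the formal step (an equivalence restricts to an equivalence on the preimage of a full replete subcategory) that the paper leaves implicit.
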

\begin{proof}
By the definition of almost finitely generated projective modules and Proposition~\ref{almost-cotangent}, the category $\Alg_{\tm \otimes_{V} A}^{\rm FSyn}$ is equivalent to the pullback of $\Alg^{\rm alFSyn}_A $ along the categorical equivalence n the functor $(V \oplus_{\tm} \tm \otimes_{V} (-))^a: \CAlg_{\tm \otimes_{V} A} \to \CAlg_{A}^a $.  \qed
\end{proof}

Fix a regular cardinal $\kappa$.
We write $\MS=\MS_{\rm FSyn}$ and let $\MS^\kappa$ denote full
subcategory spanned by all $\kappa$-small objects of $\MS$. Referring the
results~\cite[Theorem 3.4.1 and Lemma 3.5.1]{ModMGL} and \cite[Lemma
5.1.3]{MR4325285}, we define the almost version of algebraic cobordism:
\begin{definition}
Let $A$ be an almost $V$-algebra. Let $\alFSyn(A)$ denote the largest
groupoid of the category $\Alg_A^{\rm alFSyn}$, and set $\AMGL=
\Sigma_+^\infty \alFSyn^\kappa(-)$, where $\alFSyn^\kappa$ is the right
Kan extension of the restriction $\alFSyn|_{(\MS^\kappa)^{\rm op}}$
along the inclusion $(\MS^\kappa)^{\rm op} \to \MS^{\rm op}$. We say
that the motivic spectrum $\AMGL$ is the {\it almost algebraic
cobordism}.
\end{definition}

For an almost $V$-algebra $A$, let $\Alg_A^{\rm alFSyn}$ denote the
category of almost finite syntomic $A$-algebras. The results in
Section~\ref{sec-almostDerivedNakayama} provides us the following:
\begin{proposition}
\label{lim-FSyn}Let $A$ be an almost $V$-algebra and $I$ a tight ideal
which is contained in the Jacobson radical of $A$.  For $n \ge 1$, set
$A_n =A/I^n$, and $A_\infty=\varprojlim A_n$. Let $(A_n)_{ n\ge
1}$ be an inverse system of commutative rings and write $A=\varprojlim
A_n$.  The functor $(- \otimes_A A_n)_{ n \ge 1} : \Alg_{A}^{\rm alFSyn}
\to 2-\varprojlim_{n} \Alg_{A_n}^{\rm alFSyn}$ is a categorical
equivalence.
\end{proposition}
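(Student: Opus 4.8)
The plan is to show that the functor
\[
\Phi=(-\otimes_A A_n)_{n\ge 1}\colon \Alg_A^{\rm alFSyn}\longrightarrow 2-\varprojlim_n \Alg_{A_n}^{\rm alFSyn}
\]
is both fully faithful and essentially surjective, handling the two defining conditions of an almost finite syntomic algebra separately. The module condition (that $B$ be almost finitely projective over $A$) will be fed into the Gabber--Ramero limit equivalence $\varprojlim\colon 2-\varprojlim\APMod_{A_n}\to\APMod_{A_\infty}$ (\cite[Theorem~5.3.24]{GR}) recalled above, and the cotangent-complex condition (that $L_{B/A}$ be almost perfect of Tor-amplitude $[-1,0]$) into Theorem~\ref{lim-perfect}. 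First I would record that $\Phi$ is well defined, i.e.\ that base change preserves almost finite syntomicity: since an almost finitely projective $A$-module is almost flat, one has $\Tor^A_i(B,A/I^n)\simeq 0$ for $i>0$, and then transitivity of the cotangent complex together with Proposition~\ref{almost-cotangent} yields the almost flat base-change identification $L_{B_n/A_n}\simeq L_{B/A}\otimes^{\mathbb{L}}_B B_n$ (up to almost quasi-isomorphism), where $B_n=B\otimes_A A_n$. In particular $B_n$ is again almost finitely projective over $A_n$ and, if $L_{B/A}$ is almost perfect of Tor-amplitude $[-1,0]$, so is $L_{B_n/A_n}$.

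For essential surjectivity I would take an object $(B_n)$ of $2-\varprojlim_n\Alg_{A_n}^{\rm alFSyn}$, with the structural isomorphisms $B_{n+1}\otimes_{A_{n+1}}A_n\simeq B_n$, and set $B=\varprojlim_n B_n$ (an ordinary limit, the transition maps being the surjective quotient maps $B_{n+1}\to B_{n+1}/I^nB_{n+1}$). Since each $B_n$ is almost finitely projective over $A_n$, the system $(B_n)$ lies in $2-\varprojlim\APMod_{A_n}$, so by \cite[Theorem~5.3.24]{GR} the limit $B$ is almost finitely projective over $A$ and the canonical maps $B\otimes_A A_n\to B_n$ are isomorphisms; taking the limit of the multiplications endows $B$ with a commutative $A$-algebra structure for which these become $A_n$-algebra isomorphisms. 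It remains to see that $L_{B/A}$ is almost perfect of Tor-amplitude $[-1,0]$. By the base-change identification, $L_{B/A}\otimes^{\mathbb{L}}_B B_n\simeq L_{B_n/A_n}$ is almost perfect of Tor-amplitude $[-1,0]$ for every $n$, so the tower $(L_{B_n/A_n})_n$ defines an object of $2-\varprojlim_n\mathrm{APerf}(B_n)$; moreover $IB$ is a tight ideal of $B$ contained in $\mathrm{Jac}(B)$ (tight because $I$ is, and in $\mathrm{Jac}(B)$ because $B$ is almost integral over $A$), and $B=\varprojlim_n B/I^nB$, so Theorem~\ref{lim-perfect} applies to $B$ and shows $\mathbb{R}\varprojlim_n L_{B_n/A_n}$ is almost perfect over $B$. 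Finally I would identify the natural map $L_{B/A}\to\mathbb{R}\varprojlim_n L_{B_n/A_n}$ as an almost quasi-isomorphism, using that $L_{B/A}$ is connective and almost pseudo-coherent (as $B$ is almost finitely presented over $A$), hence derived $I$-complete over the $I$-complete ring $A$; Tor-amplitude $[-1,0]$ is inherited in the limit and may alternatively be checked modulo $I$ by the almost Nakayama lemma (Lemma~\ref{almostNakayama} and Corollary~\ref{almostlift}). This produces an almost finite syntomic $A$-algebra $B$ with $\Phi(B)\simeq(B_n)$.

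For full faithfulness, given $B,B'\in\Alg_A^{\rm alFSyn}$ with $B_n=B\otimes_A A_n$ and $B'_n=B'\otimes_A A_n$, I would use that $B'$ almost finitely projective over $A$ gives $B'\simeq\varprojlim_n B'_n$ in $\CAlg_A$ by \cite[Theorem~5.3.24]{GR}, whence
\[
\Hom_{\CAlg_A}(B,B')\simeq\varprojlim_n\Hom_{\CAlg_A}(B,B'_n).
\]
Since $B'_n$ is an $A/I^n$-algebra, every $A$-algebra homomorphism $B\to B'_n$ annihilates $I^nB$ and so factors uniquely through $B_n=B/I^nB$, giving $\Hom_{\CAlg_A}(B,B'_n)\simeq\Hom_{\CAlg_{A_n}}(B_n,B'_n)$. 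Combining the two identifications, $\Hom(B,B')\simeq\varprojlim_n\Hom(B_n,B'_n)$, which is exactly the hom-set between $\Phi(B)$ and $\Phi(B')$ in the $2$-limit, the transition maps being the reductions. Thus $\Phi$ is fully faithful, and together with the previous step it is a categorical equivalence.

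The step I expect to be the main obstacle is the cotangent-complex half of essential surjectivity: verifying the hypotheses of Theorem~\ref{lim-perfect} for $B$ (tightness of $IB$ in $B$ and $IB\subseteq\mathrm{Jac}(B)$), and above all establishing the identification $L_{B/A}\simeq\mathbb{R}\varprojlim_n L_{B_n/A_n}$, i.e.\ the derived $I$-completeness of $L_{B/A}$, which rests on enough finiteness of $L_{B/A}$ (almost pseudo-coherence coming from $B$ being almost finitely presented over $A$). The module-theoretic input and the full faithfulness are comparatively formal once the Gabber--Ramero limit theorem and the almost flat base-change formula for the cotangent complex (through Proposition~\ref{almost-cotangent}) are in hand.
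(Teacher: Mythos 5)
Your proposal is correct and follows essentially the same route as the paper's proof: form $B=\varprojlim_n B_n$, invoke the Gabber--Ramero limit equivalence for almost finitely generated projective modules, identify $\tm\otimes_V L_{B/A}\otimes^{\mathbb{L}}_B B_n$ with $\tm\otimes_V L_{B_n/A_n}$ by base change along $A\to A_n$, and conclude almost perfectness via Theorem~\ref{lim-perfect}, with the Tor-amplitude handled in the limit (the paper uses a Milnor exact sequence where you propose derived $I$-completeness and the almost Nakayama lemma). You additionally spell out full faithfulness and the well-definedness of the base-change functor, which the paper leaves implicit.
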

\begin{proof}
Let $(B_n)$ be an inverse system of almost finite syntomic
$A_n$-algebras. Then, by the assumption, $B=\varprojlim A_n$ is almost
finite, projective $A$-algebra.  We show that the relative cotangent
complex $L_{B/A}$ is perfect and tor-amplitude in $[-1,\,0]$. Since $B
\to B_n$ is unramified, the sequence $A \to B \to B_n$ induces a
quasi-isomorphism $\tm \otimes_V L_{B/A} \otimes^\mathbb{L}_B B_n \to
\tm \otimes_V L_{B_n/ A} $. Similarly, the sequence $A \to A_n \to B_n$
induces a quasi-isomorphism $\tm \otimes_V L_{B_n/A} \to \tm \otimes_V
L_{B_n /A_n}$. Therefore $ \tm \otimes_V L_{B/A} \otimes^\mathbb{L}_B
B_n \to \tm \otimes_V L_{B_n/A_n}$ is a quasi-isomorphism. Since each
$B_n$ is an almost finitely generated projective $A$-module, $\tm
\otimes_V B \otimes^\mathbb{L}_A A_n \to \tm \otimes_V B_n$ is an
isomorphism for each $n$. Then $\tm \otimes_V L_{B/A}
\otimes^\mathbb{L}_A A_n \to \tm \otimes_V L_{B/A} \otimes^\mathbb{L}_B
(B \otimes_A A_n ) \to \tm \otimes_V L_{B/A} \otimes^\mathbb{L}_B B_n $
is a composition of quasi-isomorphisms for each $n$. By
Theorem~\ref{lim-perfect}, $\tm \otimes_V L_{B/A} \to \varprojlim \tm
\otimes_V L_{B_n/A_n}$ is a quasi-isomorphism and they are almost
perfect complexes of $B$-modules.
By the Milnor exact sequence
\[
  0 \to \varprojlim {}^1 H_{\bullet+1}(  \tm \otimes_V  L_{B_n/A_n} ) ) \to 
 H_{\bullet}(\varprojlim  \tm \otimes_V  L_{B/A} ) \to  
\varprojlim  H_{\bullet}( \tm \otimes_V  L_{B_n/A_n}   ) \to 0,
 \]   
the projective limit $\tm \otimes_V  L_{B_n/A_n} $ is tor-amplitude in $[-1,\,0]$ in $D(\Mod_{B_{!!}} )$.  \qed
\end{proof}

\begin{corollary}
\label{lim-AFSyn}Let $A$ be an almost $V$-algebra and $I$ a tight ideal
which is contained in the Jacobson radical of $A$.  
Let $(A_n)_{ n\ge 1}$ be an inverse system of commutative rings and write $A=\varprojlim A_n$.   The induced map $\alFSyn^\kappa(\Spec A) \to 
\alFSyn^\kappa(\varinjlim \Spec A_n)$ is a weak equivalence of spaces.
\end{corollary}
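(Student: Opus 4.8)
The plan is to reduce the statement about the right Kan extension $\alFSyn^\kappa$ to the statement about $\alFSyn$ itself, and then invoke Proposition~\ref{lim-FSyn}. First I would recall that, by definition, $\alFSyn^\kappa$ is the right Kan extension of $\alFSyn|_{(\MS^\kappa)^{\mathrm{op}}}$ along $(\MS^\kappa)^{\mathrm{op}} \to \MS^{\mathrm{op}}$, so that for a motivic space $X$ one computes $\alFSyn^\kappa(X)$ as a limit of $\alFSyn$ evaluated on $\kappa$-small motivic spaces mapping to $X$. Since $\Spec A$ and each $\Spec A_n$ are representable (hence compact, in particular $\kappa$-small after enlarging $\kappa$ if necessary), and since $\varinjlim \Spec A_n$ is a filtered colimit of $\kappa$-small objects, the Kan extension formula together with the fact that $\alFSyn$ is accessible lets me identify $\alFSyn^\kappa(\varinjlim \Spec A_n)$ with $\varprojlim_n \alFSyn^\kappa(\Spec A_n)$, which in turn is $\varprojlim_n \alFSyn(\Spec A_n)$ on representables. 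So the content reduces to showing the canonical map
\[
 \alFSyn(\Spec A) \to \varprojlim_n \alFSyn(\Spec A_n)
\]
is a weak equivalence of spaces.

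Next I would unwind the definition: $\alFSyn(\Spec A)$ is the largest groupoid (the core) of the category $\Alg_A^{\mathrm{alFSyn}}$ of almost finite syntomic $A$-algebras, and similarly for each $A_n$. Taking cores commutes with $2$-limits of categories, so $\varprojlim_n \alFSyn(\Spec A_n)$ is the core of $2\text{-}\varprojlim_n \Alg_{A_n}^{\mathrm{alFSyn}}$. Now Proposition~\ref{lim-FSyn} asserts precisely that the functor $(-\otimes_A A_n)_{n \ge 1} : \Alg_A^{\mathrm{alFSyn}} \to 2\text{-}\varprojlim_n \Alg_{A_n}^{\mathrm{alFSyn}}$ is a categorical equivalence; passing to cores on both sides yields the desired equivalence of spaces. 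The chain is: $\alFSyn(\Spec A) = \mathrm{core}(\Alg_A^{\mathrm{alFSyn}}) \xrightarrow{\sim} \mathrm{core}(2\text{-}\varprojlim_n \Alg_{A_n}^{\mathrm{alFSyn}}) = \varprojlim_n \mathrm{core}(\Alg_{A_n}^{\mathrm{alFSyn}}) = \varprojlim_n \alFSyn(\Spec A_n) \simeq \alFSyn^\kappa(\varinjlim \Spec A_n)$.

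The main obstacle I anticipate is the first step — checking that the right Kan extension $\alFSyn^\kappa$ genuinely computes correctly on the filtered colimit $\varinjlim \Spec A_n$ and on the representables $\Spec A$, $\Spec A_n$, i.e. that it agrees with $\alFSyn$ there rather than merely approximating it. This requires knowing that $\alFSyn$ (equivalently the presheaf of cores of almost finite syntomic algebras) is already a sheaf for the finite syntomic topology and commutes with the relevant filtered colimits of affines, which is the analogue of the sheaf/finite-presentation property for $\mathrm{FSyn}$ established in~\cite{ModMGL}; once that is in place, the Kan-extension comparison and the passage through cores are formal. I would therefore spend the bulk of the argument justifying that $\Spec A$ and $\Spec A_n$ may be taken $\kappa$-small and that $\alFSyn^\kappa(\Spec A) \simeq \alFSyn(\Spec A)$, and then conclude quickly via Proposition~\ref{lim-FSyn}.
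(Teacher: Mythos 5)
Your proposal is correct and follows essentially the same route as the paper: reduce the statement about $\alFSyn^\kappa$ to one about $\alFSyn$ on (small) representables, identify $\alFSyn$ with the core of $\Alg^{\rm alFSyn}_{(-)}$ (the paper writes this as $\Map_{\mathrm{Cat}}(\Delta^0,\,-)$), use that taking cores commutes with $2$-limits, and conclude from Proposition~\ref{lim-FSyn}. The only difference is cosmetic: the paper dispatches the Kan-extension/smallness issue with the single sentence ``we may assume that the functor $\Spec A$ is small,'' where you elaborate on why that reduction is legitimate.
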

\begin{proof}
We may assume that the functor $\Spec{A}$ is small. By Proposition~\ref{lim-FSyn}, one has a chain of weak equivalences:
\begin{multline*}
  \alFSyn(\Spec A) \simeq \Map_{\mathrm{Cat}}(\Delta^0,\, \alFSyn_A) \simeq   \Map_\mathrm{Cat}(\Delta^0,\, 2-\varprojlim \alFSyn_{A_n}) \\ \simeq  \varprojlim \Map_\mathrm{Cat}(\Delta^0,\, \alFSyn_{A_n}) \simeq  \varprojlim \alFSyn(\Spec A_n), 
\end{multline*}
where $\Cat{}$ denotes the large category of small categories that
mapping spaces are the largest groupoids of the functor
categories. Therefore, one has an equivalences: $\alFSyn^\kappa(\Spec A)
\simeq \varprojlim \alFSyn^\kappa(\Spec A_n) \simeq \alFSyn^\kappa(\varinjlim \Spec A_n)$.  \qed
\end{proof}

\begin{remark}
Proposition~\ref{lim-FSyn} and its corollary holds under the only the assumption that $\varprojlim: 2-\varprojlim \APMod_{A_n} \to \APMod_A $ is a categorical
equivalence. 
\end{remark}

Let $V$ be a commutative ring and $\omega \in V$ a non-zero divisor. For
$n \ge 0 $, set $A_n= A / \omega^{n+1} A$. Write $\overline{A_n}=
\omega^n A / \omega^{n+1} A$. Since $ \overline{A_n}$ is square-zero
non-unital ring, the $A$-linear inclusion: $ \overline{A_n} \to A_n$ is
a ring homomorphism as non-unital rings, inducing a ring homomorphism:
\begin{equation}
\label{n!!}  
  \varphi_n: (\overline{A_n})_{!!} \to  (A_{n})_{!!} 
\end{equation} 
Hence $(A_{n})_{!!}$ has an $(\overline{A_n})_{!!} $-algebra structure. 

By the canonical isomorphism $ \overline{A_1} \to  \overline{A_n}$, the following holds. 
\begin{lemma}
\label{n-to-1} The linear map $(\omega^{n-1} \cdot \mathrm{Id})_{!!}:
(A_{n})_{!!} \to (A_{n})_{!!}  $ induces an isomorphism:
$(\overline{A_1})_{!!} \to (\overline{A_n})_{!!}$ of $A_{!!}$-algebras.
\qed
\end{lemma}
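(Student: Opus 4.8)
The strategy is to realise the isomorphism asserted in the lemma as the image under the functor $(-)_{!!}$ of the elementary isomorphism $c_n\colon \overline{A_1}\to\overline{A_n}$ given by multiplication by $\omega^{n-1}$, and then to use the naturality of $(-)_{!!}$ with respect to the structure maps $(\ref{n!!})$ to identify it with $(\omega^{n-1}\cdot\mathrm{Id})_{!!}$ restricted to $(\overline{A_1})_{!!}$.

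First I would check, purely at the level of $A$-modules, that multiplication by $\omega^{n-1}$ on $A$ carries $\omega A$ into $\omega^{n}A$ and $\omega^{2}A$ into $\omega^{n+1}A$, so that it descends to an $A$-linear map $c_n\colon \overline{A_1}=\omega A/\omega^{2}A\to\omega^{n}A/\omega^{n+1}A=\overline{A_n}$; this is exactly the canonical isomorphism already invoked just before the lemma. Surjectivity of $c_n$ is immediate from $\omega^{n}a=\omega^{n-1}(\omega a)$, and injectivity follows since $\omega$, hence $\omega^{n}$, is a non-zero divisor (which holds, e.g., whenever $A$ is $\omega$-torsion free): if $\omega^{n}a\in\omega^{n+1}A$, then $\omega^{n}(a-\omega b)=0$ for some $b$, forcing $a\in\omega A$ and $\omega a\in\omega^{2}A$. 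Because $n\ge 1$, both $\overline{A_1}$ and $\overline{A_n}$ are square-zero non-unital rings, so any $A$-linear map between them automatically preserves the (trivial) multiplication; in particular $c_n$ is an isomorphism of non-unital $A$-algebras.

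Applying $(-)_{!!}$, which is a functor from non-unital $A$-algebras to $A_{!!}$-algebras and therefore preserves isomorphisms, yields an isomorphism $(c_n)_{!!}\colon(\overline{A_1})_{!!}\to(\overline{A_n})_{!!}$ of $A_{!!}$-algebras. To match this with the formulation of the lemma I would finally note that the square
\[
\begin{array}{ccc}
\overline{A_1} & \hookrightarrow & A_1 \\
\downarrow & & \downarrow \\
\overline{A_n} & \hookrightarrow & A_n
\end{array}
\]
of non-unital $A$-modules (horizontal arrows the natural inclusions, left vertical arrow $c_n$, right vertical arrow multiplication by $\omega^{n-1}$) commutes, since both composites send the class of $\omega a$ to the class of $\omega^{n}a$, and that $\omega^{n-1}\cdot\mathrm{Id}\colon A_n\to A_n$ factors through the reduction $A_n\twoheadrightarrow A_1$. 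Applying $(-)_{!!}$ to this data and using $(\ref{n!!})$ then exhibits $(c_n)_{!!}$ as the map induced on $(\overline{A_1})_{!!}$ by $(\omega^{n-1}\cdot\mathrm{Id})_{!!}$, compatibly with $\varphi_1$ and $\varphi_n$. I do not expect any genuine obstacle: the lemma is formal once $(-)_{!!}$ is known to be functorial, and the only points deserving a line of verification are that $\omega^{n}$ acts as a non-zero divisor on $A$ (used for injectivity of $c_n$) and that $(-)_{!!}$ is $A_{!!}$-linear and natural, so that $(\omega^{n-1}\cdot\mathrm{Id})_{!!}$ and its restriction are well defined and compatible with $(\ref{n!!})$.
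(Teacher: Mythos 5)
Your argument is correct and is exactly the route the paper intends: the paper offers no written proof beyond the remark that the canonical isomorphism $\overline{A_1}\to\overline{A_n}$ (multiplication by $\omega^{n-1}$) exists, and your proposal simply spells out that this map is a bijection of square-zero non-unital rings and then applies functoriality of $(-)_{!!}$. You also rightly flag the one hypothesis the paper leaves implicit, namely that $A$ must be $\omega$-torsion free for injectivity of $c_n$; this is satisfied in the intended application, where $A$ is an integral perfectoid (hence flat) $V$-algebra.
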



\subsection{The almost algebraic cobordism of perfectoid algebras}
In this section, we consider the case that $\mathfrak{m}$ is a flat
$V$-module, entailing the canonical map $\mu_{\mathfrak{m}}: \tm \to
\mathfrak{m}$ is bijective. 
We recall the definition of perfectoid algebra. 
\begin{definition}
Let $K$ be a complete non-Archimedian non-discrete valuation field of
rank $1$, and $K^\circ$ denote the subring of powerbounded elements.  We
say that $K$ is a {\it perfectoid field} if the Frobenius $\Phi:
K^\circ/ p \to K^\circ /p $ is surjective, where $p$ is a positive prime
integer which is equal to the characteristic of the residue field of
$K^\circ$.
\end{definition} 

In this section, we fix a perfectoid field $K$ whose valuation ring
$K^\circ$ is mixed characteristic $(0,\,p)$. We put $V=K^\circ$ and
$\mathfrak{m}=K^{\circ \circ}$, where $K^{\circ \circ}= \{x \in K \mid
|x| < 1 \}$ is the maximal ideal of $K^\circ$. Then $\mathfrak{m}$ is
the set of topologically nilpotent elements, being an idempotent
ideal. We fix a pseudouniformizer $\omega \in V$ with $|p| \le |\omega|
< 1$. In this case, the ideal $\mathfrak{m}= \varinjlim_{n \ge 1}
\omega^{1/p^n}V$, which is a filtered colimit of free $V$-modules, is
flat by Lazard's theorem.

\begin{definition} 
An integral perfectoid $V$-algebra is an $\omega$-adic complete flat
$V$-algebra $A$ on which Frobenius induces an isomorphism $\Phi: A/
\omega^{\frac{1}{p}}A \to A / \omega A $.
\end{definition}

For any $V$-algebra $B$,let $B^\flat$ denote the tilting algebra $\varprojlim_{ x
\mapsto x^p} B/ \omega B$ of $B$. The tilting ideal $\mathfrak{m}^\flat \subset V^\flat $ is a flat
$V^\flat$-module as $\mathfrak{m}$ is. Therefore, we can apply almost
mathematics to perfectoid algebras.

\begin{proposition}
\label{n-AFSyn} Let $K$ be a perfectoid field with the valuation ring
 $V$ whose residue field is of characteristic $p>0$. Let $\omega$ be a
 pseudouniformizer and $\mathfrak{m}=\varinjlim_{m \ge 1}
 \omega^{p^{-m}} V$. Let $A$ be an integral perfectoid $V$-algebra and
 write $A_n=A /\omega^n A$. Then $(A_{n})_{!!}$ is an ind-finite
 syntomic $(\overline{A_1})_{!!}$-algebra for $n \ge 1$.
\end{proposition}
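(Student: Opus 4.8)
The plan is to realise $(A_{n})_{!!}$ as a filtered colimit of finite syntomic $(\overline{A_1})_{!!}$-algebras, the filtration coming from the presentation $\mathfrak m=\varinjlim_{m}\omega^{p^{-m}}V$ of the defining ideal and the finite syntomicity at each stage coming from the perfectoid Frobenius together with the flatness of $A$ over $V$. First I would make two reductions. Using Lemma~\ref{n-to-1} and the flatness of $A$ over $V$, I would identify $(\overline{A_1})_{!!}$ with the unitalisation of the socle $\omega^{n-1}A/\omega^{n}A$ of $A/\omega^{n}A$, so that the structure map under consideration is $\varphi_{n}$ as in (\ref{n!!}); for $n=1$ this socle is all of $A/\omega A$ and $\varphi$ is an isomorphism, so assume $n\ge 2$. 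Then, via Theorem~\ref{almost-monoidal} and Theorem~\ref{nomumital-almostFSyn}, ``finite syntomic over $(\overline{A_1})_{!!}$'' may be tested by the two conditions defining an almost finite syntomic morphism, namely almost finite projectivity of the module and almost perfectness of the cotangent complex with Tor-amplitude in $[-1,0]$, and Proposition~\ref{almost-cotangent} lets me compute that cotangent complex on the non-unital side.

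The heart of the argument is the construction of the colimit. Since $\mathfrak m=\varinjlim_{m}\omega^{p^{-m}}V$ is a filtered colimit of free $V$-modules, the functors $\tm\otimes_V(-)$, $\Hom_V(\tm,-)$, and hence $(-)_!$ and $(-)_{!!}$, are all computed as (co)limits over $m$; combining this with the iterated perfectoid Frobenius isomorphisms $\Phi^{m}\colon A/\omega^{p^{-m}}A\xrightarrow{\ \sim\ }A/\omega A$ (and using $|p|\le|\omega|$ to control the Frobenius on the higher truncations $A/\omega^{k}A$), one obtains a cofinal system of finitely presented stage-$m$ approximations whose colimit is $(A_{n})_{!!}$. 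I would then check that each transition map of this system is finite syntomic over the corresponding stage-$m$ approximation of $(\overline{A_1})_{!!}$: the finite-flatness half follows from the $\omega$-adic filtration of $A/\omega^{n}A$, all of whose graded pieces are $\cong A/\omega A$ and which becomes finite free over the relevant subring once $\mathfrak m$ is truncated to $\omega^{p^{-m}}V$; the lci half follows from the fact that $L_{A/V}$ is almost zero for an integral perfectoid $A$ (cf.\ \cite{GR}), whence $L_{(A/\omega^{n}A)/(V/\omega^{n}V)}$ is almost zero and, by the transitivity triangle for $V\to(\overline{A_1})_{!!}\to(A_{n})_{!!}$ together with the control on $L_{(\overline{A_1})_{!!}/V}$ coming from $\overline{A_1}$ being an almost square-zero ideal, $L_{(A_{n})_{!!}/(\overline{A_1})_{!!}}$ is almost perfect of Tor-amplitude in $[-1,0]$.

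Passing to the colimit over $m$, and invoking the permanence results of Section~\ref{sec-almostDerivedNakayama} (compatibility of $\APMod$ and of cotangent complexes with these limits, as in Theorem~\ref{lim-perfect} and Proposition~\ref{lim-FSyn}), this exhibits $(A_{n})_{!!}$ as a filtered colimit of finite syntomic $(\overline{A_1})_{!!}$-algebras, i.e.\ as an ind-finite syntomic $(\overline{A_1})_{!!}$-algebra, which is the assertion.

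The main obstacle is the bookkeeping in the second paragraph: the rings $A/\omega^{n}A$ are not firm, so $(-)_{!!}$ — equivalently $\Hom_V(\tm,-)$ followed by the square-zero-type modification $V\oplus_{\tm}(-)$ — genuinely alters them, and one must verify that finite flatness and the $[-1,0]$ Tor-amplitude of the cotangent complex survive both $(-)_!$ and the passage to $V\oplus_{\tm}(-)$, and that $\varphi_{n}$ is compatible with the Frobenius identifications so that the system over $m$ is truly filtered with finite-syntomic transition maps. In other words, the crux is producing the explicit cofinal finite-syntomic system rather than merely knowing abstractly that $L_{A/V}$ is almost zero; this is exactly the point at which the argument parallels, with the $(-)_{!!}$-formalism inserted, the corresponding computation for non-unital integral perfectoid algebras in \cite{K-MGL}.
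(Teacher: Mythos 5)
Your proposal follows essentially the same route as the paper: both exhibit $(A_n)_{!!}$ as a filtered colimit over $m$ of unitalized quotients indexed by the truncations $\omega^{p^{-m}}V$ of $\mathfrak m$, verify finite syntomicity of each stage map $\varphi_{n,m}$, and conclude using that $(-)_{!!}$ preserves small colimits together with Lemma~\ref{n-to-1}. If anything, you supply more justification at the one point the paper merely asserts (why each $\varphi_{n,m}$ is finite syntomic, via the $\omega$-adic filtration for flatness and the almost vanishing of $L_{A/V}$ for the lci condition), so the proposal is fine.
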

\begin{proof}
We may assume $A_*=A$ and $A_!= \mathfrak{m} \otimes_V A$.  
The injection $ (\overline{A_n})_{!} \to (A_{n})_{!}$ is induced by the
inductive system of (non-unital) ring homomorohisms:
\[
      \omega^{n+ \frac{1}{p^m}}A/  \omega^{n+1+ \frac{1}{p^n}}A \to   \omega^{\frac{1}{p^n}}A/  \omega^{n+1+ \frac{1}{p^m}}A,   
\]
which induces a finite syntomic homomorphism 
\[
 \varphi_{n,\,m}: V \oplus (\omega^{\frac{1}{p^m}})^{n p^m+1} A/  \omega^{n+1+ \frac{1}{p^n}}A \to V \oplus 
      \omega^{ \frac{1}{p^m}}A/  \omega^{n+1+ \frac{1}{p^n}}A 
\] 
for each $m \ge 1$. Since the functor $(-)_{!!}$ preserves all small
colimit, by definition of almost relative cotangent complex,
$(A_{n})_{!!}$ is ind-finite syntomic over $ (\overline{A_1})_{!!}$ by
Lemma~\ref{n-to-1} for each $n \ge 1$. \qed

\end{proof}

\begin{lemma}
\label{lemmaA}
On the condition of Proposition~\ref{n-AFSyn}, let $A_n^+$ denote the
cokernel of
\[
   \mathfrak{m}/ \omega^n \mathfrak{m} \to
 V/\omega^n \oplus (\mathfrak{m} \otimes_V A/\omega^n A)    
\]
for each $n$. Then there exists a  canonical isomorphism  $(A_n)_{!!} \to A_n^+$  for each $n \ge 1$.
\end{lemma}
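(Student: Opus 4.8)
The plan is to unwind $(A_n)_{!!}$ and match it, summand by summand, with the cokernel presentation of $A_n^+$; the only real inputs are the flatness of $\mathfrak{m}$ (fixed throughout this section) and the right exactness of $\otimes_V$. Recall from Section~\ref{sec:K-theory} (the subsection on the left adjoint of $(-)^a$, together with the Smith-ideal discussion preceding Lemma~\ref{A!!-projective}) that for $A_n = A/\omega^n A$ one has $(A_n)_{!!} = V\oplus_{\tm}(A_n)_!$, realised as the cokernel of the canonical almost-injective map $\tm\to V\oplus (A_n)_!$, $x\mapsto(x,\,x\otimes 1)$, where $(A_n)_! = \tm\otimes_V\Hom_V(\tm,\,A_n)$. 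First I would pass to flat incarnations: since $\mathfrak{m}$ is flat, $\mu_{\mathfrak{m}}:\tm\to\mathfrak{m}$ is an isomorphism (Corollary~\ref{I-tilde}), and by Proposition~\ref{almost} the almost isomorphism $A_n\to\Hom_V(\mathfrak{m},\,A_n)$ becomes an isomorphism after applying the exact functor $\mathfrak{m}\otimes_V(-)$, which inverts almost isomorphisms by Lemma~\ref{I-tensor}; hence $(A_n)_!\simeq\mathfrak{m}\otimes_V A_n = \mathfrak{m}\otimes_V A/\omega^n A$ naturally, and $(A_n)_{!!}$ is the cokernel of $\mathfrak{m}\to V\oplus(\mathfrak{m}\otimes_V A/\omega^n A)$.

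Next I would exploit that $\mathfrak{m}\otimes_V A/\omega^n A$ is annihilated by $\omega^n$ --- for $y\in\mathfrak{m}$, $a\in A$ one has $\omega^n(y\otimes\bar a) = y\otimes\overline{\omega^n a} = 0$ --- so that, by right exactness of $\otimes_V$, the cokernel above is unchanged upon replacing the source by $\mathfrak{m}/\omega^n\mathfrak{m}$ and reducing the unit summand modulo $\omega^n$; conceptually this is just the statement that the left adjoint $\mathfrak{m}\otimes_V(-)$ and the formation of the unitalization pushout commute with the base change $V\to V/\omega^n$. Comparing the resulting presentation with the cokernel of $\mathfrak{m}/\omega^n\mathfrak{m}\to V/\omega^n\oplus(\mathfrak{m}\otimes_V A/\omega^n A)$ that defines $A_n^+$ gives the asserted canonical morphism $(A_n)_{!!}\to A_n^+$, and the remaining point is its bijectivity: here one uses that $\omega\in\mathfrak{m}=K^{\circ\circ}$, so $\omega^n V\subseteq\mathfrak{m}$ and the relations coming from the diagonal $\mathfrak{m}/\omega^n\mathfrak{m}\to V/\omega^n\oplus(\mathfrak{m}\otimes_V A/\omega^n A)$ already absorb all of $\omega^n V$ inside the unit summand.

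I expect this last bijectivity check --- the bookkeeping of which quotient of $V$ survives as the unit object after the successive reductions by $\omega^n\mathfrak{m}$ and by $\omega^n V$, together with the verification that the identifications forced by $\omega^n\in\mathfrak{m}$ are precisely the ones required --- to be the only delicate part; everything upstream is a formal manipulation of cokernels resting on the flatness hypothesis. Once the underlying $V$-module isomorphism is established, it is a ring isomorphism automatically: both $(A_n)_{!!} = V\oplus_{\tm}(\tm\otimes_V A_n)$ and $A_n^+ = (V/\omega^n)\oplus_{\mathfrak{m}/\omega^n\mathfrak{m}}(\mathfrak{m}\otimes_V A_n)$ carry the unitalization-pushout multiplication (Section~\ref{sec:K-theory}, cf.\ Theorem~\ref{almost-monoidal}), and the comparison is induced by the ring homomorphisms $V\to V/\omega^n$ and $\mathfrak{m}\to\mathfrak{m}/\omega^n\mathfrak{m}$ together with the identity on $\mathfrak{m}\otimes_V A_n$, hence is multiplicative; naturality in $A$ is clear from the construction.
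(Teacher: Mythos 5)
Your proof is correct and follows essentially the same route as the paper, which presents the comparison as a pasting of coCartesian squares
\[
  \xymatrix@1{
\mathfrak{m} \ar[d] \ar[r] &\mathfrak{m} /\omega^n \mathfrak{m} \ar[r] \ar[d] & \mathfrak{m} \otimes_V A_n \ar[d] \\
V \ar[r] &  V /\omega^n  \ar[r] &  A^+_n
 }
\]
rather than as an explicit cokernel manipulation. Your version has the merit of isolating the actual crux --- that $\omega^nV\subseteq\mathfrak{m}$ so the relations $(\omega^n v,\,-\iota(\omega^n v))=(\omega^n v,\,0)$ already kill $\omega^nV$ in the unit summand, while $\omega^n$ annihilates $\mathfrak{m}\otimes_V A_n$ --- which is exactly what makes the outer rectangle a pushout and which the paper leaves implicit.
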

\begin{proof}
Consider the following commutative diagram:
\[
  \xymatrix@1{
\mathfrak{m} \ar[d] \ar[r] &\mathfrak{m} /\omega^n \mathfrak{m} \ar[r] \ar[d] & \mathfrak{m} \otimes_V A_n \ar[d] \\
V \ar[r] &  V /\omega^n  \ar[r] &  A^+_n,  
 }
\]
where both squares are coCartesian. Therefore the morphism $ (A_n)_{!!} \to A^+_n $ is canonically an isomorphism. \qed  
%
\end{proof}

Set $\alFSyn^\flat(A)=\alFSyn(A^\flat)$ for any perfectoid algebra $A$, and write $\AMGL^\flat=\Sigma_+^\infty \alFSyn^\flat$. 
Finally, we prove the tilting equivalence of almost algebraic
cobordism:
\begin{theorem}
\label{limit-1} Let $V$ be a mixed characteristic perfectoid valuation
ring with unit and $V^\flat$ denote the tilting. Then these spectra
$\AMGL$ and $\AMGL^\flat$ are equivalent on zero-section stable affine
integral perfectoid schemes over $V$. 
\end{theorem}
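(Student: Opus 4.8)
The plan is to fix an affine test object $X=\Spec A$ with $A$ an integral perfectoid $V$-algebra and $\Spec A$ zero-section stable, and to produce a natural equivalence $\AMGL(\Spec A)\simeq\AMGL^\flat(\Spec A)$ by reducing both sides to data attached to the common reduction $A/\omega A$. Since $A$ is $\omega$-adically complete, $A=\varprojlim_n A/\omega^n A$, and applying Corollary~\ref{lim-AFSyn} to the inverse system $(A/\omega^n A)_n$ gives a weak equivalence
\[
\alFSyn^\kappa(\Spec A)\ \xrightarrow{\ \sim\ }\ \varprojlim_n\alFSyn^\kappa(\Spec A/\omega^n A).
\]
On the tilted side, $A^\flat=\varprojlim(\cdots\xrightarrow{\Phi}A/\omega A\xrightarrow{\Phi}A/\omega A)$, so the same corollary, applied now to the constant system $A/\omega A$ with Frobenius transition maps, gives $\alFSyn^\kappa(\Spec A^\flat)\simeq\varprojlim_{\Phi}\alFSyn^\kappa(\Spec A/\omega A)$. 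After applying $\Sigma_+^\infty$ and passing to $\proj^1_+$-spectra in $\MSp$, the theorem reduces to matching the homotopy limit over the $\omega$-tower with the homotopy limit over the Frobenius tower, as pro-objects governing zero-section motivic spectra.

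First I would unwind each level of the $\omega$-tower via the non-unital picture. By Theorem~\ref{nomumital-almostFSyn} the category $\Alg^{\rm alFSyn}_{A/\omega^n A}$ is equivalent to $\Alg^{\rm FSyn}_{\tm\otimes_V A/\omega^n A}$, by Lemma~\ref{lemmaA} the unitalization $(A/\omega^n A)_{!!}$ is identified with $A_n^+$, and Proposition~\ref{n-AFSyn} exhibits $(A/\omega^n A)_{!!}$ as an ind-finite syntomic algebra over $(\overline{A_1})_{!!}$. Lemma~\ref{n-to-1} then identifies every $(\overline{A_n})_{!!}$ with the single ring $(\overline{A_1})_{!!}$ via multiplication by $\omega^{n-1}$, where $\overline{A_1}=\omega A/\omega^2 A\cong A/\omega A$ since $\omega$ is a non-zero-divisor on $A$. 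Thus each level of the $\omega$-tower is governed by ind-finite syntomic data over the fixed base $(\overline{A_1})_{!!}$, and---exactly as in the non-unital computation of \cite{K-MGL}---the perfectoid Frobenius isomorphisms $\Phi:A/\omega^{1/p}A\xrightarrow{\sim}A/\omega A$ identify the reduction transition maps of the tower, after the rescaling of Lemma~\ref{n-to-1}, with iterates of $\Phi$ acting on $A/\omega A$. Since $V/\omega V\cong V^\flat/\omega^\flat V^\flat$ and, under this isomorphism, $\mathfrak{m}/\omega\mathfrak{m}\cong\mathfrak{m}^\flat/\omega^\flat\mathfrak{m}^\flat$ (both being the common reduction of $\varinjlim_m\omega^{1/p^m}V$ and $\varinjlim_m(\omega^\flat)^{1/p^m}V^\flat$, using flatness of $\mathfrak{m}$ and $\mathfrak{m}^\flat$), and since perfectoid tilting gives a ring isomorphism $A/\omega A\cong A^\flat/\omega^\flat A^\flat$ compatible with these identifications and with $\Phi$, we obtain an equivalence of almost algebras $(\overline{A_1})_{!!}\cong(A/\omega A)_{!!}\cong(A^\flat/\omega^\flat A^\flat)_{!!}$ intertwining all of the relevant structure. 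Consequently the two towers of $\alFSyn^\kappa$-spaces become equivalent as pro-objects over a common Frobenius tower.

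Finally I would feed this into the zero-section stabilization to conclude. For $n\ge2$ the ring $A/\omega^n A$ is of mixed characteristic while $A^\flat/(\omega^\flat)^n A^\flat$ is of characteristic $p$, so there is no levelwise isomorphism of test rings; what makes the two sides produce the same motivic spectrum is that, restricted to the full subcategory $\MSp^0$ of zero-section motivic spectra, the zero-section stabilization $\mathrm{Z}_0(-)=\varinjlim_n(\pi_*\circ i_*)^n(-)$ collapses precisely the unit/zero-section discrepancy between the $A_n^+$ for varying $n$, so that the relevant homotopy limit over the $\omega$-tower is computed by the Frobenius colimit on $A/\omega A$, whose target is $\Spec A^\flat$ by definition of tilting. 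Combining this with Corollary~3.9 of \cite{K-MGL}, which identifies $L_{\rm FSyn}(\MSp)$ with $\MSp^{\rm FSyn}$, and with the hypothesis that $\Spec A$ is zero-section stable, yields a natural equivalence $\AMGL(\Spec A)\simeq\AMGL^\flat(\Spec A)$ for all such $A$, hence the equivalence of $\AMGL$ and $\AMGL^\flat$ on zero-section stable affine integral perfectoid schemes over $V$. I expect the main obstacle to be exactly this last step: rigorously controlling the homotopy limit over the mixed-characteristic $\omega$-adic tower and proving that, after zero-section stabilization, it depends only on the reduction modulo $\omega$; the ind-finite syntomic structure over the tilting-invariant base $(\overline{A_1})_{!!}$ together with Lemma~\ref{n-to-1} is what should reduce this to the analysis already carried out in \cite{K-MGL} for the non-unital case. \qed
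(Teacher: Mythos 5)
Your overall strategy is the one the paper actually uses: reduce both sides to the common reduction modulo $\omega$ via the tower $A_n=A/\omega^n A$, invoke Proposition~\ref{n-AFSyn} and Lemma~\ref{n-to-1} to exhibit each $(A_n)_{!!}$ as ind-finite syntomic over the fixed base $(\overline{A_1})_{!!}$, identify $(A_1)_{!!}$ with $(A_1^\flat)_{!!}$ through the zig-zag of Lemma~\ref{lemmaA}, and let the zero-section stabilization together with the FSyn-localization collapse the towers. Two points in your write-up, however, do not go through as stated.

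First, on the tilted side you apply Corollary~\ref{lim-AFSyn} to the presentation $A^\flat=\varprojlim_\Phi A/\omega A$, i.e.\ to the constant system with Frobenius transition maps. That corollary (via Proposition~\ref{lim-FSyn} and Theorem~\ref{lim-perfect}) is only available for an $I$-adic tower $A/I^n$ with $I$ a tight ideal contained in the Jacobson radical; the Frobenius tower is not of this form, so the step $\alFSyn^\kappa(\Spec A^\flat)\simeq\varprojlim_\Phi\alFSyn^\kappa(\Spec A/\omega A)$ is unjustified. The paper avoids this by treating the tilted side symmetrically, using the $\omega^\flat$-adic tower $A^\flat_n=A^\flat/(\omega^\flat)^nA^\flat$, to which the corollary does apply; the tilting isomorphism then only needs to be used at level $1$, where Lemma~\ref{lemmaA} supplies the zig-zag $(A_1)_{!!}\to(A_1^+)_{!!}\leftarrow(A_1^\flat)^+_{!!}\leftarrow(A_1^\flat)_{!!}$.

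Second, your concluding step --- that the homotopy limit over the $\omega$-tower ``is computed by the Frobenius colimit on $A/\omega A$, whose target is $\Spec A^\flat$'' --- conflates a limit with a colimit and leaves the actual mechanism unexplained. The paper's point is simpler and sharper: by Proposition~\ref{n-AFSyn} each transition in the tower is ind-finite syntomic over $(\overline{A_1})_{!!}$, hence after zero-section stabilization every term $\AMGL(\mathrm{Z}_0(A_n))$ is equivalent to the single spectrum $\AMGL(A_1)$ (and likewise $\AMGL(\mathrm{Z}_0(A_n^\flat))\simeq\AMGL(A_1^\flat)$), so both $\mathbb{R}\varprojlim$'s are constant towers and the comparison reduces entirely to the level-$1$ isomorphism. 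You should also make explicit, as the paper does, the cofibrancy of the inductive system $(\Spec A_n)$ and the commutation of $\mathrm{Z}_0$ with the colimit granted by the zero-section stability hypothesis, since these are what let you pass from the tower computation back to $\Spec A$ itself.
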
 
\begin{proof}
By Lemma~\ref{lemmaA}, there exists a zig-zag of isomorphisms 
\[
 (A_1)_{!!} \rightarrow  (A^+_1)_{!!}   \leftarrow (A^\flat_1)^+_{!!}  \leftarrow (A_1^\flat)_{!!}
\]
of commutative rings, where the middle map is a canonical isomorphism.
 By Proposition~\ref{n-AFSyn}, for each $n \ge 1$, $\AMGL(\mathrm{Z}_0
 (A_n) )$ (resp. $\AMGL(\mathrm{Z}_0(A^\flat_n) )$) is weakly equivalent
 to the same $ \AMGL(A_1 )$ (resp. $ \AMGL(A^\flat_1)$), implying that
 $\mathbb{R}\varprojlim \AMGL(\mathrm{Z}_0( A_n) )$ and
 $\mathbb{R}\varprojlim \AMGL(\mathrm{Z}_0( A^\flat_n ))$ are weakly
 equivalent, where $\mathrm{Z}_0$ denotes the zero-section
 stabilization. Note that the zero-section stabilization preserves small
 colimits. The zero-section stability of $ \varinjlim \Spec A_n $
 implies that one has a chain of equivalences $ \mathbb{L} \varinjlim \Spec A_n \simeq \mathrm{Z}_0  (\mathbb{L} \varinjlim \Spec A_n) \simeq \mathbb{L} \varinjlim \mathrm{Z}_0 ( \Spec A_n)$.  Similarly, one has  $ \mathbb{L} \varinjlim \Spec A^\flat_n \simeq \mathbb{L} \varinjlim \mathrm{Z}_0 ( \Spec A^\flat_n)$.  
Consider the diagram of fiber sequences:
 \[ \xymatrix@1{ \AMGL(\mathbb{L} \varinjlim \Spec A_n ) \ar[r]\ar[d] &
 \AMGL( \oplus_n \mathrm{Z}_0(\Spec A_n) ) \ar[d]
 \ar[r]<0.5mm>^{\mathrm{Id}} \ar[r]<-0.5mm>_{ (\oplus t_n)^* } & \AMGL(
 \oplus_n \mathrm{Z}_0(\Spec A_n ) ) \ar[d] \\ \mathbb{R}\varprojlim
 \AMGL( \mathrm{Z}_0 (A_n) ) \ar[r] & \prod_n \AMGL(\mathrm{Z}_0(A_n) )
 \ar[r]<0.5mm>^{\mathrm{Id}} \ar[r]<-0.5mm>_{\prod t_n } & \prod_n
 \AMGL(\mathrm{Z}_0(A_n) ), }
\]
where the middle and right vertical arrows are weak equivalences.  Since
each $\Spec{A_n}$ is cofibrant and closed immersion $t^*_n:\Spec{A_n}
\to \Spec{A_{n+1}} $ a cofibration with respect to the motivic model
structure, the inductive system $( \Spec{A_n})_{n \ge 1}$ is injectively
cofibrant. Therefore $\varinjlim \Spec{A_n} \to \mathbb{L} \varinjlim
\Spec A_n$ is a motivic equivalence. Furthermore, by
Proposition~\ref{lim-FSyn}, one has weak equivalences of spectra
$\alFSyn^\kappa( \Spec A) \simeq \alFSyn^\kappa( \varinjlim \Spec A_n)$ and
$\alFSyn^\kappa(\Spec A^\flat) \simeq \alFSyn^\kappa( \varinjlim \Spec
A^\flat_n)$. Hence $\AMGL  $ and $\AMGL^\flat  $ are weakly equivalent on the category of integral perfectoid $V$-algebras. \qed
\end{proof}

%
%
%
%
\newcommand{\etalchar}[1]{$^{#1}$}

\bibliographystyle{alphadin}

\nocite{MR2276611}
\nocite{MR2034012}
\nocite{DI}
%
\nocite{DK,DK3,DK2}
%
\nocite{MR4092773}
%
%
%
%
\nocite{GeSn}
%

\nocite{GJ}
\nocite{Hu}
%
\nocite{Jardine, Jardine2}
\nocite{Joyal}
%
%
%
\nocite{HT,HA} 
%
%
\nocite{zbMATH05115214}
%
%
\nocite{zbMATH05550622}
\nocite{MacLane-Moerdijk}
%
%
%
\nocite{MV}
\nocite{Nuiten}
%
%
\nocite{SpO}
\nocite{PPR}
%
\nocite{QuillenHomotopy}
\nocite{Quillen}
%
%
%
%
%
%
\nocite{SSpec}
\nocite{stacks-project}
%
%
%
%
\nocite{VoeH}
\nocite{V}
\nocite{W1}
\nocite{weibel2}
\end{document}